\let\cal\mathcal
\newtheorem{theorem}{Theorem}
\newtheorem{lemma}[theorem]{Lemma}
\newtheorem{proposition}[theorem]{Proposition}
\theoremstyle{remark}
\newtheorem{remark}[theorem]{Remark}
\theoremstyle{definition}
\newtheorem{definition}[theorem]{Definition}
\newtheorem{problem}[theorem]{Problem}
\theoremstyle{remark}
\numberwithin{equation}{section}
\numberwithin{theorem}{section}
\def\M{\cal{M}}
\def\H{\cal{H}}
\def\ch{\raise 0.5ex \hbox{$\chi$}}
\def\T{\tau}
\def\E{\cal{E}}
\let\epsilon\varepsilon
\def\log{\operatorname{log}}
\renewcommand{\a}{\alpha}
\renewcommand{\b}{\beta}
\newcommand{\g}{\gamma}
\renewcommand{\i}{{\rm i}}
\newcommand{\N}{\cal{N}}
\newcommand{\h}{\mathsf{h}}
\newcommand{\bmo}{\mathsf{bmo}}
\newcommand{\BMO}{{\mathcal {BMO}}}
\begin{document}

\title[Interpolation]{P.~Jones'  interpolation  theorem for noncommutative martingale Hardy spaces II}

\author[Randrianantoanina]{Narcisse Randrianantoanina}
\address{Department of Mathematics, Miami University, Oxford,
Ohio 45056, USA}
 \email{randrin@miamioh.edu}


\subjclass{Primary: 46L52,  46L53, 46B70.  Secondary: 46E30, 60G42,  60G48}
\keywords{Noncommutative martingale; Hardy spaces; real interpolation}

\begin{abstract}
Let $\M$ be a semifinite von Neumann algebra equipped with an increasing filtration $(\M_n)_{n\geq 1}$  of (semifinite) von Neumann subalgebras  of $\M$.  For
  $1\leq p \leq\infty$,  let  $\H_p^c(\M)$ denote the  noncommutative  column    martingale Hardy space  constructed from column square functions  associated with    the  filtration $(\M_n)_{n\geq 1}$ and the index $p$.
We prove the following  real interpolation identity: if $0<\theta<1$ and $1/p=1-\theta$, then
   \[
   \big(\H_1^c(\M), \H_\infty^c(\M)\big)_{\theta,p}=\H_p^c(\M).
   \]
   This is new even for classical martingale Hardy spaces as it is previously known only under the assumption that the filtration is regular.
We also obtain   analogous result  for  noncommutative  column martingale Orlicz-Hardy spaces.
\end{abstract}

\maketitle

\section{Introduction}

This article  is a continuation of our previous work \cite{Ran-Int2} on  the investigation
of  interpolation spaces of compatible couples of various types of noncommutative martingale Hardy spaces and related spaces. The  study of interpolations of classical martingale Hardy spaces  has  a long history   which we refer to \cite{Jason-Jones,Weisz3, Weisz, Xu-inter2}. Recall that  the first  class  of noncommutative  martingale Hardy spaces were introduced by  Pisier and Xu in the seminal  paper \cite{PX}.  These  Hardy spaces were constructed from  column/row  square functions  and  the column versions are  commonly denoted by $\H_p^c$ for $0<p\leq\infty$. Later, another class of noncommutative martingale Hardy spaces
were considered  by Junge and Xu  in \cite{JX} which are based on conditioned column/row square functions. The column versions of these second  type of    Hardy space  are  denoted by $\h_p^c$  for $0<p\leq \infty$. Both classes of Hardy spaces are instrumental  in  the development of noncommutative martingale theory the last two decades.
The topic of interpolations of these classes of Hardy spaces  turns out to be at the forefront of these developments.

Let us recall  some background  on interpolations of noncommutative martingale Hardy spaces. We refer to \cite{JX,PX} for the classes of noncommutative  BMO-spaces discussed below. 
The study of interpolations of noncommutative martingale Hardy spaces was  initiated by Musat in \cite{Musat-inter} 
where the complex interpolation of the compatible  couple $(\H_1^c, \BMO^c)$ was given. Later, Bekjan  {\it et al.} established in \cite{Bekjan-Chen-Perrin-Y} that  the analogue of  Musat's result is valid for  couple of  column/row conditioned Hardy spaces. More precisely,  they   obtained  the corresponding result for the compatible couple $(\h_1^c, \bmo^c)$. A common theme in  these two earlier  articles   is that  all couples considered have one of the endpoints consisting of  appropriate types of noncommutative  martingale $BMO$-spaces. We should also mention the articles \cite{Bekjan-Int, Bekjan-Chen-R-S} for related interpolation results in this general direction.

 In the recent article \cite{Ran-Int2}, we 
obtained a Peter Jones  type result for real interpolations of the  couple $(\h_p^c, \h_\infty^c)$ for  $0<p <\infty$. We refer to  \cite{BENSHA, Jones1, Jones2, Pisier-int} for background concerning Peter Jones's interpolation result. It is a natural question if 
the results from \cite{Ran-Int2}  remains valid for other classes of Hardy spaces.

\smallskip

The primary objective of the present article  is to investigate the compatible couple  $(\H_1^c,  \H_\infty^c)$. Curiously,  general interpolation  results  for this  type of couples of  Hardy spaces  from classical martingale theory do not appear to be available in the literature. In fact, all  existing results  to date in this direction  require that  either the filtration  of $\sigma$-algebras involved  is regular or  the  Hardy spaces considered  were  of special type such as those restricted to martingales whose sequences of  quadratic variations are
dominated by predictable sequence of random variables.  Our findings  remedy this situation for both commutative and noncommutative settings. More precisely, we establish   a general result which states  that as in the conditioned case, the family  of Hardy spaces $\{\H_p^c\}_{1\leq p\leq \infty}$ forms a real interpolation scale. In other words, we obtain that   if $0<\theta<1$ and $1/p=1-\theta$, then (with equivalent norms)
   \begin{equation}\label{big-h}
   \big(\H_1^c, \H_\infty^c\big)_{\theta,p}=\H_p^c
   \end{equation}
   where $(\cdot, \cdot)_{\theta,q}$ denotes the real interpolation method. 
We should point out  that,  as is already known from the classical case, the identity \eqref{big-h} does not extend to the compatible couple  $(\H_p^c, \H_\infty^c)$ for $0<p<1$. This is in strong contrast with the conditioned case from \cite{Ran-Int2} and thus highlighting  
that the two situations can be quite different. However, our method  of proof is  based on insights  from techniques used in \cite{Ran-Int2}. Indeed, the decisive step in  our  argument  is an 
 estimate from above of the $K$-functional of  the couple $(\H_2^c, \H_\infty^c)$.
We refer  to Theorem~\ref{main} below for details. The main feature of the estimate is  our  use of the dual Ces\`aro operator.   Although this particular estimate does not imply that  the couple $(\H_2^c, \H_\infty^c)$ is $K$-closed in some appropriate  noncommutative couple $(L_2, L_\infty)$ as in the case of  conditioned spaces treated  in  \cite{Ran-Int2},  it is sufficient to deduce  real interpolations of the couple  $(\H_2^c, \H_\infty^c)$.  The  case of the couple $(\H_1^c, \H_\infty^c)$ is then deduced from the couple $(\H_2^c,\H_\infty^c)$ via standard use of  Wolff's interpolation theorem.

\smallskip

Motivated by  the recent  interest on   martingale Hardy spaces associated with Orlicz  function spaces in the classical setting, we  also consider the case of noncomutative Orlicz-Hardy spaces.
We refer to  recent articles  \cite{L-T-Zhou, L-Weisz-Xie} for  more perspective   and background on real interpolations of  compatible couples involving  martingale Orlicz-Hardy spaces in the classical setting. We show as an  extension of our  techniques 
that \eqref{big-h} remains valid   for noncommutative martingale  Hardy spaces associated with Orlicz functions satisfying some natural conditions.

\smallskip

The paper is organized as follows. In Section~2, we give a brief introduction  of noncommutative  spaces and  review the construction of  noncommutative martingale Hardy spaces associated with symmetric  spaces of measurable operators.  This  section also contains  relevant discussions on  some concepts from interpolation  theory. More specifically,  the real interpolation method is discussed. It also includes some background concerning  the Ces\`aro operator and its dual operator that play  key parts  in  the estimate of the $K$-functional discussed above.
Section~3 is where we provide the formulation  and proof  of our primary  result  together with  extensions and applications. The section also contains a  paragraph dealing with noncommutative martingale Hardy spaces  associated with general symmetric spaces. More specifically,  we consider reeal interpolations of couples involving  classes  of noncommutative matingale  Hardy spaces associated with Orlicz function spaces. 


\section{definitions and preliminary results}

Throughout, we use $c_{abs}$ to denote some absolute constant whose value may change from one statement to the next.  We write $A\lesssim B$ if there is some absolute constant $c_{abs}$ such that $A\leq c_{abs} B$.
We say that $A$ is equivalent to $B$  if $A\lesssim B$ and $B\lesssim A$. In this case, we write $A\approx B$. 

\subsection{Generalized singular value functions and noncommutative spaces}\label{svf subsection}

In what follows, $H$ is a separable  Hilbert space and $\M\subseteq \cal{B(}H)$ denotes a semifinite  von Neumann algebra equipped with a faithful normal semifinite trace $\T$. The identity in $\M$ will be denoted by ${\bf 1}$. A closed and densely defined operator $a$ on $H$ is said to be {\it affiliated} with $\mathcal{M}$ if $u^{\ast}au=a$ for each unitary operator $u$ in the commutant $\mathcal{M}'$ of $\mathcal{M}$. An operator $x$ is called {\it $\T$-measurable} if $x$ is affiliated with $\mathcal{M}$ and for every $\varepsilon>0$, there exists a projection $p\in \mathcal{M}$ such that $p(H)\subset {\mathrm{dom}}(x)$ and $\tau({\bf 1}-p)<\varepsilon.$
The set of all  $\tau$-measurable operators will be denoted by $L_0(\mathcal{M})$. Given a self-adjoint operator $x\in L_0(\M)$ and  a Borel set $B\subset\mathbb{R}$, we denote by $\ch_{B}(x)$ its spectral projection. 
  The {\it distribution function} of $x$ is defined by
\[
\lambda_s(x)=\tau\left(\chi_{(s,\infty)}(x)\right), \quad  s \in \mathbb{R}.
\]
For $x\in L_0(\M)$, the {\it generalized singular value function} of $x$ is defined by
\[
\mu_t(x)=\inf\left\{s>0: \lambda_s(|x|)\leq t\right\}, \quad t>0.
\]

The function $t\mapsto\mu_t(x)$ is decreasing and right-continuous. In the case that $\M$ is the abelian von Neumann algebra $L_{\infty}(0,\alpha)$ ($0<\alpha\leq \infty$) with the trace given by  the integration with respect to the Lebesgue measure,  $L_0(\mathcal{M})$ is the space of all measurable functions, with non-trivial distribution, and $\mu(f)$ is the decreasing rearrangement of the  measurable function $f$ (see  \cite{KPS}). In the abelian case, we write $L_0(0,\alpha)$ instead of
$L_0(L_\infty(0,\alpha))$ ($0<\alpha\leq \infty$). For more discussions on generalized singular value functions, we refer the reader to
\cite{FK}.

Let $0<\alpha\leq \infty$. A  (quasi) Banach  function space $(E,\|\cdot\|_E)$ on the interval $(0,\alpha)$  is called {\it symmetric} if for every $g\in E$ and for every measurable function $f$ with $\mu(f)\leq\mu(g)$, we have $f\in E$ and $\|f\|_E\leq\|g\|_E$.

Given  a symmetric quasi-Banach function space $E$ on $(0,\alpha)$, we  define the  corresponding  noncommutative space of operators  by setting:
\begin{equation*}
E(\M, \T) = \Big\{ x \in
L_0(\M)\ : \ \mu(x) \in E \Big\}. 
\end{equation*}
Equipped with the  quasi-norm
$\|x\|_{E(\M,\T)} := \| \mu(x)\|_E$,   the linear space $E(\M,\T)$ becomes a complex quasi-Banach space (\cite{Kalton-Sukochev,Sukochev-quasi,X}) and is usually referred to as the \emph{noncommutative symmetric space} associated with $(\M,\T)$ corresponding to  $(E, \|\cdot\|_E)$. 
 We remark that  if $0< p<\infty$ and $E=L_p$, then $E(\M, \T)$ is exactly   the usual noncommutative $L_p$-space  $L_p(\M,\T)$ associated with  the pair $(\M,\T)$. 
 In the sequel, $E(\M,\T)$ will be abbreviated to $E(\M)$. 
 
 Beside $L_p$-spaces, we also  make extensive use of  two classes of symmetric spaces. Namely, Lorentz spaces and Orlicz spaces. We begin with the former.

$\bullet$\ {\it Lorentz spaces}:
Let $0<p,q\leq \infty$. The \emph{Lorentz space} $L_{p,q}(0,\infty)$ is the space  of all $f \in L_0(0,\infty)$ for which $\|f\|_{p,q}<\infty$ where
 \begin{equation*}
 \big\|f \big\|_{p,q}  =\begin{cases}
 \left(\displaystyle{\int_{0}^\infty \mu_{t}^{q}(f)\
d(t^{q/p})}\right)^{1/q},  &0< q < \infty; \\
\displaystyle{\sup_{t >0} t^{1/p} \mu_t(f)}, &q= \infty.
\end{cases} 
 \end{equation*}
If $1\leq q\leq p <\infty$ or $p=q=\infty$, then $L_{p,q}(0,\infty)$ is a symmetric Banach function space.  If $1<p<\infty$ and $p\leq q\leq \infty$, then $L_{p,q}(0,\infty)$ can be equivalently   renormed to become a symmetric Banach function space (\cite[Theorem~4.6]{BENSHA}). In general, $L_{p,q}(0,\infty)$ is only a symmetric quasi-Banach function space.

We will also use a different type of Lorentz spaces which we briefly describe. Let $\phi:[0,\infty)\to [0,\infty)$ be an increasing concave continuous function such that $\phi(0)=0$ and $\lim_{t\to \infty}\phi(t)=\infty$.
The Lorentz  space $\Lambda_{\phi}(0,\infty)$ is defined by setting:
\[
\Lambda_{\phi}(0,\infty):=\left\{f\in L_0(0,\infty):\int_0^\infty\mu_s(f)\, d\phi(s)<\infty\right\}
\]
equipped with the norm
\[\|f\|_{\Lambda_{\phi}(0,\infty)}:=\int_0^\infty\mu_s(f )\, d\phi(s).
\]
Clearly, $\Lambda_\phi(0,\infty)$ is a symmetric Banach function space.  In the case $\phi(t)=\log(1+t)$, we use  the notation $\Lambda_{\log}(0,\infty)$ for  $\Lambda_{\phi}(0,\infty)$.

\medskip

$\bullet$\ {\it Orlicz spaces}: By an {\it Orlicz function} $\Phi$ on $[0,\infty)$, we mean a continuous increasing  function satisfying $\Phi(0)=0$ and $\lim_{t\to \infty} \Phi(t)=\infty.$ An Orlicz function $\Phi$ is said to be {\it $p$-convex} if the function $t \mapsto\Phi(t^{1/p})$ is convex, and to be {\it$q$-concave} if the function $t\mapsto \Phi(t^{1/q})$ is concave. For a given Orlicz function $\Phi$  that is $p$-convex and $q$-concave for $0<p\leq q <\infty$, the associated {\it Orlicz space} $L_{\Phi}(0,\infty)$ is defined by setting
$$L_{\Phi}(0,\infty):=\left\{f\in L_0(0,\infty):\ \int_0^\infty \Phi\Big(\frac{|f(s)|}{\lambda}\Big)\, ds<\infty\mbox{ for some }\lambda>0\right\}$$
equipped with the quasi-norm
$$\|f\|_\Phi:=\inf\left\{\lambda>0:\int_0^\infty \Phi\Big(\frac{|f(s)|}{\lambda}\Big)\, ds\leq 1\right\}.$$

The Orlicz space  $L_\Phi(0,\infty)$ is a symmetric  quasi-Banach function space.  We refer to \cite{Kras-Rutickii, Maligranda2} for more details on  Orlicz functions and  Orlicz spaces.
We will also make use of the  following more general space:  for $0<r\leq \infty$, the space $L_{\Phi,r}(0,\infty)$ is the collection of all $f \in L_0(0,\infty)$ for which 
$\| f\|_{\Phi,r} <\infty$ where 
\begin{equation*}
\big\|f \big\|_{\Phi,r} :=\begin{cases}
 \left(r\displaystyle{\int_{0}^\infty \big(t \| \ch_{\{|f|>t\}} \|_\Phi}\big)^r\ \frac{dt}{t}\right)^{1/r},  &0< r < \infty; \\
\displaystyle{\sup_{t >0} t \| \ch_{\{|f|>t\}} \|_\Phi}, &r= \infty.
\end{cases} 
 \end{equation*}
The space $L_{\Phi,r}(0,\infty)$ was introduced in \cite{Hao-Li} and was called Orlicz-Lorentz
space there. Note that if $\Phi(t)=t^p$,  then  $L_{\Phi,r}(0,\infty)$ coincides with  the Lorentz space $L_{p,r}(0,\infty)$. The space $L_{\Phi,\infty}(0,\infty)$ is also known as the weak Orlicz space. 


\medskip

We conclude this subsection  by recalling the notion of \emph{submajorization}. Given $x, y\in L_0(\mathcal M)$, we say that $y$ is {\it submajorized} in the sense of Hardy-Littlewood-P\'{o}lya by $x$ (written $y\prec\prec x$) if
\[
\int_0^t \mu_s(y)\, ds\leq \int_0^t \mu_s(x)\, ds,\quad t>0.
\]

In the sequel, we will frequently use the submajorization  inequality
\begin{equation}\label{sub-sum}
\mu(x+y) \prec\prec \mu(x) +\mu(y), \quad x,y \in L_0(\M).
\end{equation}
Another fact that is important below is that if $T: L_1(\M) +\M \longrightarrow  L_1(\M) +\M$ satisfies 
$\max\{ \| T: L_1(\M) \to L_1(\M)\|; \| T: \M \to \M\|\} \leq 1$ then  for every $x \in L_1(\M) +\M$, $Tx \prec\prec x$. This fact can be found in \cite[Proposition~4.1]{DDP3}. In particular, if $x \in L_1(\M)+\M$ and $(p_k)_{k\geq 1}$  is a sequence of  mutually disjoint projections from $\M$  then, 
\begin{equation}\label{sub-diagonal}
\sum_{k\geq 1} p_k x p_k \prec\prec x.  
\end{equation}



\subsection{Noncommutative martingale Hardy spaces}\label{martingale}
By a filtration  $(\M_n)_{n \geq 1}$,   we mean an
increasing sequence of von Neumann subalgebras of ${\M}$
whose union  is w*-dense in
$\M$.  Throughout, we will work with a fixed filtration $(\M_n)_{n\geq 1}$. For  every $n\geq 1$,  we assume  further that there is a trace preserving conditional expectation $\E_n$
from ${\M}$ onto  ${\M}_n$. This is the case  if  for every $n\geq 1$, the restriction of  the trace $\T$  on $\M_n$ is semifinite. It is well-know that for $1\leq p<\infty$,  the $\E_n$'s  extend to be contractive  projections from $L_p(\M,\T)$ onto $L_p(\M_n,\T|_{\M_n})$. In particular, they are well-defined  on $L_1(\M) +\M$. 

\begin{definition}
A sequence $x = (x_n)_{n\geq 1}$ in $L_1(\M)+\M$ is called \emph{a
noncommutative martingale} with respect to the filtration  $({\M}_n)_{n \geq
1}$ if  for every $n \geq 1$,
\[
\E_n (x_{n+1}) = x_n.
\]
\end{definition}

Let $E$ be a symmetric quasi-Banach function space  and $x=(x_n)_{n\geq 1}$ be a martingale. If  for every $n\geq 1$, $x_n \in E(\M_n)$, then we  say that  $(x_n)_{n\geq 1}$ is an $E(\M)$-martingale.  In this case, we set
\begin{equation*}\| x \|_{E(\M)}= \sup_{n \geq 1} \|
x_n \|_{E(\M)}.
\end{equation*}
If $\| x \|_{E(\M)} < \infty$, then $x$   will be called
a bounded $E(\M)$-martingale.

For a martingale $x=(x_n)_{n\geq 1}$, we set $dx_n=x_n-x_{n-1}$  for $n\geq 1$ with the usual convention that $x_0=0$. The sequence $dx=(dx_n)_{n\geq 1}$ is called the \emph{martingale difference sequence} of $x$. A martingale $x$  is called a \emph{finite martingale} if there exists $N\in \mathbb{N}$ such that $dx_n=0$ for all $n\geq N$.

Let us now  review some basic  definitions related to   martingale Hardy  spaces associated to   noncommutative   symmetric spaces. 

Following \cite{PX}, we define  the  \emph{column square functions} of a given
  martingale $x = (x_k)_{k\geq 1}$ by setting:
 \[
 S_{c,n} (x) = \Big ( \sum^n_{k = 1} |dx_k |^2 \Big )^{1/2}, \quad
 S_c (x) = \Big ( \sum^{\infty}_{k = 1} |dx_k |^2 \Big )^{1/2}\,.
 \]
  For convenience, we will use the notation 
  \[
  \cal{S}_{c,n}(a)= \Big ( \sum^n_{k = 1} |a_k |^2 \Big )^{1/2}, \quad
 \cal{S}_c (a) = \Big ( \sum^{\infty}_{k = 1} |a_k |^2 \Big )^{1/2}
 \]
 for sequences $a=(a_k)_{k\geq 1}$ in $L_1(\M)+\M$     that are not necessarily  martingale difference sequences.  
 It is worth pointing out that the infinite sums of positive operators stated above may not always make sense as operators. However, if the sequence   $(S_{c,n}(x))_{n\geq 1}$  is order bounded,  then it admits a  supremum. In that case, $S_c(x)$ may be taken to be  the limit  of the sequence  $(S_{c,n}(x) )_{n\geq 1}$ for the measure topology.  Similar remark applies to  the sequence $(\cal{S}_{c,n}(a))_{n\geq 1}$.                                                                                                                                                                                                                                                                                                                                                                                                                                                                                                                                                                                                                                                                                                                                                                                                                                                                                                                                                                                                                                                                                                                                                                                                                                                                                                                                                                                                                                                                                                                                                                                                                                                                                                                                                                                                                                                                                                                                                                                                                                                                                                                                                                                                                                                                                                                                                                                                                                                                                                                                                                                                                                                                                                                                                                                                                                                                                                                                                                                                                                                                                                                                                                                                                                                                                                                                                                                                                                                                                                                                                                                                                                                                                                                                                                                                                                                                                                                                                                                                                                                                                             
 
 We will now  describe   noncommutative martingale Hardy spaces associated with symmetric Banach  function spaces.
 In this paper, we will only work with   Hardy spaces built from  square functions. We refer to \cite{Ran-Int,Ran-Int2} for other types  of noncommutative  martingale Hardy spaces.
 
Assume  that $E$ is a symmetric  Banach function space on $(0,\infty)$. We denote by $\cal{F}_E$ the collection  of all finite martingales in $E(\M) \cap \M$.  
For  $x=(x_k)_{k\geq 1} \in \cal{F}_E$, we set:
\[
\big\| x  \big\|_{\mathcal{H}_E^c}= \big\| {S}_c(x) \big\|_{E(\M)} .\]
Then $(\cal{F}_E, \|\cdot\|_{\cal{H}_E^c})$ is a normed space.
 If we denote by $(e_{i,j})_{i,j \geq 1}$   the family of unit matrices in $\cal{B}(\ell_2)$, then 
 the  correspondence  $x\mapsto \sum_{k\geq 1} dx_k \otimes e_{k,1}$ maps $\cal{F}_E$ isometrically into 
a (not necessarily closed) linear subspace of $E(\M\overline{\otimes} \cal{B}(\ell_2))$.

We define the \emph{ column martingale  Hardy space}
  $\mathcal{H}_E^c (\mathcal{M})$ to  be the completion  of $(\cal{F}_E, \|\cdot\|_{\cal{H}_E^c})$.   It then follows that 
$\H_E^c(\M)$  embeds isometrically into a closed subspace of  the quasi-Banach space $E(\M\overline{\otimes} \cal{B}(\ell_2))$. 

We remark that   using the above definition with  $L_p(0,\infty)$ where $1\leq p<\infty$, we recover the definition of $\H_p^c(\M)$ as defined in \cite{PX}. However, the case $p=\infty$ is not covered by the above description since it requires separability.  We define  $\H_\infty^c(\M)$ as the collection of  all martingales in $\M$ for which the column square functions exists in $\M$. The  norm in $\H_\infty^c(\M)$ is defined by:
\[
\big\|x\big\|_{\H_\infty^c}= \big\|S_c(x)\big\|_\infty, \quad x \in \H_\infty^c(\M).
\] 

 In the sequel, we will also make use of the  more general  column space $E(\M;\ell_2^c)$ which is defined as the set of all sequences  $a=(a_k)_{k\geq 1}$ in $E(\M)$ for which $\cal{S}_c(a)$ exists in $E(\M)$.  In this case, we set
\[
\big\|a \big\|_{E(\M;\ell_2^c)} = \|\cal{S}_c(a)  \|_{E(\M)}.
\]
 Under the above quasi-norm, one can easily see that  $E(\M;\ell_2^c)$ is a quasi-Banach space. 
  The closed  subspace of $E(\M;\ell_2^c)$  consisting of adapted sequences will be denoted by $E^{\rm ad}(\M;\ell_2^c)$. That is,
\[
E^{\rm ad}(\M;\ell_2^c)=\Big\{  (a_n)_{n\geq 1} \in E(\M;\ell_2^c) :  \forall n\geq 1, a_n \in E(\M_n) \Big\}.
\]

Below, we use  the notation  $\H_{p,q}^c(\M)$ for the noncommutative column  martingale  Hardy space  associated with the Lorentz space $L_{p,q}(0,\infty)$. Similarly, $\H_{\Phi}^c(\M)$ and $\H_{\Phi,r}^c(\M)$  are used for  noncommutative column  Hardy spaces associated with  the function spaces $L_\Phi(0,\infty)$ and $L_{\Phi,r}(0,\infty)$ respectively.

Note that for  $1<p<\infty$,  it follows from the noncommutative  Stein inequality that $L_p^{\rm ad}(\M;\ell_2^c)$ is a complemented subspace of  $L_p(\M;\ell_2^c)$.  Similarly, for $1\leq p<\infty$, $\H_p^c(\M)$ is a complemented subspace of $L_p^{\rm ad}(\M;\ell_2^c)$. The case $p=1$ is a consequence of the noncommutative L\'epingle-Yor inequality (\cite{Qiu1}). However, in general,  $\H_1^c(\M)$ is not a complemented subspace of $L_1(\M;\ell_2^c)$. Likewise, $\H_\infty^c(\M)$ is not  a complemented  subspace of  $L_\infty(\M;\ell_2^c)$.

\subsection{Basics of interpolations}

Let $(A_0, A_1)$ be a  compatible couple of  quasi-Banach spaces  in the sense that both $A_0$ and $A_1$ embed continuously into some topological vector space $\mathcal{Z}$. This allows us to  define the spaces $A_0 \cap A_1$  and $A_0 +A_1$. These are quasi-Banach spaces when equipped with  quasi-norms:
\[
\big\|x  \big\|_{A_0 \cap A_1}=\max\Big\{ \big\|x  \big\|_{A_0 } , \big\|x  \big\|_{ A_1}\Big\}
\]
and
\[
\big\|x  \big\|_{A_0 + A_1}=\inf\Big\{ \big\|x_0  \big\|_{A_0 } + \big\|x_1  \big\|_{ A_1}: \, x=x_0 +x_1,\,  x_0 \in A_0,\,  x_1 \in A_1\Big\},
\]
respectively.  
\begin{definition}\label{def-interpolation}
A  (quasi) Banach space $A$ is called  an \emph{interpolation space} for the couple $(A_0, A_1)$  if $A_0 \cap A_1 \subseteq A \subseteq A_0 +A_1$ and whenever  a bounded linear operator $T: A_0 +A_1\to A_0 +A_1$ is such that $T(A_0) \subseteq A_0$ and $T(A_1) \subseteq A_1$, we have $T(A)\subseteq A$
and 
\[
\big\|T:A\to A \big\|\leq c\max\left\{\big\|T :A_0 \to A_0\big\| , \big\|T: A_1 \to A_1\big\| \right\}
\] 
for some constant $c$. 
\end{definition}
If $A$ is an interpolation space for the couple $(A_0, A_1)$, we write $A\in {\rm Int}(A_0,A_1)$. Below, we are primarily  interested in an  interpolation method generally referred to as   \emph{the real  interpolation method} which we now  briefly review.
 
 A fundamental notion  for the construction of   real interpolation spaces is the  \emph{$K$-functional}. For $x \in A_0 +A_1$, we define the $K$-functional  by setting for $t>0$,
\[
K(x, t) =K\big(x,t; A_0,A_1\big)=\inf\Big\{ \big\|x_0  \big\|_{A_0 } + t\big\|x_1  \big\|_{ A_1}:\,  x=x_0 +x_1,\,  x_0 \in A_0,\,  x_1 \in A_1\Big\}.
\]
Note that  for  each $t>0$, $x \mapsto K(x,t)$ gives an equivalent  quasi-norm on $A_0  +A_1$.

If $0<\theta<1$ and $1\leq  \g<\infty$, the real interpolation space $A_{\theta, \g}=(A_0, A_1)_{\theta, \g}$  is defined by $x \in A_{\theta,\g}$ if and only if
\[
\big\| x \big\|_{(A_0,A_1)_{\theta, \g}} =\Big( \int_0^\infty \big(t^{-\theta}K\big(x, t; A_0,A_1\big)\big)^{\g }\ \frac{dt}{t} \Big)^{1/\g} <\infty.
\]
If $\g=\infty$, we define $ x \in A_{\theta,\infty}$ if and only if 
\[
\big\| x\big\|_{(A_0, A_1)_{\theta, \infty}}= \sup_{t>0} t^{-\theta} K(x, t; A_0, A_1)<\infty.
\]
For $0<\theta<1$ and $0< \g\leq \infty$, the functional $\|\cdot\|_{\theta,\g}$ is a  quasi-norm. In  the case where $A_0$ and $A_1$ are Banach spaces and $1\leq \g\leq \infty$,   $(A_{\theta,\g}, \|\cdot\|_{\theta,\g})$  can be renormed to be a Banach space. Moreover,
the space $A_{\theta,\g}$ is an interpolation space for the couple $(A_0, A_1)$  in the sense of Definition~\ref{def-interpolation}. There is also an equivalent description of $A_{\theta,\gamma}$  using a   dual notion called  $J$-functionals but this will not be needed  for our purpose below. Our main references for interpolations are the books \cite{BENSHA} and \cite{BL}.

\smallskip

It is worth mentioning   that the  real interpolation method is well understood for the couple $(L_{p_0}, L_{p_1})$ for both   the classical case and the  noncommutative case. We record here that
Lorentz spaces can be realized as  real interpolation spaces  for  the couple $(L_{p_0}, L_{p_1})$. More precisely, 
 if $\cal N$  is a semifinite von Neumann algebra, $1 \leq p_0<p_1\leq \infty$, $0<\theta<1$, and $1\leq q\leq \infty$ then,   up to equivalent  quasi-norms (independent of $\cal N$),  
\[
\big(L_{p_0}(\cal N), L_{p_1}(\cal N)\big)_{\theta,q}= L_{p,q}(\cal N)
\]
where $1/p=(1-\theta)/p_0 +\theta/p_1$. By reiteration,  if $1\leq \lambda, \g\leq \infty$, we also have 
\begin{equation}\label{Lp}
\big(L_{p_0, \lambda}(\cal N), L_{p_1, \g}(\cal N)\big)_{\theta,q}= L_{p,q}(\cal N)
\end{equation}
 with equivalent  quasi-norms when  $1 \leq p_0<p_1\leq \infty$, $0<\theta<1$,  $1\leq q\leq \infty$, and $1/p=(1-\theta)/p_0 +\theta/p_1$. These facts can be found in \cite{PX3} and will be used repeatedly throughout.

\smallskip

Wolff's interpolation theorem will be needed in  the next subsection. We record it here for convenience.
\begin{theorem}[{\cite[Theorem~1]{Wolff}}]\label{W} Let  $B_i$ ($i=1,2,3,4$) be quasi-Banach spaces such that $B_1 \cap B_4$ is dense in $B_j$ ($j=2,3$) and  satisfy:
\[
B_2=(B_1, B_3)_{\phi, r} \ \ \text{and}\ \ B_3=(B_2, B_4)_{\theta, q}
\]
for $0<\phi,\theta<1$ and  $0<r,q\leq \infty$. Then 
\[
B_2=(B_1, B_4)_{\xi,r} \ \ \text{and}\ \  B_3=(B_1, B_4)_{\zeta, q}
\]
where $\displaystyle{\xi=\frac{\phi\theta}{1-\phi +\phi\theta}}$ and $\displaystyle{\zeta=\frac{\theta}{1-\phi+\phi\theta}}$.
\end{theorem}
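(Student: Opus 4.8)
\emph{Step 1: reduction to the couple $(B_1,B_4)$ and bookkeeping.} The plan is to obtain the whole statement from the single identity $B_3=(B_1,B_4)_{\zeta,q}$ (proved by a direct $K$-functional estimate), after which the $B_2$-identity is free by reiteration. First I would note that, since every real interpolation space lies between the intersection and the sum of its defining couple, the hypotheses give
\[
B_1\cap B_3\hookrightarrow B_2\hookrightarrow B_1+B_3,\qquad B_2\cap B_4\hookrightarrow B_3\hookrightarrow B_2+B_4,
\]
together with the elementary scaling bounds $K(x,t;B_1,B_3)\lesssim t^{\phi}\|x\|_{B_2}$ for $x\in B_2$ and $K(x,s;B_2,B_4)\lesssim s^{\theta}\|x\|_{B_3}$ for $x\in B_3$, simply because $(B_1,B_3)_{\phi,r}\hookrightarrow(B_1,B_3)_{\phi,\infty}$ and $(B_2,B_4)_{\theta,q}\hookrightarrow(B_2,B_4)_{\theta,\infty}$. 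Iterating the two splittings furnished by these bounds (the mechanism of Step 2, used here only qualitatively) already shows that $B_2$ and $B_3$ embed into $B_1+B_4$, so $K(\cdot,t;B_1,B_4)$ makes sense on them, and by the density hypothesis it then suffices to verify the claimed quasi-norm equivalences on $B_1\cap B_4$. I would also record the arithmetic relations $\xi=\phi\,\zeta$ and $\zeta=(1-\theta)\xi+\theta$, which single out the reiteration formulas used in Step 3.

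\emph{Step 2: proving $B_3=(B_1,B_4)_{\zeta,q}$.} For the inclusion $B_3\hookrightarrow(B_1,B_4)_{\zeta,q}$, I would fix $x\in B_3$ and a scale $s$, take a near-optimal splitting $x=z_s+v_s$ with $z_s\in B_2$, $v_s\in B_4$ and $\|z_s\|_{B_2}+s\|v_s\|_{B_4}\lesssim K(x,s;B_2,B_4)$, then split $z_s=u+w$ in the couple $(B_1,B_3)$ with $\|u\|_{B_1}\lesssim\sigma^{\phi}\|z_s\|_{B_2}$ and $\|w\|_{B_3}\lesssim\sigma^{\phi-1}\|z_s\|_{B_2}$, and — since the residual $w$ again lies in $B_3$ — repeat. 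Because $0<\phi,\theta<1$, tuning the scales $s,\sigma$ at each stage forces the $B_3$-norm of the successive residuals to decay geometrically, so the accumulated $B_1$- and $B_4$-pieces sum to a legitimate decomposition of $x$ in $B_1+B_4$; estimating $K(x,t;B_1,B_4)$ from those pieces and integrating against $t^{-\zeta}$ yields $\|x\|_{(B_1,B_4)_{\zeta,q}}\lesssim\|x\|_{B_3}$, the exponent $\zeta=\theta/(1-\phi+\phi\theta)$ emerging precisely from the scale bookkeeping. The reverse inclusion $(B_1,B_4)_{\zeta,q}\hookrightarrow B_3$ I would prove symmetrically: starting from a near-optimal $(B_1,B_4)$-splitting of $x$ at a scale $t$, manufacture a $(B_2,B_4)$-splitting, pay for each step with a residual whose contribution (again by $\phi,\theta<1$) is geometrically summable, then sum and integrate against $s^{-\theta}$.

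\emph{Step 3: the $B_2$-identity.} Substituting $B_3=(B_1,B_4)_{\zeta,q}$ into $B_2=(B_1,B_3)_{\phi,r}$ and invoking the reiteration theorem for the real method in the form $\bigl(X,(X,Y)_{\eta,q_1}\bigr)_{\alpha,r}=(X,Y)_{\alpha\eta,r}$ with $(X,Y)=(B_1,B_4)$, $\eta=\zeta$, $\alpha=\phi$ gives $B_2=(B_1,B_4)_{\phi\zeta,r}=(B_1,B_4)_{\xi,r}$. (Symmetrically, $B_3=(B_1,B_4)_{\zeta,q}$ also follows from the $B_2$-identity via $\bigl((X,Y)_{\eta,q_0},Y\bigr)_{\beta,q}=(X,Y)_{(1-\beta)\eta+\beta,q}$ with $\eta=\xi$, $\beta=\theta$; this is why establishing either one of the two identities in full is enough.)

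I expect Step 2 to be the main obstacle. Since $B_1,B_2,B_3,B_4$ are not assumed nested, a $B_1$- or $B_4$-element cannot simply be regarded as a $B_3$-element (nor a $B_1$-element as a $B_2$-element), so chaining the two given decompositions just once leaves an uncontrolled term in one of the middle spaces; the content of Wolff's theorem is that this term can itself be re-decomposed and that, with the scales chosen correctly, the resulting errors form a geometric series. Making that series converge in $B_1+B_4$ with summable (quasi-)norm contributions — keeping the quasi-triangle constants, which are active when $r$ or $q<1$, under control, and reading off the exact exponents $\xi$ and $\zeta$ from the accumulated scales — is the delicate part of the argument.
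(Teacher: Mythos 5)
You should first note that the paper does not prove this statement at all: Theorem~\ref{W} is quoted from Wolff's note and used as a black box, so your sketch has to be measured against Wolff's original argument. Your Step 1 and Step 3 are fine (the identities $\xi=\phi\zeta$ and $\zeta=(1-\theta)\xi+\theta$ are correct, and the reiteration $\bigl(X,(X,Y)_{\eta,q_1}\bigr)_{\alpha,r}=(X,Y)_{\alpha\eta,r}$ is legitimate), and the first half of Step 2 --- iterating the two near-optimal splittings with geometrically tuned scales to control $K(t,\cdot\,;B_1,B_4)$ on $B_3$ --- is indeed the mechanism behind one half of Wolff's proof. The gaps are in the rest of Step 2. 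First, the reverse inclusion cannot be done ``symmetrically'': from a near-optimal splitting $x=u+v$ with $u\in B_1$, $v\in B_4$ there is no way to ``manufacture a $(B_2,B_4)$-splitting,'' because in general $B_1\not\subseteq B_2+B_4$ (take $B_i=L_{p_i}(0,\infty)$ with $p_1<p_2<p_3<p_4$: an $L_{p_1}$-function singular at a point need not lie in $L_{p_2}+L_{p_4}$), so the $B_1$-piece cannot be re-decomposed and the proposed iteration never starts. Wolff's argument for this direction is of a different nature: for $a$ in the dense set $B_1\cap B_4$ (which by hypothesis sits inside $B_2\cap B_3$) the two hypotheses give $\|a\|_{B_2}\lesssim\|a\|_{B_1}^{1-\phi}\|a\|_{B_3}^{\phi}$ and $\|a\|_{B_3}\lesssim\|a\|_{B_2}^{1-\theta}\|a\|_{B_4}^{\theta}$; substituting one into the other and absorbing the finite factor $\|a\|_{B_3}^{\phi(1-\theta)}$ yields $\|a\|_{B_3}\lesssim\|a\|_{B_1}^{1-\zeta}\|a\|_{B_4}^{\zeta}$ (similarly with $\xi$ for $B_2$), i.e.\ a $J$-functional (class $C_J$) estimate; this absorption trick, plus density to exploit it, is the missing idea, and it is exactly where the hypothesis that $B_1\cap B_4$ is dense in $B_2$ and $B_3$ is genuinely used.

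Second, the fine indices: as written, your iteration controls the residuals only through $\|w\|_{B_3}\lesssim\sigma^{\phi-1}\|z_s\|_{B_2}\lesssim\sigma^{\phi-1}s^{\theta}\|x\|_{B_3}$, i.e.\ through the $(\theta,\infty)$ and $(\phi,\infty)$ information, so what it delivers is $K(t,x;B_1,B_4)\lesssim t^{\zeta}\|x\|_{B_3}$, namely $B_3\hookrightarrow(B_1,B_4)_{\zeta,\infty}$; ``integrating against $t^{-\zeta}$'' a bound of the form $t^{\zeta}\|x\|_{B_3}$ diverges for every finite $q$, so the claimed inequality $\|x\|_{(B_1,B_4)_{\zeta,q}}\lesssim\|x\|_{B_3}$ does not follow as stated. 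To recover the fine index $q$ (and $r$ for $B_2$) you must either carry the full profile $s\mapsto K(x,s;B_2,B_4)$ through every stage of the iteration and finish with a discrete Hardy-type inequality, or --- the cleaner route, essentially Wolff's --- stop at the statement that $B_2$ and $B_3$ are of class $C_K$ and $C_J$ with parameters $\xi$ and $\zeta$ relative to $(B_1,B_4)$, and then feed the hypothesized identities $B_2=(B_1,B_3)_{\phi,r}$ and $B_3=(B_2,B_4)_{\theta,q}$ into the strong form of the reiteration theorem, which needs only class-$\theta$ endpoints and automatically returns the correct second indices. As it stands, your sketch conflates the weak-type conclusion of the iteration with the strong one, and its ``symmetric'' reverse direction would fail outright.
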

It is more convenient to  apply the Wolff's interpolation theorem using intervals.  
\begin{definition} 
A family of  quasi-Banach spaces $(A_{p,\g})_{p,\g \in (0,\infty]}$ is said to form a \emph{real interpolation scale} on an interval $I \subseteq \mathbb{R}\cup \{\infty\}$ if for every $p,q \in I$, $0<\g_1,\g_2,\g \leq \infty$, $0<\theta<1$, and $1/r=(1-\theta)/p + \theta/q$,
\[
A_{r, \g}= (A_{p,\g_1}, A_{q,\g_2})_{\theta,\g}
\]
with equivalent quasi-norms.
\end{definition}

The next lemma is a   version of Wolff's interpolation theorem at the level of family of real interpolation scale.

\begin{lemma}[{\cite[Lemma~3.4]{Ran-Int}}]\label{union}
 Assume that a family of quasi-Banach spaces $\mathfrak{F}=(A_{p,\g})_{p,\g \in (0,\infty]}$  forms a \emph{real interpolation scale} on  two  different intervals $I $ and $J$.  If  $|J \cap I |>1$,  then 
$\mathfrak{F}$  forms a real interpolation scale on  the interval $I\cup J$.
\end{lemma}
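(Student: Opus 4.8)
\textbf{Proof plan for Lemma~\ref{union}.}
The plan is to reduce the assertion to a single application of Theorem~\ref{W} (Wolff's theorem) once we have chosen the four spaces appropriately. Since $|J\cap I|>1$, there exist at least two distinct points in $J\cap I$; more importantly, since $I$ and $J$ are distinct intervals whose intersection is nondegenerate, their union $I\cup J$ is again an interval, and we may pick points that straddle the overlap. First I would fix arbitrary $p,q\in I\cup J$ with $p<q$ (the case $p=q$ being trivial), together with exponents $0<\g_1,\g_2,\g\le\infty$, $0<\theta<1$, and $r$ defined by $1/r=(1-\theta)/p+\theta/q$; the goal is to show $A_{r,\g}=(A_{p,\g_1},A_{q,\g_2})_{\theta,\g}$ with equivalent quasi-norms. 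If both $p,q$ lie in $I$ (or both in $J$) there is nothing to prove, so we may assume, say, $p\in I\setminus J$ and $q\in J\setminus I$, or more generally that $[p,q]$ is not contained in either $I$ or $J$ alone.

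The key step is to choose an auxiliary exponent $s\in I\cap J$ lying strictly between $p$ and $q$: this is possible because $I\cap J$ is a nondegenerate subinterval of the interval $[p,q]\subseteq I\cup J$, so it contains points on both sides of, or at least interlacing with, $p$ and $q$ after relabeling. (One has to be slightly careful with the endpoint/$\infty$ bookkeeping, but the intersection being of cardinality $>1$ guarantees an interior point of $[p,q]$ that lies in $I\cap J$ whenever $[p,q]\not\subseteq I$ and $[p,q]\not\subseteq J$.) With such an $s$ in hand, pick two further exponents $u\in I$ with $u$ on the $p$-side and $v\in J$ with $v$ on the $q$-side so that $p$ and $r$ and $q$ all lie appropriately among $u,s,v$; concretely, arrange $u\le p\le s$ with $u,s\in I$ and $s\le q\le v$ with $s,v\in J$ (again adjusting if $p<q$ forces the reverse inclusion on one side). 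Then apply the real-interpolation-scale hypothesis on $I$ to express the pair involving $p$ and $s$ as an interpolation space, and the scale hypothesis on $J$ to express the pair involving $s$ and $q$; this sets up exactly the hypotheses $B_2=(B_1,B_3)_{\phi,r}$ and $B_3=(B_2,B_4)_{\theta,q}$ of Theorem~\ref{W} with $B_1$ an $A_{u,\cdot}$, $B_4$ an $A_{v,\cdot}$, and $B_2,B_3$ the spaces $A_{p,\g_1},A_{q,\g_2}$ or intermediates thereof. The density condition ``$B_1\cap B_4$ dense in $B_j$'' should follow from the structure of the scale, e.g. from $A_{u,\cdot}\cap A_{v,\cdot}$ containing the finite martingales (or whatever dense core the scale is built on), which embed densely in each $A_{p,\g}$; if the abstract statement of the lemma is to be applied verbatim, this density is part of what ``real interpolation scale'' delivers on each interval. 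Wolff's theorem then yields $A_{p,\g_1}$ and $A_{q,\g_2}$ as real interpolation spaces of the couple $(A_{u,\cdot},A_{v,\cdot})$ at the appropriate parameters.

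The final step is reiteration: having realized the relevant spaces as members of a single real interpolation scale on $[u,v]\subseteq I\cup J$ anchored at the endpoints $u,v$, one invokes the standard reiteration theorem for the real method (as recorded via \eqref{Lp}-type identities and the discussion preceding it, or directly from \cite{BL}) to conclude $(A_{p,\g_1},A_{q,\g_2})_{\theta,\g}=A_{r,\g}$ with $1/r=(1-\theta)/p+\theta/q$. Since $p,q\in I\cup J$ and the exponents were arbitrary, this is precisely the statement that $\mathfrak{F}$ forms a real interpolation scale on $I\cup J$.

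The main obstacle I expect is purely bookkeeping: arranging the four exponents $u,s,v$ (and verifying the strict inequalities and membership in the correct interval) so that Wolff's parameters $\phi,\theta\in(0,1)$ come out correctly, and in particular checking the borderline cases where one of $p,q$ equals an endpoint of $I$ or $J$, or equals $\infty$. The analytic content is entirely contained in Theorem~\ref{W} and in reiteration; the work here is to confirm that the hypothesis $|I\cap J|>1$ is exactly what is needed to place the required intermediate exponent $s$ strictly inside $[p,q]$ in \emph{every} configuration of $p,q$, and to check that the density hypothesis of Wolff's theorem is inherited. I would organize the proof around the two cases ``$[p,q]\subseteq I$ or $[p,q]\subseteq J$'' (trivial) and ``otherwise'' (the substantive case requiring the intermediate point).
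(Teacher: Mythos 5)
Your overall strategy (Wolff's theorem plus reiteration, with the overlap $I\cap J$ supplying the auxiliary exponents) is the right one, but the concrete configuration you propose does not work in the only case that matters. In the substantive case one may assume $p\in I\setminus J$ and $q\in J\setminus I$ with $p<q$; convexity of the intervals then forces \emph{every} point of $I\cap J$ to lie strictly between $p$ and $q$, and it also rules out your outer anchors: any $u\in I$ with $u<p$ lies in $I\setminus J$ (otherwise $u,q\in J$ would force $p\in J$), so the Wolff hypothesis you would need with $B_2=A_{p,\g_1}$, namely that $A_{p,\g_1}$ is an interpolation space of a couple involving $A_{u,\cdot}$ and a space on the $q$-side, is never supplied by the scale assumptions on $I$ or on $J$ --- it is an instance of the very statement being proved. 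In the typical application ($I=(2,\infty]$, $J=[1,\infty)$, $p=1$, $q=\infty$) the points $u<p$, $v>q$ do not even exist. The workable configuration is the opposite of yours: take $B_1=A_{p,\g_1}$, $B_4=A_{q,\g_2}$, and \emph{two} interior points $s_1<s_2$ of $I\cap J$ as $B_2=A_{s_1,\cdot}$, $B_3=A_{s_2,\cdot}$; then the scale on $I$ gives $B_2=(B_1,B_3)_{\phi,\cdot}$ (since $p,s_1,s_2\in I$) and the scale on $J$ gives $B_3=(B_2,B_4)_{\theta,\cdot}$ (since $s_1,s_2,q\in J$), and Wolff anchors $A_{s_1,\cdot}$ and $A_{s_2,\cdot}$ as real interpolation spaces of the couple $(A_{p,\g_1},A_{q,\g_2})$. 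This is exactly why the hypothesis is $|I\cap J|>1$ and not merely $I\cap J\neq\emptyset$: a single intermediate $s$, which is what your plan is built around, cannot feed both Wolff hypotheses.

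Your final step also needs repair. As written ("conclude $(A_{p,\g_1},A_{q,\g_2})_{\theta,\g}=A_{r,\g}$ by reiteration over the couple $(A_{u,\cdot},A_{v,\cdot})$") it is circular whenever $u,v$ lie in different intervals, since identifying $(A_{u,\cdot},A_{v,\cdot})_{\eta,\g}$ with $A_{r,\g}$ is again the statement to be proved. After the correct Wolff step one must instead argue by cases on the target index $r$ defined by $1/r=(1-\theta)/p+\theta/q$: if $r<s_2$ then $r\in I$ (as $p,s_2\in I$) and the scale on $I$ gives $A_{r,\g}=(A_{p,\g_1},A_{s_2,\lambda})_{\eta,\g}$, which reiteration (with $A_{p,\g_1}$ the endpoint itself and $A_{s_2,\lambda}=(A_{p,\g_1},A_{q,\g_2})_{\zeta,\lambda}$ from Wolff) converts into $(A_{p,\g_1},A_{q,\g_2})_{\eta\zeta,\g}$, with $\eta\zeta=\theta$ by the index arithmetic; the case $r\geq s_2$ is symmetric via the scale on $J$. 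This case analysis, together with the observation that $r$ automatically lands in $I$ or in $J$, is missing from your outline. Finally, the density hypothesis $B_1\cap B_4$ dense in $B_2,B_3$ in Theorem~\ref{W} is not "part of what the scale delivers" by definition and does require a word of justification (e.g., via choosing finite secondary indices for the anchors, for which $A_{p}\cap A_{q}\subseteq B_1\cap B_4$ is dense in the real interpolation space); you are right to flag it, but it cannot simply be waved through.
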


\smallskip

A  more  general  real interpolation  type spaces will be essential  in our  consideration below. Recall that a Banach function space $\cal{F}$ has a monotone norm if  whenever $f,g \in  \cal{F}$, $|f| \leq |g| \implies \|f\|_{\cal{F}} \leq  \|g\|_{\cal{F}}$.
\begin{definition}
  An  interpolation space $E$  for a  Banach couple  $(E_0,E_1)$ is said to be \emph{given by a $K$-method} if there exists a Banach function space $\cal{F}$  with monotone norm  such that $x \in E$ if and only if $t\mapsto K(x,t ; E_0,E_1) \in \cal{F}$ 
 and there exists a  constant $C_E>0$  such that
 \[
 C_E^{-1} \big\| t\mapsto K(x,t ; E_0,E_1)\big\|_{\cal{F}} \leq \big\|x\big\|_E \leq C_E \big\| t\mapsto K(x,t ; E_0,E_1)\big\|_{\cal{F}}.
 \]
In this case, we write $E=(E_0, E_1)_{\cal{F};K}$. 
\end{definition}

The following fact will be used in the sequel. This is known as a result of Brudnyi and Krugliak (see \cite[Theorem~6.3]{KaltonSMS}). 
 
 \begin{proposition}\label{K-method}
   Let  $1\leq p<q\leq \infty$. Every interpolation space   $E\in {\rm Int}(L_p,L_q)$ is given by a $K$-method.
  \end{proposition}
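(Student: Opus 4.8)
\textbf{Proof proposal for Proposition~\ref{K-method}.}
The statement to be proven is that every interpolation space $E \in {\rm Int}(L_p, L_q)$, $1 \le p < q \le \infty$, is given by a $K$-method; this is attributed to Brudnyi and Krugliak, so my plan is essentially to quote and assemble the ingredients of their $K$-divisibility machinery. The plan is to first reduce to the abstract statement: one needs a Banach function space $\cal{F}$ (with monotone norm) on the measure space $((0,\infty), dt/t)$ such that $x \in E$ iff $t \mapsto K(x,t;L_p,L_q) \in \cal{F}$, with two-sided norm equivalence. The natural candidate for $\cal{F}$ is the \emph{Calder\'on--Mityagin orbit space}: define $\cal{F}$ to be the set of (equivalence classes of) functions $\psi$ on $(0,\infty)$ for which there exists $x \in E$ with $K(x,t;L_p,L_q) \lesssim \psi(t)$ pointwise after taking the least concave majorant, normed by $\|\psi\|_{\cal F} = \inf\{\|x\|_E : K(\cdot,x) \le \hat\psi\}$ (with $\hat\psi$ the least concave majorant). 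The monotonicity of this norm is then built in by construction.

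The key step, and the one doing all the real work, is the \emph{$K$-divisibility theorem} of Brudnyi--Krugliak: there is an absolute constant $\gamma$ such that if $x \in A_0 + A_1$ and $K(x,t;A_0,A_1) \le \sum_j \omega_j(t)$ for a (locally bounded) family of concave functions $\omega_j$, then $x = \sum_j x_j$ with $K(x_j,t;A_0,A_1) \le \gamma\,\omega_j(t)$ for all $j$. This is what makes the orbit-space description coincide with $E$ rather than merely being contained in it: the couple $(L_p,L_q)$ is a \emph{Calder\'on couple} (relative Calder\'on property), meaning the orbit of $x$ under couple-preserving operators is exactly $\{y : K(\cdot,y) \le C\,K(\cdot,x)\}$, and combined with $K$-divisibility one shows that the interpolation functor $x \mapsto \{T : T\text{ couple map}\}\cdot x$ is generated by the $K$-functional, i.e. $E$ is a $K$-space. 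Concretely: given $E \in {\rm Int}(L_p,L_q)$, show $\|x\|_E \approx \|K(\cdot,x;L_p,L_q)\|_{\cal F}$ where $\cal F$ is as above; the inequality $\|x\|_E \gtrsim \|K(\cdot,x)\|_{\cal F}$ is immediate from the definition of $\cal F$, while the reverse requires that any $y$ with $K(\cdot,y) \lesssim K(\cdot,x)$ lies in the orbit of $x$ (Calder\'on property of $(L_p,L_q)$, which holds since $L_p$-couples are Calder\'on couples by Calder\'on--Mityagin) so that $y \in E$ with $\|y\|_E \lesssim \|x\|_E$, whence $\|x\|_E \lesssim \|K(\cdot,x)\|_{\cal F}$ by taking infimum.

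The remaining steps are bookkeeping: verify that $\cal F$ as defined is a genuine Banach function space on $((0,\infty), dt/t)$ with monotone norm (completeness follows from completeness of $E$ together with $K$-divisibility applied to a rapidly convergent series; monotonicity is automatic since replacing $\psi$ by a pointwise-smaller function only enlarges the admissible set of $x$'s), and that $L_p \cap L_q \subseteq E \subseteq L_p + L_q$ translates to the corresponding two-sided containment needed for $\cal F$ to be nontrivial. One also checks the normalization constant $C_E$ depends only on the interpolation constant of $E$ and the $K$-divisibility constant $\gamma$.

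\textbf{Main obstacle.} The genuinely hard input is the $K$-divisibility theorem itself (and the fact that $(L_p,L_q)$ is a Calder\'on couple); both are deep results of Brudnyi--Krugliak / Calder\'on--Mityagin whose proofs I would not reproduce. Given the paper explicitly cites \cite[Theorem~6.3]{KaltonSMS}, the honest ``proof'' here is to invoke that reference directly rather than reconstruct the divisibility machinery; the only thing left to spell out is the identification of the space $\cal F$ and the two norm-equivalence inequalities above, which follow formally once $K$-divisibility and the Calder\'on property are in hand.
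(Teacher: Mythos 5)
The paper does not prove this proposition at all: it is quoted as a known theorem of Brudnyi--Krugliak via \cite[Theorem~6.3]{KaltonSMS}, so your plan --- identify the two deep inputs ($K$-divisibility and the Calder\'on-couple property of $(L_p,L_q)$) and then invoke the reference --- is essentially the paper's own treatment, and the architecture of your sketch (Calder\'on property $\Rightarrow$ every interpolation space is $K$-monotone, $K$-divisibility $\Rightarrow$ every $K$-monotone space is a $K$-space) is the standard route behind that citation.

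Two small corrections to the sketch. First, the Calder\'on property of $(L_p,L_q)$ for general $1\le p<q\le\infty$ is Sparr's theorem; Calder\'on--Mityagin covers only $(L_1,L_\infty)$. Second, your definition of $\mathcal{F}$ has the domination reversed: as written, $\|\psi\|_{\mathcal{F}}=\inf\{\|x\|_E: K(\cdot,x;L_p,L_q)\le\hat{\psi}\}$ is identically zero, since $x=0$ is always admissible. The intended (standard) construction is that $\psi\in\mathcal{F}$ when its least concave majorant satisfies $\hat{\psi}\le K(\cdot,x;L_p,L_q)$ for some $x\in E$, with $\|\psi\|_{\mathcal{F}}=\inf\|x\|_E$ over such dominating $x$; this is also the version your monotonicity remark implicitly uses, and with it the two norm inequalities follow exactly as you describe, the nontrivial one from $K$-monotonicity of $E$.
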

  
  In the next section, we  will sparingly use the concept of $K$-closed couple which we refer to \cite{KiX,Pisier-int,Ran-Int2} for formal definition and relevant properties.

\medskip

We end this subsection with a discussion on an extrapolation result that will be needed in the next section. 
It  involves the  notion of  Ces\`{a}ro  operator together with  its formal dual.  Recall that the  \emph{Ces\`{a}ro operator} $C:(L_1+L_\infty)(0,\infty)\to (L_{1,\infty}+L_{\infty})(0,\infty)$ is defined by the formula
\[
(Cf)(t):=\frac 1t\int_0^tf(s)\, ds,\quad f\in (L_1+L_\infty)(0,\infty).
\]
 The operator $C^{\ast}$ which we  will refer to as \emph{the dual Ces\`aro operator} is defined by setting
\[
(C^{\ast}f)(t):=\int_t^\infty\frac{f(s)}{s}\, ds,\quad f\in\Lambda_{\log}(0,\infty).
\]
It is known that $C^{\ast}$ is  bounded  from $\Lambda_{\log}(0,\infty)$ into $(L_1+L_{\infty})(0,\infty)$.
A proof of this fact can be found in \cite[Fact~2.9]{Jiao-Sukochev-Wu-Zanin}.
We refer to $C^{\ast}$ as the  (formal) {\it dual of  the Ces\`{a}ro operator} due to the fact that
\[\langle Cf,g\rangle=\langle f,C^{\ast}g\rangle,\quad f,g\in L_2(0,\infty).
\]
We will need  the following  well known results on boundedness of   Ces\`{a}ro  operators (see \cite[Theorem 327]{Hardy-Lit-Polya} for the detailed proof).

\begin{lemma}\label{C-bounded} 
The operators $C$ and  $C^{\ast}$ satisfy the following properties:
\[
\big\|C\big\|_{L_p \to L_p}=p', \quad 1<p\leq\infty,
\]
and
\[
\big\|C^{\ast}\big\|_{L_p \to L_p}=p, \quad 1 \leq p<\infty,
\]
where $p'$ is  the conjugate index of $p$.
\end{lemma}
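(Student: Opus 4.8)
The plan is to establish Lemma~\ref{C-bounded}, i.e., the sharp operator norms $\|C\|_{L_p \to L_p} = p'$ for $1 < p \leq \infty$ and $\|C^\ast\|_{L_p \to L_p} = p$ for $1 \leq p < \infty$. Since the statement explicitly cites \cite[Theorem~327]{Hardy-Lit-Polya}, the cleanest route is to present the classical Hardy inequality argument. First I would treat the upper bound for $C$. For $f \geq 0$ on $(0,\infty)$, write $(Cf)(t) = \frac{1}{t}\int_0^t f(s)\,ds$ and use the substitution $s = tu$ so that $(Cf)(t) = \int_0^1 f(tu)\,du$. Then by Minkowski's integral inequality,
\[
\|Cf\|_{L_p} = \Big\| \int_0^1 f(\,\cdot\, u)\, du \Big\|_{L_p} \leq \int_0^1 \|f(\,\cdot\, u)\|_{L_p}\, du = \int_0^1 u^{-1/p}\, du \cdot \|f\|_{L_p} = \frac{1}{1 - 1/p}\|f\|_{L_p} = p'\|f\|_{L_p},
\]
where the integral $\int_0^1 u^{-1/p}\,du$ converges precisely because $p > 1$. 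For general (complex-valued) $f$ one applies this to $|f|$ and notes $|Cf| \leq C|f|$ pointwise. The case $p = \infty$ is immediate since $|(Cf)(t)| \leq \|f\|_\infty$, giving norm $1 = \infty'$.

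Next I would handle $C^\ast$. For $f \geq 0$, the substitution $s = t/u$ (equivalently $s \mapsto tu$ with $u \in (1,\infty)$, then $u \mapsto 1/u$) gives $(C^\ast f)(t) = \int_t^\infty \frac{f(s)}{s}\,ds = \int_0^1 f(t/u)\,\frac{du}{u} \cdot$ — more carefully, setting $s = t/v$, $ds = -t/v^2\,dv$, we get $(C^\ast f)(t) = \int_0^1 f(t/v)\,\frac{dv}{v}$. Applying Minkowski's integral inequality again,
\[
\|C^\ast f\|_{L_p} \leq \int_0^1 \|f(\,\cdot\,/v)\|_{L_p}\,\frac{dv}{v} = \int_0^1 v^{1/p}\,\frac{dv}{v}\cdot\|f\|_{L_p} = \int_0^1 v^{1/p - 1}\,dv\cdot\|f\|_{L_p} = p\,\|f\|_{L_p},
\]
the integral converging because $1/p > 0$, i.e.\ $p < \infty$. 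This covers $1 \leq p < \infty$, including $p=1$ where the bound is $\|C^\ast f\|_{L_1} \leq \|f\|_{L_1}$, which also follows directly from Tonelli's theorem: $\int_0^\infty \int_t^\infty \frac{|f(s)|}{s}\,ds\,dt = \int_0^\infty \frac{|f(s)|}{s}\Big(\int_0^s dt\Big)ds = \int_0^\infty |f(s)|\,ds$.

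For the matching lower bounds (sharpness), I would test on near-extremizers. For $C$ with $1 < p < \infty$, take $f_\varepsilon(t) = t^{-1/p + \varepsilon}\ch_{(0,1)}(t)$ for small $\varepsilon > 0$; a direct computation shows $\|Cf_\varepsilon\|_{L_p}/\|f_\varepsilon\|_{L_p} \to p'$ as $\varepsilon \to 0^+$, because $(Cf_\varepsilon)(t) = \frac{1}{1/p' + \varepsilon}\, t^{-1/p+\varepsilon}$ on $(0,1)$ and the boundary contribution on $(1,\infty)$ vanishes in the limit. Similarly for $C^\ast$ with $1 \leq p < \infty$, testing on $f_\varepsilon(t) = t^{-1/p - \varepsilon}\ch_{(1,\infty)}(t)$ yields ratio tending to $p$. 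The main obstacle — really the only point requiring a little care rather than being routine — is making the lower-bound test-function computations honest: one must check that the ``error'' pieces of the integrals (the parts outside the interval where the power law is exact) are genuinely lower-order as $\varepsilon \to 0$, and that the same normalization works uniformly. Since the statement is classical and fully referenced, a reasonable alternative is simply to cite \cite[Theorem~327]{Hardy-Lit-Polya} for sharpness and include only the upper-bound arguments above in detail; I would likely do the latter to keep the exposition focused, noting that only the upper bounds are used in what follows.
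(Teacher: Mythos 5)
Your argument is correct: the substitution-plus-Minkowski computation gives exactly the upper bounds $p'$ and $p$ (with the correct change of variables $s=tu$, respectively $s=t/v$), the Tonelli check at $p=1$ is right, and the test functions $t^{-1/p+\varepsilon}\ch_{(0,1)}$ and $t^{-1/p-\varepsilon}\ch_{(1,\infty)}$ do yield the matching lower bounds since the error pieces stay bounded while $\|f_\varepsilon\|_p\to\infty$. The paper itself gives no proof of this lemma but simply cites \cite[Theorem~327]{Hardy-Lit-Polya}, which is precisely the classical argument you reproduce, so your proposal (or your suggested shortcut of citing the reference for sharpness) is fully in line with the paper.
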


We now consider   an extrapolation result from \cite{Jiao-Sukochev-Wu-Zanin}.  Assume that  $(\N_1,\nu_1)$ and $(\N_2,\nu_2)$ are  semifinite von Neumann algebras and suppose that $T: L_2(\N_1,\nu_1) \to L_2(\N_2,\nu_2)$ is a linear operator  that admits  bounded extensions  from $L_p(\N_1,\nu_1)$ to $L_p(\N_2,\nu_2)$ for all $2<p<\infty$ and satisfies further that
\[
\big\| T\big\|_{L_p(\cal{N}_1) \to L_p(\cal{N}_2)} \leq p, \quad 2<p<\infty.
\]
Such assumption is referred to  in \cite{Jiao-Sukochev-Wu-Zanin} as the  \emph{First Extrapolation Condition}.   The following  extrapolation will be used to deduce Proposition~\ref{distribution} below.
\begin{theorem}[{\cite[Theorem~3.2]{Jiao-Sukochev-Wu-Zanin}}]\label{extrapolation}
Suppose that $T$ satisfies the First Extrapolation condition. Then $T$ admits a bounded linear extension  from $(\Lambda_{\log}\cap (L_2+L_\infty))(\N_1,\nu_1)$ into $(L_2 +L_\infty)(\N_2,\nu_2)$. Moreover, we have
\[
\mu^2(Tx) \prec\prec c_{abs}\big( C^*\mu(x)\big)^2, \quad x \in (\Lambda_{\log}\cap (L_2+L_\infty))(\N_1,\nu_1).
\]
\end{theorem}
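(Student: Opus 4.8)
This is a Yano‑type extrapolation, and the whole argument is best carried out at the level of generalized singular value functions and the submajorization order, so that the lack of a canonical ``support decomposition'' in the noncommutative setting is never an obstruction. First I would upgrade the First Extrapolation Condition to the uniform bound $\|T\|_{L_q(\N_1)\to L_q(\N_2)}\le q$ for \emph{all} $q\in[2,\infty)$: for $q>2$ this is the hypothesis, and for $q=2$ it follows from the assumed boundedness of $T$ on $L_2$ together with a routine Fatou/approximation argument on a dense subspace. This yields at once the basic \emph{power estimate}: if $x\in L_q(\N_1)$ and $q\ge2$, then from $t\,\mu_t(Tx)^q\le\int_0^t\mu_s(Tx)^q\,ds\le\|Tx\|_{L_q}^q\le q^q\|\mu(x)\|_{L_q}^q$ we get $\mu_t(Tx)\le q\,t^{-1/q}\,\|\mu(x)\|_{L_q}$ for every such $q$.

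\textbf{Layer decomposition.} By density it suffices to establish the submajorization for $x$ in a convenient dense subclass, say $x\in(L_1\cap L_\infty)(\N_1)$. Fix such an $x$ and decompose it through the spectral projections of $|x|$ at dyadic heights: put $p_n=\ch_{(2^n,2^{n+1}]}(|x|)$, $x_n=xp_n$, so $x=\sum_{n\in\Z}x_n$ and $\|x_n\|_\infty\le2^{n+1}$. Crucially, the hypothesis $\mu(x)\in\Lambda_{\log}$ forces $\mu_\infty(x)=0$, hence each $d_n:=\tau(p_n)$ is finite; thus every $x_n$ lies in all $L_q(\N_1)$ and $\mu(x_n)\le2^{n+1}\ch_{(0,d_n]}$. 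Using the submajorization inequality \eqref{sub-sum} (iterated, with a limiting argument) and linearity of $T$ one gets $\mu(Tx)\prec\prec\sum_n\mu(Tx_n)$, and since $u\mapsto u^2$ is convex and increasing — and both sides are decreasing — this upgrades to $\mu^2(Tx)\prec\prec\bigl(\sum_n\mu(Tx_n)\bigr)^2$. So the task is reduced to estimating $\int_0^T\bigl(\sum_n\mu_s(Tx_n)\bigr)^2\,ds$ by $c_{abs}\int_0^T\bigl(C^*\mu(x)(s)\bigr)^2\,ds$ for every $T>0$.

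\textbf{Optimising the exponent and summing the layers.} Fix a scale $T$ and split the layers into those with $d_n\ge T$ and those with $d_n<T$. For the first group one applies the power estimate and minimises over $q\ge2$: the optimal choice is $q\approx\log(d_n/s)$, giving $\mu_s(Tx_n)\lesssim 2^{n}\bigl(1+\log_+(d_n/s)\bigr)$ on $(0,T)$; summing over these layers and matching against $C^*\mu(x)(s)=\int_s^\infty\mu_r(x)\,\frac{dr}{r}$ is a discretised weighted Hardy‑type computation (writing $\mu_r(x)\approx2^n$ on $(\lambda_{2^{n+1}}(x),\lambda_{2^n}(x)]$ with $\lambda_{2^n}(x)=\sum_{m\ge n}d_m$), in which $\mu(x)\in\Lambda_{\log}$ is precisely what guarantees convergence. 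For the second group I aggregate $y_T:=\sum_{d_n<T}x_n$ and use the $L_2$ bound alone: $\int_0^T\mu_s(Ty_T)^2\,ds\le\|Ty_T\|_{L_2}^2\le4\sum_{d_n<T}\|x_n\|_{L_2}^2$, which is then controlled by $c_{abs}\int_0^T(C^*\mu(x)(s))^2\,ds$ via the elementary fact that for a \emph{decreasing} $g$ one has $C^*g(s)\ge(\log2)\,g(2s)$, whence $\int_0^T(C^*\mu(x))^2\,ds\ge\tfrac{(\log2)^2}{2}\int_0^{T}\mu_s(x)^2\,ds$, which dominates the relevant sum after a Cauchy--Schwarz bookkeeping. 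Combining the two groups (using $(a+b)^2\le2a^2+2b^2$) gives $\mu^2(Tx)\prec\prec c_{abs}\,(C^*\mu(x))^2$ on the dense class, and hence in general.

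\textbf{Membership and the bounded extension.} It remains to deduce $Tx\in(L_2+L_\infty)(\N_2)$ with the asserted norm control. For this I would first check that $C^*$ maps $\Lambda_{\log}\cap(L_2+L_\infty)$ into $L_2+L_\infty$: splitting $\mu(x)$ at the scale $1$, Lemma~\ref{C-bounded} gives $\|C^*(\mu(x)\ch_{(0,1)})\|_{L_2}\le2\|\mu(x)\ch_{(0,1)}\|_{L_2}\lesssim\|x\|_{L_2+L_\infty}$, while $C^*(\mu(x)\ch_{(1,\infty)})$ is bounded by $\int_1^\infty\mu_s(x)\,\frac{ds}{s}\le\|x\|_{\Lambda_{\log}}$; hence $(C^*\mu(x))^2\in L_1+L_\infty$ with $\|C^*\mu(x)\|_{L_2+L_\infty}\lesssim\max\{\|x\|_{L_2+L_\infty},\|x\|_{\Lambda_{\log}}\}$. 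Since $L_1+L_\infty$ is stable under $\prec\prec$, the estimate $\mu^2(Tx)\prec\prec c_{abs}(C^*\mu(x))^2$ forces $Tx\in L_2+L_\infty$ and, by the standard $2$‑convexity relation $\|y\|_{L_2+L_\infty}^2\approx\||y|^2\|_{L_1+L_\infty}$, $\|Tx\|_{L_2+L_\infty}\lesssim\|C^*\mu(x)\|_{L_2+L_\infty}\lesssim\max\{\|x\|_{L_2+L_\infty},\|x\|_{\Lambda_{\log}}\}$. A final density argument extends $T$ from $(L_1\cap L_\infty)(\N_1)$ to all of $(\Lambda_{\log}\cap(L_2+L_\infty))(\N_1)$ with this bound, the submajorization inequality being preserved in the limit. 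The main obstacle is the third step: the operator norm of $T$ on $L_q$ grows only linearly in $q$, and one must balance this against the gain $s^{-1/q}$, which forces the ``right'' exponent to be essentially logarithmic; moreover one must square \emph{before} summing the layers, controlling the aggregate of the small‑measure layers by a single $L_2$ estimate and recognising the logarithmic contribution of the large‑measure layers as a discretisation of $C^*\mu(x)$, using $\Lambda_{\log}$ for convergence and $C^*g\ge(\log2)g(2\cdot)$ for decreasing $g$ to absorb the $L_2$ term.
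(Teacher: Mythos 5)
This statement is not proved in the paper at all: it is quoted verbatim, with attribution, as Theorem~3.2 of \cite{Jiao-Sukochev-Wu-Zanin}, so there is no internal argument to compare yours against. Judged on its own terms, your Yano-type proof sketch is essentially sound: the Chebyshev power estimate $\mu_s(Tx_n)\le q\,s^{-1/q}\|x_n\|_q$, the optimization $q\approx\log(d_n/s)$ giving $\mu_s(Tx_n)\lesssim 2^n\bigl(1+\log_+(d_n/s)\bigr)$, the discrete comparison $\sum_{n:\,d_n>s}2^n\bigl(1+\log(d_n/s)\bigr)\lesssim \mu_s(x)+C^{\ast}[\mu(x)](s)$ (using $\lambda_{2^n}(x)=\sum_{m\ge n}d_m\ge d_n$ and geometric summation of the heights), the absorption of $\mu(x)$ via $C^{\ast}g\ge(\log 2)\,g(2\cdot)$, and the membership/extension step through $\|y\|_{L_2+L_\infty}^2\approx\big\||y|^2\big\|_{L_1+L_\infty}$ and stability of $L_1+L_\infty$ under $\prec\prec$ all check out, and the dense-class truncation argument does go through for $\Lambda_{\log}\cap(L_2+L_\infty)$.

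Two points need repair, though neither is fatal. First, a logical ordering slip: having reduced to bounding $\int_0^T\bigl(\sum_n\mu_s(Tx_n)\bigr)^2ds$, you cannot then replace $\sum_{d_n<T}\mu_s(Tx_n)$ by $\mu_s(Ty_T)$ — that inequality goes the wrong way. For each fixed $T$ you should decompose $x$ into the individual large layers plus the single block $y_T=\sum_{d_n<T}x_n$ \emph{before} passing to singular values and submajorization; since the grouping may depend on $T$ this is legitimate, and the rest of your estimate is unchanged. Second, the domination $\sum_{d_n<T}\|x_n\|_{L_2}^2\lesssim\int_0^T\mu_s(x)^2\,ds$ is true but is not a ``Cauchy--Schwarz bookkeeping'': you need to split the thin layers into those with $\lambda_{2^{n+1}}(x)<T$, whose rearrangement intervals sit disjointly inside $(0,2T)$ and are controlled by $\int_0^{2T}\mu_s(x)^2\,ds\le 2\int_0^T\mu_s(x)^2\,ds$, and those with $\lambda_{2^{n+1}}(x)\ge T$, whose heights satisfy $2^{n+1}\le\mu_T(x)$ so that $\sum 4^nd_n\le T\sum 4^n\lesssim T\mu_T(x)^2\le\int_0^T\mu_s(x)^2\,ds$. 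With these corrections the argument delivers the stated submajorization and the bounded extension.
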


The following variant of the dual Doob inequality  is essential in the our proof  in next section. 

\begin{proposition}\label{distribution}
Let $(e_k)_{k\geq 1}$ be  a mutually disjoint sequence of projections in $\M$.   If $a$ is a positive operator so that $a^{1/2}  \in \big(\Lambda_{\log} \cap( L_2 + L_\infty)\big)(\M)$,  then
\[
\mu\Big( \sum_{k\geq 1} \E_{k-1}(e_ka e_k) \Big) \prec\prec c_{abs}\,  \Big(C^*[\mu^{1/2}(a)] \Big)^2.
\]
\end{proposition}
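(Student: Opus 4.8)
The plan is to realize the operator $x \mapsto \sum_{k\geq 1}\E_{k-1}(e_k x e_k)$ as a map satisfying the First Extrapolation Condition and then invoke Theorem~\ref{extrapolation} directly. More precisely, I would work on the amplified von Neumann algebra $\N = \M\overline{\otimes}\B(\ell_2)$ (or on $\ell_\infty(\N)$-type constructions) so that the conditioning and the disjoint cutting can be packaged as a single linear operator $T$ between $L_p$-spaces, with the target algebra chosen so that $\mu(Tx)$ controls $\mu\big(\sum_k \E_{k-1}(e_k x e_k)\big)$. The point of moving to the amplification is that $\sum_k \E_{k-1}(e_k \cdot e_k)$ itself is not a single conditional expectation, but $x \mapsto (\E_{k-1}(e_k x e_k))_{k\geq 1}$ is a positive unital (after suitable normalization) map into a diagonal subalgebra, and summing the diagonal entries is, up to submajorization, controlled via \eqref{sub-diagonal}.

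The key steps, in order, would be: (1) Show that $T$ is bounded on $L_2$ with norm $\leq 2$ — this is essentially the dual Doob inequality at $p=2$ (equivalently Stein's inequality / the $L_2$ boundedness of the conditioned square function operator), which is classical. (2) Show $\|T\|_{L_p\to L_p}\leq p$ for all $2<p<\infty$; here I expect to use the noncommutative dual Doob inequality of Junge–Xu, whose constant is known to be $O(p)$ as $p\to\infty$ (this linear-in-$p$ growth is precisely what matches the First Extrapolation Condition and why the dual Cesàro operator appears). The disjointness of the $e_k$ lets one pass from $\sum_k\E_{k-1}(e_k x e_k)$ to the honest dual Doob quantity $\sum_k \E_{k-1}(y_k)$ with $y_k = e_k x e_k \geq 0$ and $\sum y_k \leq x$ (using \eqref{sub-diagonal} to absorb the cutting into a submajorization, which costs nothing). (3) Apply Theorem~\ref{extrapolation} to $T$ acting on $a$ (with $x=a$, noting $a^{1/2}\in(\Lambda_{\log}\cap(L_2+L_\infty))(\M)$ is exactly the hypothesis that $a$ lies in the domain after the square-root reparametrization $\mu^2(Ta)\prec\prec c_{abs}(C^*\mu(a))^2$ becomes, with $a$ replaced appropriately, $\mu(\sum_k\E_{k-1}(e_k a e_k))\prec\prec c_{abs}(C^*[\mu^{1/2}(a)])^2$). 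Combining with the submajorization from step (2) and \eqref{sub-sum}/\eqref{sub-diagonal} gives the stated conclusion.

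The main obstacle I anticipate is step (2): verifying cleanly that the relevant dual Doob constant grows linearly in $p$ and that the disjoint-cutting operation is genuinely compatible with the $L_p$-$L_p$ bound rather than merely the submajorization. One has to be careful that Theorem~\ref{extrapolation} is applied to a bona fide linear operator $T$ (the map $a\mapsto \sum_k\E_{k-1}(e_k a e_k)$ is linear, so this is fine), and that the ``square'' in the conclusion of Theorem~\ref{extrapolation} is tracked correctly: the extrapolation theorem produces $\mu^2(Tx)\prec\prec c_{abs}(C^*\mu(x))^2$, so to land on $\big(C^*[\mu^{1/2}(a)]\big)^2$ on the right one must set up $T$ so that it plays the role of a ``square-root-level'' operator — concretely, apply the extrapolation machinery to the sublinear/linear quantity governing $\big(\sum_k \E_{k-1}(e_k a e_k)\big)^{1/2}$, or equivalently feed in $\mu(x)=\mu^{1/2}(a)$ by a change of variable. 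Once the bookkeeping of these square roots and the linear-in-$p$ constant is pinned down, the rest is a direct citation of Theorem~\ref{extrapolation} together with the elementary submajorization facts \eqref{sub-sum} and \eqref{sub-diagonal}.
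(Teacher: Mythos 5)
Your overall strategy---realize the map as a linear operator satisfying the First Extrapolation Condition and then cite Theorem~\ref{extrapolation}---is indeed the paper's strategy, but the step you explicitly defer (``set up $T$ so that it plays the role of a square-root-level operator'') is the entire substance of the proof, and as written your steps (2)--(3) fail. First, the constant in the noncommutative dual Doob inequality in $L_q$ grows like $q^2$, not linearly in $q$; consequently the linear map $a\mapsto\sum_{k}\E_{k-1}(e_kae_k)$ has $L_p\to L_p$ norm of order $p^2$ and does \emph{not} satisfy the First Extrapolation Condition. Second, even if one had such a bound, applying Theorem~\ref{extrapolation} to that map with $x=a$ would give $\mu^2(Ta)\prec\prec c_{abs}\big(C^*[\mu(a)]\big)^2$, i.e.\ a bound in terms of $C^*[\mu(a)]$, which is not the stated $\big(C^*[\mu^{1/2}(a)]\big)^2$; moreover the hypothesis of the proposition concerns $a^{1/2}$, not $a$. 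Your fallback, ``feed in $\mu(x)=\mu^{1/2}(a)$ by a change of variable,'' is not an available move: the extrapolation theorem applies to a bona fide linear operator acting on measurable operators, not to rearrangements, and the map $a^{1/2}\mapsto\big(\sum_k\E_{k-1}(e_kae_k)\big)^{1/2}$ is not linear.

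What is missing is the linearization at the square-root level, which the paper obtains from Junge's factorization of conditional expectations (Proposition~\ref{module}): for each $k$ there is an $\M_{k-1}$-module isometry $u_{k-1}$ on $L_2(\M)$ with $|u_{k-1}(y)|^2=\E_{k-1}(y^*y)\otimes e_{1,1}$. Composing the column embedding $\Theta(x)=\sum_{k\geq 1}xe_k\otimes e_{k,1}$ (an $L_p$-contraction for $p\geq 2$, by disjointness of the $e_k$) with $T(a)=\sum_{k\geq 1}u_{k-1}(a_{k1})\otimes e_{k,1}$ produces a genuine linear operator satisfying $|T\Theta(x)|^2=\sum_{k\geq 1}\E_{k-1}(e_k|x|^2e_k)\otimes e_{1,1}\otimes e_{1,1}$. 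The dual Doob inequality is then invoked at exponent $p/2$ on $|T\Theta(x)|^2$, and its $O\big((p/2)^2\big)$ constant becomes $O(p)$ after taking the square root, so $\|T\Theta\|_{L_p\to L_p}\lesssim p$ and the First Extrapolation Condition holds for $T\Theta$. Applying Theorem~\ref{extrapolation} to $T\Theta$ at $x=a^{1/2}$ (which is exactly where the hypothesis $a^{1/2}\in(\Lambda_{\log}\cap(L_2+L_\infty))(\M)$ enters) yields $\mu\big(\sum_k\E_{k-1}(e_kae_k)\big)\prec\prec c_{abs}\big(C^*[\mu^{1/2}(a)]\big)^2$ directly; the inequality \eqref{sub-diagonal} is not needed to ``absorb the cutting,'' since the cutting is already built into $\Theta$. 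Your step (1) and the general idea of amplification are fine, but without this explicit construction the argument has a genuine gap.
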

In preparation of the proof, we recall a crucial result established by Junge in \cite[Proposition~2.8]{Ju} (see also \cite{Chen-Ran-Xu} for the nonseparable case).

\begin{proposition}\label{module}
Let $\cal{N}$ be a semifinite von Neuman subalgebra of $\M$. If  $\E: \M \to \cal{N}$ is the trace preserving  conditional expectation, then there exists  an  $\cal{N}$-module  linear isometry $u: L_2(\M) \to L_2(\cal{N} \overline{\otimes} \cal{B}(\ell_2))$ such that
\[
u(x)^*u(y)=\E(x^*y) \otimes e_{1,1},  \quad x,y \in L_2(\M).
\]
\end{proposition}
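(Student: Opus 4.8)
The plan is to reduce the statement to the existence of a countable \emph{quasi-basis} for $L_2(\M)$, viewed as a right $\N$-module through $\E$: a sequence $(v_k)_{k\geq1}$ in $L_2(\M)$ with
\[
x=\sum_{k\geq1}v_k\,\E(v_k^*x),\qquad x\in L_2(\M),
\]
the series converging in $L_2(\M)$. Granting this, one sets $u(x):=\sum_{k\geq1}\E(v_k^*x)\ot e_{k,1}$. The Bessel inequality $\sum_k\E(v_k^*x)^*\E(v_k^*x)\leq\E(x^*x)$ in $L_1(\N)_+$ (a byproduct of the construction below) shows, upon applying $\T$, that each $\E(v_k^*x)$ lies in $L_2(\N)$ and that $u(x)\in L_2(\N\overline{\otimes}\mathcal{B}(\ell_2))$. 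Expanding $u(x)^*u(y)$, collapsing the orthogonal double sum via $e_{1,j}e_{k,1}=\d_{j,k}e_{1,1}$, and inserting the reconstruction formula yields $u(x)^*u(y)=\bigl(\sum_k\E(x^*v_k)\E(v_k^*y)\bigr)\ot e_{1,1}=\E(x^*y)\ot e_{1,1}$ (first for $x,y$ in the dense subspace $\M\cap L_2(\M)$, then by continuity); in particular $\|u(x)\|_2^2=\T\bigl(\E(x^*x)\bigr)=\|x\|_2^2$ since $\E$ is trace preserving, so $u$ is a linear isometry, and $\E(v_k^*xb)=\E(v_k^*x)b$ for $b\in\N$ gives $u(xb)=u(x)(b\ot1)$, so $u$ is a right $\N$-module map. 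The whole proposition thus rests on producing the quasi-basis. Since $H$ is separable, $L_2(\M)$ is a separable Hilbert space; fix a sequence $(\xi_n)_{n\geq1}$ total in it. (The nonseparable case calls for a transfinite version of what follows and a larger index set in place of $\ell_2$, as in \cite{Chen-Ran-Xu}.)

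Constructing the quasi-basis is the heart of the matter, and the one subtlety is that $\E(\xi^*\xi)$ lies in $L_1(\N)_+$, not in $\N$, so $\xi$ cannot be normalized directly; the remedy is a spectral truncation. Given $\xi\in L_2(\M)$, put $h:=\E(\xi^*\xi)$ and, for $m\in\Z$, $p_m:=\chi_{[2^{-m},\,2^{-m+1})}(h)\in\N$. These are pairwise orthogonal with $\sum_m p_m=\chi_{(0,\infty)}(h)=:r$, and $\xi r=\xi$ because $\E\bigl((\xi(1-r))^*\xi(1-r)\bigr)=(1-r)h(1-r)=0$ and $\E$ is faithful. On each $p_m$ the operator $hp_m$ is bounded and boundedly invertible in $p_m\N p_m$, so $w_m:=\xi p_m(hp_m)^{-1/2}\in L_2(\M)$ is well defined, and the $\N$-bimodularity of $\E$ together with $[h,p_m]=0$ gives $\E(w_m^*w_{m'})=\d_{m,m'}p_m$, $\E(w_m^*\xi)=(hp_m)^{1/2}$ and $w_m\E(w_m^*\xi)=\xi p_m$. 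Hence $\sum_m w_m\E(w_m^*\xi)=\xi r=\xi$, with pairwise $L_2$-orthogonal summands since $\sum_m\|\xi p_m\|_2^2=\T(hr)=\|\xi\|_2^2$. Thus a single vector $\xi$ produces a countable family $\{w_m\}$ whose $\N$-inner products $\E(w_m^*w_{m'})$ are $0$ or a projection and whose associated module projection reproduces $\xi$.

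To finish, one runs a Gram--Schmidt recursion over $(\xi_n)$. Suppose a countable ``orthonormal'' system $\cal{V}_{n-1}$ has been assembled, with $\E(v^*v')$ always $0$ or a projection, and let $Q_{n-1}:=\sum_{v\in\cal{V}_{n-1}}v\,\E(v^*\cdot)$ be the associated module-orthogonal projection, of norm $\leq1$ by Bessel. Put $\eta_n:=\xi_n-Q_{n-1}\xi_n$; a short computation gives $\E(v^*\eta_n)=0$ for every $v\in\cal{V}_{n-1}$, so the truncation step applied to $\eta_n$ produces a countable family $\{w_{n,m}\}_m$, orthogonal to $\cal{V}_{n-1}$ and to one another, reproducing $\eta_n$. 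Let $\cal{V}_n:=\cal{V}_{n-1}\cup\{w_{n,m}\}_m$, so that $\xi_n=Q_{n-1}\xi_n+\eta_n$ lies in the range of $Q_n$. Enumerating $\cal{V}:=\bigcup_n\cal{V}_n$ as $(v_k)_{k\geq1}$, Bessel's inequality makes $P:=\sum_k v_k\,\E(v_k^*\cdot)$ a well-defined idempotent self-adjoint module map of norm $\leq1$ whose range contains every $\xi_n$, hence is dense, hence $P=\mathrm{id}$ — which is precisely the reconstruction formula. I expect the delicate point to be the bookkeeping of this recursion, chiefly checking that each new family $\{w_{n,m}\}_m$ is orthogonal to everything produced before and that the various series converge; the spectral-truncation lemma itself is a short computation. (Alternatively, one may realize $L_2(\M)$ with its $\E$-valued form as the right $\N e_{\N}\cong\N$-module $\overline{\M e_{\N}}\subseteq L_2(\M_1)e_{\N}$ inside the Jones basic construction $\M_1=\langle\M,e_{\N}\rangle$ — semifinite because $\E$ is trace preserving — and invoke the structure theory of $L_2$-modules over $\N$ to embed this $\N$-module into $L_2(\N\overline{\otimes}\mathcal{B}(\ell_2))$ compatibly with inner products; this packages the same content differently.)
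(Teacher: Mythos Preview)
The paper does not actually prove this proposition: it is stated as a known result and attributed to Junge \cite[Proposition~2.8]{Ju} (with \cite{Chen-Ran-Xu} cited for the nonseparable case), with no argument supplied. Your proposal therefore goes well beyond what the paper does. That said, your sketch is correct and is essentially the construction carried out in those references: one builds a module quasi-basis for $L_2(\M)$ over $\N$ by Gram--Schmidt, using spectral truncation of $\E(\xi^*\xi)\in L_1(\N)_+$ to manufacture vectors $w$ with $\E(w^*w)$ a projection, and then reads off the isometry $u$ from the coefficient map $x\mapsto\sum_k\E(v_k^*x)\ot e_{k,1}$. The Bessel inequality, the module property $u(xb)=u(x)(b\ot 1)$, and the identity $u(x)^*u(y)=\E(x^*y)\ot e_{1,1}$ all follow exactly as you indicate. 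Your parenthetical alternative via the basic construction $\langle\M,e_{\N}\rangle$ is also a legitimate route to the same conclusion. In short: your argument is sound and aligns with the cited literature; the paper itself simply quotes the result.
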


\begin{proof}[Proof of Proposition~\ref{distribution}]
Let $\Theta: L_2(\M) \to L_2(\M \overline{\otimes} \cal{B}(\ell^2))$  be defined by
\[
\Theta(x)=\sum_{k\geq 1} xe_k \otimes e_{k,1}.
\]
Then, for  $2\leq p<\infty$ and $x\in L_p(\M)$,   we have
\begin{align*}
\big\|\Theta(x)\big\|_p^p &=\big\|\big(\sum_{k\geq 1} e_k|x|^2e_k)^{1/2}\big\|_p^p\\
&=\big\|\sum_{k\geq 1} e_k|x|^2e_k\big\|_{p/2}^{p/2}\\
&\leq \big\| |x|^2 \|_{p/2}^{p/2}=\big\|x\big\|_p^p.
\end{align*}
That is, $\big\|\Theta\big\|_{L_p\to L_p} \leq 1$. Next, for  $k\geq 1$, let $u_k: L_2(\M) \to L_2(\M_k \overline{\otimes} \cal{B}(\ell_2)) $ be  the $\M_k$-module map  as in Proposition~\ref{module}  that satisfies
\[
|u_k(x)|^2 =\E_k(x^*x) \otimes e_{1,1}, \quad  x \in L_2(\M).
\]
Consider  the mapping $T$ formally defined by  the formula
\[
Ta =\sum_{k\geq 1} u_{k-1}(a_{k1}) \otimes e_{k,1},\quad a=(a_{kj})_{k,j\geq 1}\in L_0(\M\overline{\otimes} \cal{B}(\ell_2)).
\]
One can easily see that $|T(a)|^2 =\sum_{k\geq 1} \E_{k-1}(|a_{k1}|^2) \otimes e_{1,1}\otimes e_{1,1}$.  By the dual Doob inequality  (\cite{Ju, JX2}), we have for $2\leq p<\infty$,
\begin{align*}
\big\|Ta\|_{p}^p &=\big\| |Ta|^2\big\|_{p/2}^{p/2}\\
 &\leq (c_{abs} (p/2)^2)^{p/2} \big\| \sum_{k\geq  1} |a_{k1}|^2 \big\|_{p/2}^{p/2}\\
 &\leq c_{abs}^{p/2} p^p \big\|a\big\|_p^p.
\end{align*}
We obtain that $\|T\|_{L_p \to L_p} \leq c_{abs}^{1/2} p $. This shows  in particular that the operator \[
T\circ\Theta: L_2(\M) \to L_2(\M \overline\otimes \cal{B}(\ell_2) \overline\otimes \cal{B}(\ell_2))
\] satisfies the First Extrapolation Condition.  Applying Theorem~\ref{extrapolation}, we have for every  $x  \in (\Lambda_{\log}\cap (L_2+L_\infty))(\M)$,
\[
\mu^2(T\circ\Theta(x)) \prec\prec c_{abs}\big( C^*\mu(x)\big)^2.
\]
Note that $\mu^2(T\circ\Theta(x))=\mu(|T\circ\Theta(x)|^2)$ and $|T\circ\Theta(x)|^2 =\sum_{k\geq 1}\E_{k-1}(e_k|x|^2e_k) \otimes e_{1,1} \otimes e_{1,1}$.  The desired conclusion  follows by applying the above  submajorization to  $x=a^{1/2}$.
\end{proof}

\section{Interpolations of  martingale Hardy spaces}

\subsection{An estimate of the $K$-functional of the couple $(\H_2^c, \H_\infty^c)$}

The following is the  primary result of the paper.
\begin{theorem}\label{main}
 For every $x \in  \H_{\Lambda_{\log}}^c(\M) \cap (\H_2^c(\M) +\H_\infty^c(\M))$ and $t>0$, the following inequality holds:
\[
K\big(x, t; \H_2^c(\M), \H_\infty^c(\M) \big)\leq C_{abs} \left(\int_0^{t^2} \Big(\mu_u(S_c(x))+ C^{\ast}[\mu(S_c(x)](u)\Big)^2 \, du\right)^{1/2}
\]
where $C^{\ast}$ is the dual Ces\`aro operator. 
\end{theorem}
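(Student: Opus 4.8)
The plan is to construct, for each $t>0$, an explicit decomposition $x = y + z$ with $y \in \H_2^c(\M)$ and $z \in \H_\infty^c(\M)$ whose norms are controlled by the two terms on the right. The natural device is a spectral cutoff of the column square function: set $S = S_c(x)$, let $e = \chi_{(0,\mu_{t^2}(S)]}(S)$ be the spectral projection of $S$ corresponding to its "small" part, and define $z$ to be the martingale with difference sequence $(dx_k\, e)_{k\ge 1}$ (multiplying on the right by $e$), and $y = x - z$, i.e. the martingale with differences $(dx_k\,(\mathbf 1 - e))_{k\ge 1}$. Here right-multiplication by the fixed projection $e$ commutes with taking conditional expectations in the sense needed to keep $y,z$ genuine martingales, and one has $S_c(z) = S\,e$ (since $|dx_k e|^2 = e|dx_k|^2 e$ does \emph{not} sum nicely — so more care is needed: instead one should use $S_c(z)^2 = \sum_k e\,|dx_k|^2\, e$, which is \emph{not} $eS^2e$ unless the $dx_k$ are handled collectively). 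This is the standard subtlety: the right way is to cut \emph{at the level of the column in $E(\M\overline\otimes\mathcal B(\ell_2))$}, viewing $x$ as $\sum_k dx_k\otimes e_{k,1}$, whose modulus is exactly $S$; then right-multiplying this single column operator by $e$ gives $z\leftrightarrow (\sum_k dx_k\otimes e_{k,1})e$ with $|{\cdot}|^2 = eS^2e = (Se)^2$, hence $\|z\|_{\H_\infty^c} = \|Se\|_\infty \le \mu_{t^2}(S)$, and similarly $|y|^2 = (S(\mathbf 1-e))^2$, so $\|y\|_{\H_2^c}^2 = \tau(S(\mathbf 1-e))^2 \le \int_0^{t^2}\mu_u(S)^2\,du$. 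This already gives the bound $K(x,t)\le \left(\int_0^{t^2}\mu_u(S)^2\,du\right)^{1/2} + t\,\mu_{t^2}(S)$, but \emph{only} in the ambient space $E(\M\overline\otimes\mathcal B(\ell_2))$ — the issue is that $y$ and $z$ so constructed need not be martingales.

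To fix the martingale obstruction I would follow the strategy signalled in the introduction: use the \emph{dual Ces\`aro operator} via Proposition~\ref{distribution}. The point is that the naive cutoff $z$ above, while not a martingale, can be \emph{projected} back onto $\H_\infty^c(\M)$, or rather one builds the martingale decomposition using a stopping-time-type argument whose error is governed by a sum of the form $\sum_k \E_{k-1}(e_k\,a\,e_k)$ with $a = S^2$ and $(e_k)$ a disjointification of the spectral scale of $S$. Concretely: choose $\lambda = \mu_{t^2}(S)$, let $e = \chi_{(\lambda,\infty)}(S)$ and $e^\perp = \mathbf 1 - e$; the "good" martingale $y$ should have $S_c(y)$ comparable to $S e^\perp$ giving the first term, while correcting $y$ to be a true martingale adapted cutoff introduces terms of the type $\E_{k-1}(\text{(projection)}\,|dx_k|^2\,(\text{projection}))$, and Proposition~\ref{distribution} (applied with $a$ built from $S^2$ and the projections coming from the spectral decomposition of $S$ at dyadic levels $2^j\lambda$) bounds the square function of the correction by $c_{abs}\big(C^*[\mu^{1/2}(a)]\big)^2 = c_{abs}\big(C^*[\mu(S)]\big)^2$ in the submajorization order. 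Integrating this submajorization inequality over $(0,t^2)$ yields exactly the second term $\left(\int_0^{t^2}(C^*[\mu(S)](u))^2\,du\right)^{1/2}$, and combining with $(a+b)^2\le 2a^2+2b^2$ absorbed into $C_{abs}$ gives the stated estimate $K(x,t)\le C_{abs}\left(\int_0^{t^2}(\mu_u(S)+C^*[\mu(S)](u))^2\,du\right)^{1/2}$.

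The hypothesis $x\in \H_{\Lambda_{\log}}^c(\M)$ is precisely what makes $\mu^{1/2}(S^2)=\mu(S)$ lie in $\Lambda_{\log}\cap(L_2+L_\infty)$, so that Proposition~\ref{distribution} (equivalently Theorem~\ref{extrapolation}) applies; this is why that space appears in the statement. I expect the \textbf{main obstacle} to be organizing the correction terms so that they genuinely fit the template $\sum_{k\ge 1}\E_{k-1}(e_k\,a\,e_k)$ of Proposition~\ref{distribution}: one must dyadically partition the range of $S$, use mutual disjointness of the resulting spectral projections and the submajorization tool \eqref{sub-diagonal}, and check that the martingale obtained by adapting the cutoff at each level differs from $x$ by something whose column square function is controlled by a single such sum — this bookkeeping, rather than any single hard inequality, is where the real work lies. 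A secondary technical point is to verify that the decomposition $y,z$ can be taken in $\H_2^c(\M)\cap\H_\infty^c(\M)$ respectively when $x$ is a finite martingale, and then pass to the limit by density, so that the $K$-functional (defined via the martingale Hardy norms, not the ambient $E(\M\overline\otimes\mathcal B(\ell_2))$ norms) is what gets estimated.
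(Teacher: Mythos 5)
Your proposal correctly identifies the three high-level ingredients --- an explicit decomposition $x=y+z$ for each $t$, the dual-Doob/extrapolation estimate of Proposition~\ref{distribution} as the source of the Ces\`aro term, and the role of the hypothesis $x\in\H^c_{\Lambda_{\log}}(\M)$ --- but the construction itself is left as ``bookkeeping,'' and the specific plan you sketch has two gaps that are precisely where the real difficulty sits. First, cutting by spectral projections of $S_c(x)$ (whether a single cut $e=\ch_{(\lambda,\infty)}(S_c(x))$ or dyadic levels $2^j\lambda$) produces non-adapted projections, and you do not give a mechanism that turns the cutoff into a martingale while keeping both norm bounds; the paper instead runs Cuculescu's construction on the submartingale $(S^2_{c,k}(x))_k$ to get an adapted decreasing sequence $(q_k)$, sets $\a_k=dx_kq_k$, corrects to a martingale $\b$ via $d\b_k=\a_k-\E_{k-1}(\a_k)$ (this is exactly where Proposition~\ref{distribution} is applied, with the disjoint projections $q_{k-1}-q_k$ and $a=S_c^2(x)$), and then --- a step entirely absent from your sketch --- runs Cuculescu a \emph{second} time on $(S^2_{c,k}(\b))_k$ to produce $z=\sum_k d\b_k\pi_{k-1}$ with $\|z\|_{\H^c_\infty}\le\sqrt5\,\lambda$; boundedness of the individual differences $d\b_k$ alone does not bound $S_c(z)$, so some second stopping argument is unavoidable.

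Second, and more decisively, your choice of threshold $\lambda=\mu_{t^2}(S_c(x))$ is the commutative choice and it is exactly the one that fails here: to bound $\|y\|_{\H^c_2}$ one needs $\T({\bf 1}-q)\le t^2$ and $\T({\bf 1}-\pi)\le t^2$, and in the noncommutative setting there is no analogue of the distribution identity $|\{S>\mu_{t^2}(S)\}|\le t^2$ for Cuculescu projections (nor for any adapted substitute for your spectral cut). The paper's way around this is to build the Ces\`aro term into the threshold itself, taking
\[
\lambda \approx \frac1t\Big(\int_0^{t^2}\mu_u(S_c^2(x))\,du+\kappa\int_0^{t^2}\big(C^{\ast}[\mu(S_c(x))](u)\big)^2du\Big)^{1/2},
\]
and then proving $\max\{\T({\bf 1}-q),\T({\bf 1}-\pi)\}\le t^2$ by a contradiction argument that uses the submajorization $\mu(S_c^2(\b))\prec\prec 8\mu(S_c^2(x))+2\kappa\big(C^{\ast}[\mu(S_c(x))]\big)^2$ coming from Proposition~\ref{distribution}. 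In other words, the Ces\`aro correction does not merely bound an error term added at the end, as in your sketch; it must enter the definition of the cutoff level for the trace estimate (and hence the $\H^c_2$ bound on $y=x-z$) to go through. Without this choice of $\lambda$ and the two-stage Cuculescu construction, the decomposition you propose cannot be completed as stated.
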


\begin{proof}
The    argument  below  follows the  strategy  used for   the conditioned case in  \cite[Proposition~3.2]{Ran-Int2}. The main  new idea is the incorporation of  the dual Ces\`aro  operator $C^*$.  We include all crucial details for completeness.

Assume that  $x \in  \H_{\Lambda_{\log}}^c(\M) \cap (\H_2^c(\M) + \H_\infty^c(\M))$. This is equivalent to  the condition that $S_c(x) \in \Lambda_{\log}(\M) \cap( L_2 + L_\infty)(\M)$ or $\mu(S_c(x)) \in \Lambda_{\log}(0,\infty) \cap (L_2 + L_\infty)(0,\infty)$.  In particular, $C^*[\mu(S_c(x))]$ is well-defined and belongs to $(L_2 +L_\infty)(0,\infty)$.  We will construct a concrete decomposition of $x$ that  will provide the desired estimate on the $K$-functional.

Fix $t>0$ and set a parameter
\begin{equation}\label{lambda}
\lambda := \frac{3}{t}\Big(  \int_0^{t^2} \mu_u(S_c^2(x))\ du +\kappa\int_0^{t^2} \big(C^{\ast}[\mu(S_c(x))](u)\big)^2 \ du\Big)^{1/2}
\end{equation}
where $\kappa$ denotes the absolute constant from Proposition~\ref{distribution}. Although at first this choice of $\lambda$   seems  artificial, it will become more transparent  during the course of the proof.

As in \cite{Ran-Int2}, the argument  requires two steps.

$\bullet$ Step~1. 

 We apply the construction of Cuculescu's projections to the submartingale $(S_{c, k}^2 (x))_{k \ge 1}$  and the  parameter $\lambda^2$. That is,  we start with  $q_0 ={\bf 1}$ and for $k\geq 1$, we set
\[
q_k := q_{k-1} \ch_{[0,\lambda^2]} \big(
q_{k-1}  S_{c,k}^2(x) q_{k-1} \big)=\ch_{[0,\lambda^2]}\big(
q_{k-1} S_{c,k}^2(x)  q_{k-1} \big)q_{k-1}.
\]
Then   $(q_k)_{k\ge 1}$ is a decreasing sequence  of projections  in $\M$ satisfying the following properties:
\begin{enumerate}
\item $q_k \in \M_{k}$ for every $k\geq 1$;
\item  $q_k$ commutes with $q_{k-1} S_{c, k}^2 (x) q_{k-1}$ for all $k\geq 1$;
\item $q_k S_{c, k}^2 (x) q_k \le \lambda^2 q_k$ for all $k\geq 1$;
\item  if we set $q= \bigwedge_{k \ge 1} q_k$,  then 
$
\lambda^2({\bf 1}- q)  \leq \sum_{k\geq 1} (q_{k-1}-q_{k}) S_{c,k}^2(x) (q_{k-1}-q_k)$.
 \end{enumerate}
Verifications of these  facts  concerning  Cuculescu's projections  can be found in \cite{Chen-Ran-Xu, Cuc, PR, Ran15}.
 
 Using the sequence $(q_k)_{k\geq 0}$, we  consider the following  adapted   sequence $\a=(\a_k)_{k\geq 1}$ by setting:
\begin{equation}\label{a}
 \alpha_k = d x_k q_k, \quad k\geq 1.
\end{equation}
Next, we modify  $\a$ into a martingale difference sequence by setting:
\begin{equation}\label{b}
d\beta_k = \a_k -\E_{k-1}(\a_k), \quad k\geq 1.
\end{equation}
We denote  the corresponding   martingale  by  $\b=(\b_k)_{k\geq 1}$.  The main difference with the conditioned case is that $(q_k)_{k\geq 0}$ is only an adapted sequence as oppose to  being predictable in  \cite{Ran-Int2} and thus the adjustment taken  in the definition of $\b$.

We record the next lemma for further use.
\begin{lemma}\label{lemma1}
 We have the following properties:
\begin{enumerate}[$\rm (i)$]
\item $\sup_{k\geq 1}\|\a_k\|_\infty \le \lambda$;
\item $\sup_{k\geq 1} \|d\beta_k\|_\infty \le 2\lambda$;
\item   $\cal{S}_c^2(\a)  \prec\prec  4 S_c^2(x)$;
\item $\mu(S_c^2(\b)) \prec\prec 8\mu(S_c^2(x)) + 2\kappa\Big(C^*\big[\mu(S_c(x))\big]\Big)^2$.
\end{enumerate}
\end{lemma}
\begin{proof}
The first item   follows easily from the construction. Indeed,  given $k\geq 1$, 
\begin{equation*}\begin{split}
|\a_k|^2  &= q_k | d x_k |^2 q_k \\
&=q_k[ S_{c,k}^2(x) -S_{c,k-1}^2(x)]q_k\\
&\leq q_k S^2_{c, k} (x) q_k
\leq \lambda^2 q_k.
\end{split}
\end{equation*}
The second item clearly follows from the fact that conditional expectations are contractive projections on $\M$.
For the third item,  we consider first the martingale difference sequence 
\[
d\g_k =dx_kq_{k-1},\quad k\geq 1.
\]
We have from the definition  of $d\g$ that  for $m\geq 1$  (with $S_{c,0}(x)=0$),
\begin{align*}
{S}_{c,m}^2(\g) &= \sum_{k=1}^m q_{k-1} |dx_k|^2 q_{k-1}\\
&=\sum_{k=1}^m q_{k-1} [S_{c,k}^2(x)- S_{c,k-1}^2(x)]q_{k-1}\\
&=\sum_{k=1}^m q_{k-1}S_{c,k}^2(x)q_{k-1} - \sum_{k=1}^m q_{k-1} S_{c,k-1}^2(x)q_{k-1}.
\end{align*}
Performing some indexing shift, we obtain that
\[
S_{c,m}^2(\g)= q_{m-1} S_{c,m}^2(x) q_{m-1} +\sum_{k=1}^{m-1} \big(q_{k-1} S_{c,k}^2(x)q_{k-1}- q_{k} S_{c,k}^2(x)q_{k}\big).
\]
  From the fact  that $q_k$ commutes with $q_{k-1} S_{c, k}^2 (x) q_{k-1}$,  we deduce that
\begin{align*}
S_{c,m}^2(\g) &=q_{m-1} S_{c,m}^2(x)  q_{m-1}+\sum_{k=1}^{m-1} (q_{k-1}-q_k)S_{c,k}^2(x) (q_{k-1}-q_k)\\
&\leq q_{m-1} S_{c}^2(x) q_{m-1}+\sum_{k=1}^{m-1} (q_{k-1}-q_k)S_c^2(x) (q_{k-1}-q_k).
\end{align*}
Note that the finite family of projections $\{q_{k-1}-q_k: 1\leq k\leq m-1\} \cup \{q_{m-1}\}$  is mutually disjoint. We may  deduce  from \eqref{sub-diagonal}  that  for every $m\geq 1$, $S_{c,m}^2(\g) \prec\prec S_c^2(x)$. Next, for every  $w>0$, the monotone convergence theorem gives:
\[
\int_0^w \mu_u(S_c^2(\g))\ du =\lim_{m\to \infty} \int_0^w \mu_u(S_{c,m}^2(\g))\ du \leq 
\int_0^w \mu_u(S_c^2(x))\ du. 
\]
That is, $S_{c}^2(\g) \prec\prec S_c^2(x)$.

On the other hand, a simple computation  gives:
\begin{align*}
\cal{S}_c^2(d\g-\a) &=\sum_{k\geq 1} (q_{k-1}-q_k) [S_{c,k}^2(x)-S_{c,k-1}^2(x)] (q_{k-1}-q_k) \\ 
&\leq  \sum_{k\geq 1} (q_{k-1}-q_k)S_c^2(x) (q_{k-1}-q_k)
\end{align*}
 and therefore, we also have from \eqref{sub-diagonal} that  $\cal{S}_c^2(d\g-\a) \prec\prec S_c^2(x)$.  Using the elementary inequality $|a +b|^2 \leq 2|a|^2 + 2|b|^2$ for operators $a$ and $b$, we can   conclude  that
 \[
 \cal{S}_c^2(\a) \leq 2\cal{S}_c^2(d\g-\a) +2S_c^2(\g) \prec\prec  4 S_c^2(x)
 \]
which is item~(iii). 

For the last item,  we  begin with the  simple fact that 
\[
S_c^2(\beta)  \leq  2 \cal{S}_c^2 (\a) + 2\sum_{k\geq 1} | \E_{k-1}(\a_k)|^2.
\]
Recall that $\a_k=dx_kq_k= dx_k(q_k-q_{k-1}) + dx_k q_{k-1}$. Since $(dx_kq_{k-1})_{k\geq 1}$ is a martingale difference sequence, we have $\E_{k-1}(\a_k)=\E_{k-1}(dx_k(q_k-q_{k-1}))$. Applying  Kadison's inequality $|\E_{k-1}(a)|^2 \leq \E_{k-1}(|a|^2)$ for any operator $a$ and $k\geq 1$, we further obtain that
\begin{align*}
S_c^2(\beta) &\leq  2 \cal{S}_c^2(\a) + 2\sum_{k\geq 1} \E_{k-1}\big((q_{k-1}-q_k)|dx_k|^2 (q_{k-1}-q_k)\big)\\
&\leq 2 \cal{S}_c^2(\a) + 2\sum_{k\geq 1} \E_{k-1}\big[(q_{k-1}-q_k)S_c^2(x) (q_{k-1}-q_k)\big].
\end{align*}
It follows from \eqref{sub-sum}  and Item~(iii) that
\begin{align*}
\mu\big(S_c^2(\beta)\big) &\prec\prec  2\mu\big(\cal{S}_c^2(\a) \big) +2 \mu\big(\sum_{k\geq 1} \E_{k-1}\big[(q_{k-1}-q_k)S_c^2(x) (q_{k-1}-q_k)\big]\big)\\
&\prec\prec  8\mu\big({S}_c^2(x) \big) +2 \mu\big(\sum_{k\geq 1} \E_{k-1}\big[(q_{k-1}-q_k)S_c^2(x) (q_{k-1}-q_k)\big]\big).
\end{align*}
Next,  we apply   Proposition~\ref{distribution} to the mutually disjoint sequence $\{(q_{k-1}-q_k)\}$ and the operator  $S_c^2(x)$.  Note that $S_c(x)\in (\Lambda_{\log} \cap(L_2 +L_\infty))(\M)$  and therefore it satisfies the assumption used in Proposition~\ref{distribution}.  We  obtain that 
\[ 
\mu(S_c^2(\beta))  \prec \prec   8 \mu(S_c^2 (x)) + 2\kappa\Big(C^*\big[\mu(S_c(x))\big]\Big)^2\]
which is the desired submajorization.
\end{proof}

$\bullet$  Step~2. Construction of  a  decomposition that will provide    the stated estimate on the $K$-functional.

As in the first step, we  apply the construction of  Cuculescu's projections to  the submartingale $(S_{c, k}^2 (\b))_{k \ge 1}$ with  the parameter $\lambda^2$ where $\b$ is the martingale from \eqref{b}.  
That is,  setting   $\pi_0 ={\bf 1}$ and for $k\geq 1$, we define:
\[
\pi_k := \pi_{k-1} \ch_{[0,\lambda^2]} \big(
\pi_{k-1}  S_{c,k}^2(\b) \pi_{k-1} \big)=\ch_{[0,\lambda^2]}\big(
\pi_{k-1} S_{c,k}^2(\b)  \pi_{k-1} \big)\pi_{k-1}.
\]
Then,   $(\pi_k)_{k\ge 1}$ is a decreasing sequence  of projections  in $\M$. As before, it satisfies the following properties:
\begin{enumerate}
\item $\pi_k \in \M_{k}$ for every $k\geq 1$;
\item  $\pi_k$ commutes with $\pi_{k-1} S_{c, k}^2 (\b) \pi_{k-1}$ for all $k\geq 1$;
\item $\pi_k S_{c, k}^2 (\b) \pi_k \le \lambda^2 \pi_k$ for all $k\geq 1$;
\item  if we set $\pi= \bigwedge_{k \ge 1} \pi_k$,  then 
$
\lambda^2({\bf 1}- \pi)  \leq \sum_{k\geq 1} (\pi_{k-1}-\pi_{k}) S_{c,k}^2(\b) (\pi_{k-1}-\pi_k)$.
 \end{enumerate}

 Next, we define two martingales $y$ and  $z$ by setting:
\begin{equation}\label{z}
z= \sum_{k \geq 1} d \b_k \pi_{k - 1}=\sum_{k\geq 1}[dx_kq_k-\E_{k-1}(dx_kq_k)]\pi_{k-1} \ \ \text{and} \ \ 
y=x-z.
\end{equation}

We will show that this decomposition provides the desired estimate on the $K$-functional.
We  consider first the martingale $z$. We claim that   $z \in \H_\infty^c(\M)$ with 
 \begin{equation}\label{norm-z}
 \big\|z\big\|_{\H_\infty^c} \leq \sqrt{5}  \lambda.
\end{equation}
To verify \eqref{norm-z},  we first fix $m\geq 1$ and estimate $S_{c,m}(z)$.  From the definition of $z$, we have:
\begin{align*}
S_{c,m}^2 (z)& = \sum_{k=1}^m \pi_{k-1} |d \b_k|^2 \pi_{k-1}\\
& = \sum_{k= 1}^m\big ( \pi_{k-1} S^2_{c,k} (\b) \pi_{k-1} - \pi_{k-1} S^2_{c,k-1} (\b) \pi_{k-1} \big )\\
& =  \sum_{k=1}^m   \pi_{k-1} S^2_{c,k} (\b) \pi_{k-1} - \sum_{k=1}^{m-1}\pi_k S^2_{c,k} (\b) \pi_k \\
&=\pi_{m-1}S_{c,m}^2(\b) \pi_{m-1} + \sum_{k=1}^{m-1} ( \pi_{k-1} - \pi_k ) S^2_{c,k} (\b) ( \pi_{k-1} - \pi_k \big )\,,
\end{align*}
where the last equality follows from the commutativity between $\pi_k$ and $\pi_{k-1} S^2_{c,k} (\b) \pi_{k-1}$.  Recall from Lemma~\ref{lemma1}~(ii) that $\| d\b_k\|_{\infty} \le 2 \lambda$.  Using this fact,  we have
\begin{align*} 
\pi_{k-1} S_{c,k}^2(\b)\pi_{k-1} &= \pi_{k-1}[S_{c,k-1}^2(\b) +|d \b_k|^2)]\pi_{k-1}\\
&\leq 5\lambda^2 \pi_{k-1}.
\end{align*}
We can deduce  that for every $m\geq 1$, 
\[
S_{c,m}^2 (z) 
\leq \lambda^2 \pi_{m-1} + 5\lambda^2 \sum_{k=1}^{m-1} ( \pi_{k-1} - \pi_k)
\leq 5\lambda^2{\bf 1}.
\]
Since this  holds for arbitrary $m\geq 1$, we have $S_{c}^2 (z)\leq  5\lambda^2{\bf 1}$
which   shows  that $\big\|z\big\|_{\H_\infty^c} \leq  \sqrt{5} \lambda$ and thus proving inequality \eqref{norm-z}.

 \smallskip

We now deal with the martingale $y$.  We will estimate the norm of  $y$ in $\H_2^c(\M)$.  The verification of the next lemma is the most  delicate part of the argument. We take the opportunity to point out that the proof below together with the submajorization in Lemma~\ref{lemma1}(iv) motivated the choice  of $\lambda$  taken in \eqref{lambda}.
\begin{lemma}\label{trace}
The projections $q$ and $\pi$ satisfy the following property:
\[
\max\big\{ \T({\bf 1}-q), \T({\bf 1}-\pi) \big\} \leq t^2.
\]
\end{lemma}
\begin{proof}
Fix $w>t^2$. We will show that    $\mu_w({\bf 1}-q)= \mu_w({\bf 1}-\pi)=0$.
We provide the argument for ${\bf 1}-\pi$. The case of ${\bf 1}-q$ is simpler since it does not depend on  the second step.

Assume the opposite, i.e, $\mu_w({\bf 1}-\pi)=1$.
We start with  the fact that
\begin{equation}\label{pi}
\lambda^2({\bf 1}- \pi) \leq \sum_{k\geq 1} (\pi_{k-1}-\pi_{k}) S_{c}^2(\b) (\pi_{k-1}-\pi_k).
\end{equation}
Taking generalized singular values and integrals,  inequality \eqref{pi} gives
\[
\lambda^2 \int_0^w \mu_u({\bf 1}-\pi) \, du \leq \int_0^w \mu_u\big(\sum_{k\geq 1} (\pi_{k-1}-\pi_{k}) S_{c}^2(\b) (\pi_{k-1}-\pi_k)\big) \, du
\]
By submajorization and the fact that $\mu({\bf 1}-\pi)$ is a characteristic function and therefore is identically equal to $1$ on the interval $[0,w]$ by assumption, we have 
\[
\lambda^2 w  \leq \int_0^w \mu_u(S_c^2(\b))\ du \leq 8  \int_0^w\mu_u(S_c^2 (x)) \ du + 2\kappa\int_0^w \Big(C^*\big[\mu(S_c(x))\big](u) \Big)^2 \ du\]
where the second inequality comes from  the submajorization in Lemma~\ref{lemma1}(iv). Using the specific value of $\lambda$ in \eqref{lambda}, this  leads  to 
\begin{align*}
9w\int_0^{t^2}\mu_u(S_c^2 (x)) \ du &+ 9w\kappa\int_0^{t^2} \Big(C^*\big[\mu(S_c(x))\big](u) \Big)^2 \ du \\
&\leq 8t^2 \int_0^w\mu_u(S_c^2 (x)) \ du  + 2t^2\kappa \int_0^w \Big(C^*\big[\mu(S_c(x))\big](u) \Big)^2 \ du \\
&= I + II.
\end{align*}
Next, we estimate $I$ and $II$ separately. For $I$, we have:
\begin{align*}
I &=8t^2 \int_0^{t^2}\mu_u(S_c^2 (x)) \ du + 8t^2\int_{t^2}^w\mu_u(S_c^2 (x)) \ du  \\
&\leq  9t^2\int_0^{t^2}\mu_u(S_c^2 (x)) \ du + 8t^2(w-t^2)\mu_{t^2}(S_c^2(x)).
\end{align*}
Similarly, $II$ can be estimated as follows:

\begin{align*}
II &= 2t^2 \kappa \int_0^{t^2} \Big(C^*\big[\mu(S_c(x))\big](u)\Big)^2 \, du  +2t^2 \kappa \int_{t^2}^w \Big(C^*\big[\mu(S_c(x))\big](u)\Big)^2 \, du\\
  &\leq 9t^2 \kappa \int_0^{t^2} \Big(C^*\big[\mu(S_c(x))\big](u)\Big)^2 \, du  +2t^2 \kappa \int_{t^2}^w \Big(C^*\big[\mu(S_c(x))\big](u)\Big)^2 \, du\\
&\leq 9t^2 \kappa \int_0^{t^2} \Big(C^*\big[\mu(S_c(x))\big](u)\Big)^2 \, du  +2t^2 \kappa (w-t^2)\Big(C^*\big[\mu(S_c(x))\big](t^2)\Big)^2 
\end{align*}
where in the last estimate we have use the fact that  the function $C^*[\mu(S_c(x))]$ is decreasing.
 Using these estimates on $I$ and $II$, we obtain after rearrangement  and division  by $w-t^2$ that
\[
9 \int_0^{t^2} \mu_u(S_c^2 (x)) \ du  + 9\kappa \int_0^{t^2}\Big(C^*\big[\mu(S_c(x))\big](u)\Big)^2  \, du  \leq  8t^2\mu_{t^2}(S_c^2(x)) +2\kappa t^2 \big(C^*\big[\mu(S_c(x))\big](t^2)\big)^2.
\]
But  the left hand side of the preceding inequality  is larger than the quantity
$ 9t^2\mu_{t^2}(S_c^2(x)) +9\kappa t^2 \big(C^*\big[\mu(S_c(x))\big](t^2)\big)^2$
 which  is a contradiction. Thus, we may conclude that
$\mu_w({\bf 1}-\pi)=0$. This shows  in particular   that $\T({\bf 1}-\pi)\leq t^2$.
The proof for $\T({\bf 1}-q)$ is identical.
\end{proof}
Now we proceed with the estimation of the $\H_2^c$-norm of $y$. We begin by observing that for $k\geq 1$, it follows from the definitions $z$ and $y$ that $dy_k$ can be split into  three separate parts:  
\begin{align*}
dy_k &= dx_k-d\b_k\pi_{k-1}\\
&= dx_k({\bf 1}-\pi_{k-1}) + dx_k({\bf 1}-q_k)\pi_{k-1}  +\E_{k-1}(dx_k q_k)\pi_{k-1}\\
&=dx_k({\bf 1}-\pi_{k-1}) + dx_k({\bf 1}-q_k)\pi_{k-1}  +\E_{k-1}[dx_k( q_k-q_{k-1})]\pi_{k-1}.
\end{align*}
Using the facts that  conditional expectations  are contractive projections on $L_2$ and the functional   $\|\cdot\|_2^2$ is convex, we have for each $k\geq 1$,
\begin{align*}
\|dy_k\|_2^2 &\leq 4 \|dx_k({\bf 1}-\pi_{k-1})\|_2^2 + 4 \|dx_k({\bf 1}-q_k)\pi_{k-1} \|_2^2 +2\| \E_{k-1}[dx_k( q_k-q_{k-1})]\pi_{k-1}\|_2^2\\
&\leq  4 \|dx_k({\bf 1}-\pi_{k-1})\|_2^2 + 4 \|dx_k({\bf 1}-q_k) \|_2^2 + 2\|dx_k(q_{k-1}-q_k)\|_2^2\\
&\leq 4\|dx_k({\bf 1}-\pi)\|_2^2 + 6\|dx_k({\bf 1}-q)\|_2^2.
\end{align*}
Taking summation over $k$, we deduce the following estimate:
\begin{align*} 
\|y\|_{\H_2^c}^2 
&=\sum_{k\geq 1} \|dy_k\|_2^2\\
&\leq  4\sum_{k\geq 1}  \|dx_k({\bf 1}-\pi)\|_2^2 + 6 \sum_{k\geq 1} \|dx_k({\bf 1}-q)\|_2^2\\
&=  4\sum_{k\geq 1}  \T\big[({\bf 1}-\pi)|dx_k|^2({\bf 1}-\pi)\big] + 6 \sum_{k\geq 1}  \T\big[({\bf 1}-q)|dx_k|^2({\bf 1}-q)\big]\\
&=4\T\big[({\bf 1}-\pi)S_c^2(x)({\bf 1}-\pi)\big] +6 \T\big[({\bf 1}-q)S_c^2(x)({\bf 1}-q)\big].
\end{align*}
Moreover, it follows from Lemma~\ref{trace} and properties of generalized singular values that 
\begin{align*}
\big\|y \big\|_{\H_2^c}^2 &\leq 
4 \int_0^\infty \mu_u\big(({\bf 1}-\pi) S_c^2(x)({\bf 1}-\pi) \big)\, du  + 6 \int_0^\infty \mu_u\big(({\bf 1}-q) S_c^2(x)({\bf 1}-q) \big)\, du\\
&=4 \int_0^{t^2} \mu_u\big(({\bf 1}-\pi) S_c^2(x)({\bf 1}-\pi) \big)\, du  + 6 \int_0^{t^2} \mu_u\big(({\bf 1}-q) S_c^2(x)({\bf 1}-q) \big)\, du\\
&\leq 10 \int_0^{t^2} \mu_u\big( S_c^2(x) \big)\, du.
\end{align*}
Thus, we arrive at the inequality, 
\begin{equation}\label{norm-y}
\big\|y\big\|_{\H_2^c}\leq  \sqrt{10}   \Big( \int_0^{t^2} \mu_u(S_c^2(x))\, du \Big)^{1/2}.
\end{equation}

We  now estimate the $K$-functional using the decomposition $x=y+z$. By 
combining \eqref{norm-z} and \eqref{norm-y}, we have
\begin{align*}
K&\big(x, t; \H_2^c(\M), \H_\infty^c(\M)\big) \leq  \big\|y \big\|_{\H_2^c} + t \big\|z\big\|_{\H_\infty^c} \\
&\leq   \sqrt{10} \Big( \int_0^{t^2} \mu_u(S_c^2(x))\, du \Big)^{1/2} +
3\sqrt{5}\Big(  \int_0^{t^2} \mu_u(S_c^2(x))\ du +\kappa\int_0^{t^2} \big(C^{\ast}[\mu(S_c(x))](u)\big)^2 \ du\Big)^{1/2}.
\end{align*}
We  can now conclude that
\[
K\big(x, t; \H_2^c(\M), \H_\infty^c(\M)\big) \leq [\sqrt{10} +3\sqrt{5}(1+\kappa)^{1/2}]  \Big( \int_0^{t^2}    \big( \mu_u(S_c(x)) +C^*[\mu(S_c(x))](u)\big)^2 \, du\   \Big)^{1/2}. 
\]
The proof   is complete.
\end{proof}

For application purposes, it is important to view  the statement of Theorem~\ref{main} as  a comparison between  two different  $K$-functionals.  We  recall  that for $f \in L_2(0,\infty) +L_\infty(0,\infty)$ and $t>0$, we have from \cite{Holm} the following equivalence:
\[
K(f,t; L_2, L_\infty) \approx\Big( \int_0^{t^2} (\mu_u(f))^2 \, du \Big)^{1/2}.
\]
With this connection, we have the following reformulation of Theorem~\ref{main}.

\begin{remark}\label{main-reform}
For every $x \in  \H_{\Lambda_{\log}}^c(\M) \cap (\H_2^c(\M) +\H_\infty^c(\M))$ and $t>0$, the following inequality holds:
\[
K\big(x, t; \H_2^c(\M), \H_\infty^c(\M) \big)\leq C_{abs} K\big( \mu(S_c(x)) +C^{\ast}[\mu(S_c(x))], t; L_2, L_\infty\big).
\]
\end{remark}

At the time of this writing, we do not know if the use of the operator $C^{\ast}$ in  the estimate in Theorem~\ref{main} can be avoided. That is, it is unclear  if  for all $x \in \H_2^c(\M)+\H_\infty^c(\M)$, the equivalence $K(x, t;\H_2^c(\M),\H_\infty^c(\M))\approx K(S_c(x), t; L_2(\M),\M)$  holds as in the conditioned case treated in \cite{Ran-Int2}. In other words, it is still  an open question  if the couple $(\H_2^c(\M),\H_\infty^c(\M))$ is $K$-closed 
in the larger  couple $(L_2(\M \overline{\otimes}\cal{B}(\ell_2)), \M \overline{\otimes}\cal{B}(\ell_2))$.
Nevertheless, the estimate given in Theorem~\ref{main} and Remark~\ref{main-reform} is sufficient to deduce satisfactory  results concerning interpolations of  the couple $(\H_1^c(\M), \H_\infty^c(\M))$ as we  will explore in the next  subsection.


\subsection{Applications of Theorem~\ref{main}}


\subsubsection{A Peter Jones type interpolation  theorem}

Our result in this part  constitutes    the initial motivation for the paper. 
It fully resolved the real interpolations for the couple $\big( \H_{1}^c(\M), \H_\infty^c(\M)\big)$.
  It will be deduced from Theorem~\ref{main} and Wolff's interpolation theorem.

\begin{theorem}\label{main-interpolation} If $0<\theta <1$, $1/p=1-\theta$, and  $1\leq \g \leq \infty$, then 
\[
\big( \H_{1}^c(\M), \H_\infty^c(\M)\big)_{\theta, \g} = \H_{p,\g}^c(\M)
\]
with equivalent norms.
\end{theorem}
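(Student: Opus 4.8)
The plan is to deduce Theorem~\ref{main-interpolation} from Theorem~\ref{main} in two stages, using Wolff's interpolation theorem (Theorem~\ref{W}, or rather its scale version Lemma~\ref{union}) to bootstrap from the couple $(\H_2^c,\H_\infty^c)$ down to the couple $(\H_1^c,\H_\infty^c)$. First I would treat the couple $(\H_2^c(\M),\H_\infty^c(\M))$ directly. The upper estimate on the $K$-functional is exactly Remark~\ref{main-reform}: for $x$ in the dense subspace $\H_{\Lambda_{\log}}^c(\M)\cap(\H_2^c(\M)+\H_\infty^c(\M))$,
\[
K\big(x,t;\H_2^c(\M),\H_\infty^c(\M)\big)\leq C_{abs}\,K\big(\mu(S_c(x))+C^{\ast}[\mu(S_c(x))],\,t;L_2,L_\infty\big).
\]
Applying the $\|\cdot\|_{\theta,\g}$-norm to both sides and using that $L_{2,\g}(0,\infty)=(L_2,L_\infty)_{\theta,\g}$ (the classical reiteration identity \eqref{Lp}), together with the boundedness of $C^{\ast}$ on $L_{p,\g}$ for $2<p$ (which follows from Lemma~\ref{C-bounded} and interpolation, since $L_{p,\g}\in{\rm Int}(L_2,L_\infty)$ and $C^\ast$ is bounded on $L_p$ for all finite $p$), one gets $\|x\|_{(\H_2^c,\H_\infty^c)_{\theta,\g}}\lesssim \|\mu(S_c(x))\|_{p,\g}=\|x\|_{\H_{p,\g}^c}$. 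For the reverse inclusion one needs the lower bound $K(x,t;\H_2^c,\H_\infty^c)\gtrsim K(S_c(x),t;L_2,L_\infty)$; this is the easy direction, obtained from the general submajorization fact that for any decomposition $x=y+z$ one has $S_c(x)\prec\prec S_c(y)+S_c(z)$ via \eqref{sub-sum}, hence $K(S_c(x),t;L_2(\M\overline\otimes\cal B(\ell_2)),\M\overline\otimes\cal B(\ell_2))\leq K(x,t;\H_2^c,\H_\infty^c)$ after accounting for the quasi-triangle constant; combined with the Holmstedt formula this gives $\|x\|_{\H_{p,\g}^c}\lesssim\|x\|_{(\H_2^c,\H_\infty^c)_{\theta,\g}}$. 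This establishes
\[
\big(\H_2^c(\M),\H_\infty^c(\M)\big)_{\theta,\g}=\H_{p,\g}^c(\M),\qquad 1/p=(1-\theta)/2.
\]

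Next I would promote this to a statement about a real interpolation scale on the interval $[2,\infty]$. Having the identity above for the endpoints $2$ and $\infty$ and all secondary indices, I would reiterate to obtain $(\H_{p_0,\g_1}^c,\H_{p_1,\g_2}^c)_{\theta,\g}=\H_{p,\g}^c$ for all $2\leq p_0<p_1\leq\infty$ and $1/p=(1-\theta)/p_0+\theta/p_1$; this is again a consequence of \eqref{Lp}, the Holmstedt-type comparison of $K$-functionals, and the boundedness of $C^\ast$ on the relevant Lorentz spaces (all indices $>1$). So the family $\{\H_{p,\g}^c(\M)\}$ forms a real interpolation scale on $[2,\infty]$.

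The third stage is to extend the scale to the interval $[1,\infty]$ by Wolff. To apply Lemma~\ref{union} I need the scale property on a second interval overlapping $[2,\infty]$ in more than one point, i.e. on some interval $[1,q]$ with $q>2$. For this I would invoke the known Peter Jones interpolation result for the conditioned Hardy spaces from \cite{Ran-Int2}? — no, rather I should use that on $(1,\infty)$ the spaces $\H_p^c(\M)$ are complemented in $L_p^{\rm ad}(\M;\ell_2^c)$ (noted in the excerpt, via Stein's inequality and, at $p=1$, the L\'epingle–Yor inequality \cite{Qiu1}), and $L_p^{\rm ad}(\M;\ell_2^c)$ is complemented in $L_p(\M;\ell_2^c)$ for $1<p<\infty$; hence on the interval $(1,\infty)$ the scale $\{\H_{p,\g}^c\}$ inherits the interpolation identity from the classical/noncommutative $L_p$-scale identity \eqref{Lp} by a standard retraction–corectraction argument, so $\{\H_{p,\g}^c\}$ is a real interpolation scale on any $[q_0,q_1]\subset(1,\infty)$. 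Choosing $q_0\in(1,2)$ and $q_1>2$, the two intervals $(1,\infty)$ (as realized by the complementation argument) and $[2,\infty]$ (from Stage~2) overlap in the whole of $[2,q_1]$, which has more than one point; Lemma~\ref{union} then yields the scale property on $(1,\infty)\cup[2,\infty]=(1,\infty]$. Finally, to reach $p=1$ I apply Wolff's theorem (Theorem~\ref{W}) one last time with, say, $B_1=\H_1^c(\M)$, $B_4=\H_\infty^c(\M)$, $B_2=\H_{p_0,\g}^c(\M)$ for a suitable $p_0\in(1,2)$ and $B_3=\H_{p_1,\g'}^c(\M)$ for $p_1\in(p_0,\infty)$: the identities $B_2=(B_1,B_3)_{\phi,\g}$ (interpolation within $(1,\infty)$, using that $\H_1^c$ is complemented in $L_1^{\rm ad}(\M;\ell_2^c)$ by L\'epingle–Yor) and $B_3=(B_2,B_4)_{\theta,\g'}$ (the scale property on $(1,\infty]$ from the previous step) are both available, the density hypothesis $B_1\cap B_4$ dense in $B_2,B_3$ holds because finite martingales in $\M$ are dense in all these spaces, and Wolff produces $B_2=(B_1,B_4)_{\xi,\g}$, $B_3=(B_1,B_4)_{\zeta,\g'}$ with the arithmetic of $\xi,\zeta$ matching $1/p=1-\theta$. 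Taking $\g=\g'$ arbitrary gives exactly $\big(\H_1^c(\M),\H_\infty^c(\M)\big)_{\theta,\g}=\H_{p,\g}^c(\M)$.

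The main obstacle I anticipate is Stage~1: squeezing a usable two-sided estimate of the $K$-functional of $(\H_2^c,\H_\infty^c)$ out of the one-sided bound in Theorem~\ref{main}, despite the fact (flagged in the paragraph after Remark~\ref{main-reform}) that this couple is not known to be $K$-closed in $(L_2(\M\overline\otimes\cal B(\ell_2)),\M\overline\otimes\cal B(\ell_2))$. The resolution is that one does not need $K$-closedness: the extra term $C^\ast[\mu(S_c(x))]$ in the upper bound is harmless after taking $\|\cdot\|_{\theta,\g}$-norms precisely because $C^\ast$ is bounded on every Lorentz space $L_{p,\g}$ with $1<p<\infty$, so it contributes only a constant factor; and the lower bound $K(x,t;\H_2^c,\H_\infty^c)\gtrsim K(S_c(x),t;L_2,L_\infty)$ needs only submajorization, not closedness. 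So the ``gap'' between $K(x,t)$ and $K(S_c(x),t)$ disappears at the level of the interpolation \emph{functor} even though it persists at the level of the $K$-functional itself — this is the conceptual point that makes the whole argument go through, and the rest is bookkeeping with Wolff's theorem and standard retraction arguments.
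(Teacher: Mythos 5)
Your overall architecture is the paper's: the substantive new content is your Stage~1, which is exactly the paper's Step~1 (apply Theorem~\ref{main}/Remark~\ref{main-reform}, pass to the $(\theta,\g)$-functor, absorb the $C^{\ast}$-term via Lemma~\ref{C-bounded} and boundedness of $C^{\ast}$ on $L_{q,\g}$ for $q>2$, and get the easy reverse inequality from the isometric embedding $x\mapsto(dx_n)$ into the couple of column spaces -- your submajorization phrasing of this lower bound is equivalent), and the endgame is Wolff's theorem, which the paper runs through Lemma~\ref{union} with $I=(2,\infty]$ and $J=[1,\infty)$, citing the known finite-index interpolation for $J$. Two points in your write-up need attention. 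First, in Stage~1 the estimate of Theorem~\ref{main} is only available when $S_c(x)\in\Lambda_{\log}(\M)$; you dismiss this with the word ``dense'', but the inclusion $\H_{q,\g}^c(\M)\subseteq(\H_2^c,\H_\infty^c)_{\theta,\g}$ requires an actual approximation argument (the paper truncates by finite-trace projections $e_j\in\M_1$, checks $y^{(j)}=(y_ne_j)\in\H^c_{\Lambda_{\log}}\cap\H^c_{q,\g}$ via a K\"othe duality estimate, shows $(y^{(j)})$ is Cauchy in the interpolation space, and identifies the limit using the easy inequality). This is routine but not free, and should be carried out.

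Second, and more seriously, your final Wolff application needs the finite-index identity $(\H_1^c(\M),\H_{p_1,\g'}^c(\M))_{\phi,\g}=\H_{p_0,\g}^c(\M)$ with the endpoint $p=1$, and your proposed justification -- complementation of $\H_1^c(\M)$ in $L_1^{\rm ad}(\M;\ell_2^c)$ via L\'epingle--Yor \cite{Qiu1}, combined with a retraction onto the classical identity \eqref{Lp} -- does not reach this endpoint. The retraction argument transfers \eqref{Lp} to the Hardy couple only if the adapted couple itself is complemented in (or at least interpolates like) the full column couple, and Stein's projection fails at $p=1$: as the paper notes explicitly, $\H_1^c(\M)$ is \emph{not} complemented in $L_1(\M;\ell_2^c)$. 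So complementation inside $L_1^{\rm ad}(\M;\ell_2^c)$ alone does not yield the interpolation identity for the couple $(L_1^{\rm ad}(\M;\ell_2^c),L_{p_1}^{\rm ad}(\M;\ell_2^c))$, which is itself a nontrivial fact (it can be obtained from $K$-closedness of adapted couples as in \cite[Proposition~3.19]{Ran-Int2}, or one simply cites the known finite-index interpolation of column Hardy spaces, as the paper does via \cite{Musat-inter}). Once you import that known result, your three-stage bookkeeping collapses to the paper's single application of Lemma~\ref{union}: the family $\{\H_{p,\g}^c\}$ is a scale on $(2,\infty]$ by Stage~1 and on $[1,\infty)$ by the cited result, and the overlap has more than one point. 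With these two repairs your argument is the paper's proof.
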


\begin{proof} We need two steps.

$\bullet$ {\it Step 1.} This concerns  the couple $(\H_2^c(\M), \H_\infty^c(\M))$. Let  $0<\theta<1$ and $1/q=(1-\theta)/2$.  Fix $y \in \big( \H_{2}^c(\M), \H_\infty^c(\M)\big)_{\theta, \g}$.  We verify first that
\begin{equation}\label{left}
\big\| y\big\|_{\H_{q,\g}^c} \lesssim \big\|y \big\|_{( \H_{2}^c(\M), \H_\infty^c(\M))_{\theta, \g} }.
\end{equation}
Indeed, 
since for every symmetric  function space $E$ on $(0,\infty)$, the space $\H_E^c(\M)$  embeds isometrically into $E(\M;\ell_2^c)$  by  the map $x \mapsto (dx_n)_{n\geq 1}$, we have 
\begin{align*}
\big\| y\big\|_{\H_{q,\g}^c} &=\big\| (dy_n) \big\|_{L_{q,\g}(\M;\ell_2^c)}\\
&\approx \big\| (dy_n) \big\|_{(L_{2}(\M;\ell_2^c), L_{\infty}(\M;\ell_2^c))_{\theta,\g}}\\
&\leq \big\| y \big\|_{( \H_{2}^c(\M), \H_\infty^c(\M))_{\theta, \g} }.
\end{align*}

\smallskip

For the reverse inequality, assume first that  $x\in \H_2^c(\M) +\H_\infty^c(\M)$ with $S_c(x)\in \Lambda_{\log}(\M)$.  By Theorem~\ref{main} (see also Remark~\ref{main-reform}), we have for $1\leq \g\leq \infty$,
\begin{align*}
\big\|x\big\|_{(\H_2^c(\M), \H_\infty^c(\M))_{\theta,\g}} &\lesssim \big\|  \mu(S_c(x)) + C^{\ast}[\mu(S_c(x))] \big\|_{(L_2,L_\infty)_{\theta,\g}}\\
&\approx_{q,\g} \big\| \mu(S_c(x)) +C^{\ast}[\mu(S_c(x))] \big\|_{q,\g}\\
&\lesssim_{p,\gamma} \big\| \mu(S_c(x))\big\|_{q,\g} +\big\|C^{\ast}[\mu(S_c(x))] \big\|_{q,\g}.
\end{align*}
The important fact here is that  since $2<q<\infty$,  we have   from  Lemma~\ref{C-bounded} and interpolation that  $C^{\ast}: L_{q,\g} \to L_{q,\g}$ is bounded. Therefore, we obtain further that
\begin{equation}\label{right}
\big\|x\big\|_{(\H_2^c(\M), \H_\infty^c(\M))_{\theta,\g}} \lesssim_{q,\g} \big\|\mu(S_c(x))\big\|_{q,\g}= \big\|x\big\|_{\H_{q,\g}^c}.
\end{equation}

Now we remove the  extra assumption that $S_c(x)\in \Lambda_{\log}(\M)$.
Consider an arbitrary  $y \in \H_{q,\g}^c(\M)$. Since $\M_1$ is semifinite, we  may fix an increasing sequence of projections $\{e_j\}$ in $\M_1$ with  $e_j \uparrow^j {\bf 1} $ and so that  for every $j\geq 1$, $\T(e_j)<\infty$. For $j\geq 1$,  we define the martingale 
\[
 y^{(j)}=  (y_ne_j)_{n\geq 1}.
\]
The sequence of martingales $(y^{(j)})_{j\geq 1}$ satisfies the following properties:
\begin{enumerate}[$\rm (i)$]
\item for every $j\geq 1$, $y^{(j)} \in \H_{\Lambda_{\log}}^c(\M) \cap \H_{q,\g}^c(\M)$;
\item $\lim_{j\to \infty } \big\| y^{(j)}-y\big\|_{\H_{q,\g}^c}=0$.
\end{enumerate}
We verify first that  for every $j\geq 1$, $y^{(j)} \in \H_{q,\g}^c(\M)$. Indeed, one easily sees that $S_c(y^{(j)})= (e_jS_c^2(y) e_j )^{1/2}=| S_c(y)e_j|$. It  then follows that 
\[\|y^{(j)}  \|_{\H_{q,\g}^c} =\|S_c(y^{(j)})\|_{q,\g} \leq \|S_c(y)\|_{q,\g}=\|y\|_{\H_{q,\g}^c} <\infty.
\]
For the $\Lambda_{\log}$-case, we have  from the definition that  for  any given $j\geq 1$,
\begin{align*}
\big\| y^{(j)}\big\|_{\H_{\Lambda_{\log}}^c} &= \int_0^\infty \frac{\mu_t\big(S_c(y)e_j\big)}{1+t}\, dt\\
&=\int_0^{\T(e_j)} \frac{\mu_t\big(S_c(y)e_j\big)}{1+t}\, dt\\
&\leq \int_0^{\T(e_j)} \mu_t\big(S_c(y)\big)\, dt\\
&\leq \big\| y\big\|_{\H_{q,\g}^c}  \big\| \ch_{[0, \T(e_j)]}(\cdot) \big\|_{q',\g'} <\infty
\end{align*}
where the next to last inequality comes from the fact that  $L_{q,\g}$ is the K\"othe dual of $L_{q',\g'}$. This verifies the first item. As a consequence, inequality  \eqref{right} applies to $y^{(j)}$ for every $j\geq 1$. 

The second item follows  at once from $S_c(y^{(j)}-y)=|S_c(y)({\bf 1}-e_j)|$. Now, using these two properties and \eqref{right}, we get that $(y^{(j)})_{j\geq 1}$ is a Cauchy sequence  in  $(\H_2^c(\M), \H_\infty^c(\M))_{\theta,\g}$. On the other hand, \eqref{left} and the second item imply that  the limit of the Cauchy sequence $(y^{(j)})_{j\geq 1}$  in $(\H_2^c(\M), \H_\infty^c(\M))_{\theta,\g}$ must be $y$. 
 Thus,  by taking limits, we may conclude  that:
\begin{align*}
\big\|y\big\|_{(\H_2^c(\M), \H_\infty^c(\M))_{\theta,\g}} &=\lim_{j\to \infty} \big\|y^{(j)}\big\|_{(\H_2^c(\M), \H_\infty^c(\M))_{\theta,\g}}\\
&\lesssim_{q,\g} \lim_{j\to \infty} \big\|y^{(j)}\big\|_{\H_{q,\g}^c}\\
&=\big\|y\big\|_{\H_{q,\g}^c}.
\end{align*}
This shows that \eqref{right} is valid  for any $x \in \H_{q,\g}^c(\M)$ and combining with \eqref{left}, we conclude that
\[
\big(\H_2^c(\M), \H_\infty^c(\M)\big)_{\theta,\g} = \H_{q,\g}^c(\M).
\]

By reiteration, we may also state the slightly more general conclusion  that  if $2<r<\infty$, $0<\upsilon,\g\leq \infty$, $0<\theta<1$, and $1/q=(1-\theta)/r$, then 
\begin{equation*}
\big(\H_{r,\upsilon}^c(\M), \H_\infty^c(\M)\big)_{\theta,\g} = \H_{q,\g}^c(\M).
\end{equation*}

$\bullet$ {\it Step~2.} It is already known  that the result holds if both endpoints  consist  of Hardy spaces with finite indices (\cite{Musat-inter}). Therefore, it suffices to apply Wolff interpolation theorem. More specifically, we use  Lemma~\ref{union}.
For $1\leq \g\leq \infty$, 
set $A_{p,\g} := \H_{p,\g}^c(\M)$ when  $1\leq p<\infty$  and $A_{\infty, \g}:=\H_\infty^c(\M)$. 

By Step~1, 
 the family $\{ A_{p,\g}\}_{p,\g \in [1,\infty]}$ forms a real-interpolation scale on the interval $I=(2,\infty]$.
 On the other hand, from the  finite indices, the family $\{ A_{p,\g}\}_{p,\g \in [1,\infty]}$ forms a real-interpolation scale on the interval $J=[1,\infty)$.  Clearly, $|I\cap J|>1$. By Lemma~\ref{union}, we conclude that  the family $\{ A_{p,\g}\}_{p,\g \in [1,\infty]}$ forms a real-interpolation scale on  $I \cup J=[1,\infty]$.
This completes the proof.
\end{proof}

\begin{remark}\label{rem-general} 
The  argument used in  Step~1 of the proof above  can be easily adapted to provide the following more general statement:
if $E=(L_2, L_\infty)_{\cal{F};K}$ where $\cal{F}$ is a Banach function space with monotone norm and  the operator $C^{\ast}$ is bounded on $E$,  then 
\begin{equation}\label{E2}
\H_E^c(\M)=(\H_2^c(\M), \H_\infty^c(\M))_{\cal{F};K}
\end{equation}
with equivalent norms.
\end{remark}

\subsubsection{Extensions to Hardy spaces associated with general symmetric spaces}

We have the following result for Hardy spaces associated with symmetric spaces.
\begin{theorem}\label{lifting}
Assume that   $E \in {\rm Int}(L_2, L_\infty)$ and $C^*: E \to E$ is bounded. Let
$\cal{G}$ be a Banach function space with monotone norm.
 If $F= (E,L_\infty)_{\cal{G};K}$ and $F$ is $r$-concave for some $r<\infty$,  then 
\[
\H_F^c(\M)= (\H_E^c(\M),\H_\infty^c(\M))_{\cal{G};K}
\]
with equivalent norms.
\end{theorem}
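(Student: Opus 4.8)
The strategy is to transfer the statement, via a reiteration of $K$-methods, to the already established case of the couple $(\H_2^c(\M),\H_\infty^c(\M))$ supplied by Remark~\ref{rem-general}. First, since $E\in{\rm Int}(L_2,L_\infty)$, Proposition~\ref{K-method} furnishes a Banach function space $\mathcal{E}$ with monotone norm such that $E=(L_2,L_\infty)_{\mathcal{E};K}$; as $C^{\ast}$ is bounded on $E$ by hypothesis, Remark~\ref{rem-general} gives
\[
\H_E^c(\M)=\big(\H_2^c(\M),\H_\infty^c(\M)\big)_{\mathcal{E};K}.
\]

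Next I would invoke the reiteration theorem for the $K$-method, which holds with constants independent of the underlying Banach couple thanks to Brudnyi--Krugliak's $K$-divisibility theorem (see \cite{KaltonSMS}). It produces a single Banach function space $\mathcal{F}$ with monotone norm, depending only on $\mathcal{E}$ and $\mathcal{G}$, such that $\big((A_0,A_1)_{\mathcal{E};K},A_1\big)_{\mathcal{G};K}=(A_0,A_1)_{\mathcal{F};K}$ for every Banach couple $(A_0,A_1)$. Applying this to $(L_2,L_\infty)$ yields $F=(E,L_\infty)_{\mathcal{G};K}=(L_2,L_\infty)_{\mathcal{F};K}$; in particular $F\in{\rm Int}(L_2,L_\infty)$. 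Applying it instead to the Banach couple $(\H_2^c(\M),\H_\infty^c(\M))$ and combining with the identity above gives
\[
\big(\H_E^c(\M),\H_\infty^c(\M)\big)_{\mathcal{G};K}=\big(\H_2^c(\M),\H_\infty^c(\M)\big)_{\mathcal{F};K}.
\]

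To finish it suffices to apply Remark~\ref{rem-general} once more, now with $F$ and the parameter $\mathcal{F}$ in place of $E$ and $\mathcal{E}$; this requires $C^{\ast}$ to be bounded on $F$, which is exactly where $r$-concavity enters. Indeed, $F\in{\rm Int}(L_2,L_\infty)$ forces its lower Boyd index to be at least $2$, while $r$-concavity forces the upper Boyd index to be at most $r<\infty$, and for a rearrangement-invariant space with finite upper Boyd index the conjugate Hardy operator $C^{\ast}$ is bounded (Boyd's theorem, see \cite{BENSHA}). Hence Remark~\ref{rem-general} gives $\H_F^c(\M)=(\H_2^c(\M),\H_\infty^c(\M))_{\mathcal{F};K}$, and comparison with the last display yields $\H_F^c(\M)=(\H_E^c(\M),\H_\infty^c(\M))_{\mathcal{G};K}$ with equivalent norms. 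The main obstacle I anticipate is the reiteration step: one must ensure that the reiterated parameter $\mathcal{F}$ is literally the same for the abstract Hardy couple as for $(L_2,L_\infty)$ — this is precisely what $K$-divisibility, with its couple-uniform constant, provides — and that $\mathcal{F}$ still carries a monotone norm and has no endpoint degeneracies, so that Remark~\ref{rem-general} applies verbatim. A minor preliminary point is to record that $(\H_2^c(\M),\H_\infty^c(\M))$ is a genuine compatible Banach couple, both spaces embedding, via $x\mapsto(dx_n)_{n\ge1}$, into the column subspace of $(L_2+L_\infty)(\M\overline{\otimes}\cal{B}(\ell_2))$.
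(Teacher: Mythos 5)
Your overall strategy---two applications of Remark~\ref{rem-general} glued together by a reiteration of $K$-methods, with $r$-concavity entering only through boundedness of $C^{\ast}$ on $F$---is viable and close in substance to the paper's argument, but the pivotal reiteration step is under-justified as you state it. The identity $\big((A_0,A_1)_{\mathcal{E};K},A_1\big)_{\mathcal{G};K}=(A_0,A_1)_{\mathcal{F};K}$ ``for every Banach couple'' is not an off-the-shelf consequence of Brudnyi--Krugliak $K$-divisibility in the form you invoke: the strong reiteration theorem applies to couples of $K$-spaces of $(A_0,A_1)$, and the raw endpoint $A_1$ is a $K$-space of the couple only when it is relatively (Gagliardo) complete; otherwise the corresponding $K$-space is its completion. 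This is harmless for $(L_2,L_\infty)$, but for $(\H_2^c(\M),\H_\infty^c(\M))$ the relative completeness of $\H_\infty^c(\M)$ is nowhere established, so the reiteration identity for the Hardy couple, with literally the same parameter $\mathcal{F}$ as for $(L_2,L_\infty)$, does not follow verbatim from the theorem you cite. The repair is precisely the tool the paper uses: Proposition~\ref{gen-Holm} is stated for an arbitrary quasi-Banach couple with the raw endpoint $A_1$ and expresses $K(\cdot,\rho(t);X,A_1)$ through $K(\cdot,t;A_0,A_1)$ with data depending only on the parameter; applied to both couples with the same parameter (plus the checks that $\rho$ is onto $[0,\infty)$ and that the resulting functional is a legitimate monotone parameter, so that Remark~\ref{rem-general} applies to $F$), it yields the couple-uniform reiteration you want. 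You should also record that $F$ is (equivalent to) a rearrangement-invariant space before invoking Boyd's theorem.

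For comparison, the paper does not identify the reiterated space abstractly at all. It applies Proposition~\ref{gen-Holm} to the couples $(\H_2^c(\M),\H_\infty^c(\M))$ and $(L_2,L_\infty)$ with the common parameter coming from $E$ (via Remark~\ref{rem-general}), feeds in the $K$-functional estimate of Theorem~\ref{main}---which carries the extra term $C^{\ast}[\mu(S_c(x))]$---to obtain the inequality \eqref{gen-K} for the couple $(\H_E^c(\M),\H_\infty^c(\M))$ against $(E,L_\infty)$ evaluated at $\mu(S_c(x))+C^{\ast}[\mu(S_c(x))]$, then takes the $\mathcal{G}$-norm; the extra term is absorbed because $C^{\ast}$ is bounded on $F$ (the paper gets this from $F\in{\rm Int}(L_2,L_r)$ and Lemma~\ref{C-bounded}, where you use Boyd indices---either route is fine), and a final approximation removes the restriction $S_c(x)\in\Lambda_{\log}(\M)$, with the easy reverse inequality coming from the isometric embedding into $F(\M;\ell_2^c)$. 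Your version, once the reiteration step is rerouted through Proposition~\ref{gen-Holm}, is essentially a repackaging of this proof; what it buys is a cleaner modular statement (reiteration plus two uses of Remark~\ref{rem-general}) at the cost of the completeness/parameter technicalities flagged above, which the paper's direct $K$-functional comparison avoids.
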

The proof is based on the following  general reiteration for $K$-functionals.

\begin{proposition}[\cite{Ahmed-Kara-Reza}]\label{gen-Holm}
Let $(A_1,A_0)$ be a couple of quasi-Banach spaces  and $\cal{F}$  be a quasi-Banach function space with monotone quasi-norm. If $X=  (A_0, A_1)_{\cal{F}; K}$  and  $a \in X +A_1$,  then for every  $t>0$,
\[
K(a, \rho(t) ; X, A_1)  \approx I(t,a) +\frac{\rho(t)}{t} K(a, t; A_0, A_1),
\]
where $I(t,a)=\|\ch_{(0,t)}(\cdot) K(a, \cdot \,; A_0, A_1)\|_{\cal F}$ and 
$\rho(t)\approx t\|\ch_{(t,\infty)}(\cdot)\|_{\cal{F}} + \| u\mapsto u\chi_{(0,t)}(u)\|_{\cal{F}}$.
\end{proposition}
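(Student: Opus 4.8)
The plan is to prove Proposition~\ref{gen-Holm} as a function‑parameter version of Holmstedt's classical reiteration formula, treating the two halves of the equivalence separately. The only ingredients needed are: (a) by the very definition of the $K$‑method, $\|b\|_X\approx\|\,s\mapsto K(b,s;A_0,A_1)\,\|_{\cal F}$ for $b\in X$, with constants controlled by the equivalence constant of $X$; (b) the $K$‑functional of $(A_0,A_1)$ is, up to the quasi‑norm constants of $A_0,A_1$, subadditive in its first slot, bounded by $K(b,s)\le\min\{\|b\|_{A_0},s\|b\|_{A_1}\}$, and has $s\mapsto K(b,s)$ non‑decreasing with $s\mapsto K(b,s)/s$ non‑increasing; and (c) $\|\cdot\|_{\cal F}$ is monotone and obeys a quasi‑triangle inequality. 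I write $K(\cdot,s):=K(\cdot,s;A_0,A_1)$ and assume throughout the (implicit, non‑degeneracy) hypothesis $\ch_{(t,\infty)},\ u\ch_{(0,t)}(u)\in\cal F$, which makes $\rho(t)$ finite and is equivalent to $A_0\cap A_1\subseteq X$.

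\emph{Upper bound.} I would fix $t>0$ and pick a near‑optimal decomposition $a=b_0+b_1$ with $b_0\in A_0$, $b_1\in A_1$ and $\|b_0\|_{A_0}+t\|b_1\|_{A_1}\le 2K(a,t)$, so that $\|b_1\|_{A_1}\le 2K(a,t)/t$. Then $b_0=a-b_1$ satisfies $K(b_0,s)\le\|b_0\|_{A_0}\le 2K(a,t)$ for every $s$, while $K(b_0,s)\le K(a,s)+s\|b_1\|_{A_1}\le K(a,s)+2sK(a,t)/t\le 3K(a,s)$ for $0<s\le t$ (using that $K(a,\cdot)/\cdot$ is non‑increasing); hence $K(b_0,s)\le 3\min\{K(a,s),K(a,t)\}$ for all $s>0$. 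Applying $\|\cdot\|_{\cal F}$, the quasi‑triangle inequality, and monotonicity,
\[
\|b_0\|_X\ \lesssim\ \big\|\ch_{(0,t)}(\cdot)K(a,\cdot)\big\|_{\cal F}+K(a,t)\big\|\ch_{(t,\infty)}\big\|_{\cal F}\ \lesssim\ I(t,a)+\tfrac{\rho(t)}{t}K(a,t),
\]
the last step since $\rho(t)\gtrsim t\|\ch_{(t,\infty)}\|_{\cal F}$. As $b_0\in A_0+A_1$ with $K(b_0,\cdot)\in\cal F$, it lies in $X$, whence
\[
K\big(a,\rho(t);X,A_1\big)\le\|b_0\|_X+\rho(t)\|b_1\|_{A_1}\ \lesssim\ I(t,a)+\tfrac{\rho(t)}{t}K(a,t).
\]

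\emph{Lower bound.} For the reverse inequality I would take an arbitrary admissible decomposition $a=a_0+a_1$ with $a_0\in X$, $a_1\in A_1$, and bound $\|a_0\|_X+\rho(t)\|a_1\|_{A_1}$ below by a fixed multiple of $I(t,a)+\tfrac{\rho(t)}{t}K(a,t)$, then take the infimum. Since $K(a_0,s)\ge K(a_0,t)$ for $s\ge t$ and $K(a_0,s)\ge \tfrac{s}{t}K(a_0,t)$ for $s<t$, monotonicity of $\|\cdot\|_{\cal F}$ gives
\[
\|a_0\|_X\ \gtrsim\ \big\|K(a_0,\cdot)\big\|_{\cal F}\ \gtrsim\ K(a_0,t)\Big(\big\|\ch_{(t,\infty)}\big\|_{\cal F}+\tfrac1t\big\|u\ch_{(0,t)}(u)\big\|_{\cal F}\Big)\ \approx\ \tfrac{\rho(t)}{t}K(a_0,t),
\]
and combining with $K(a,t)\le K(a_0,t)+K(a_1,t)\le K(a_0,t)+t\|a_1\|_{A_1}$ yields $\tfrac{\rho(t)}{t}K(a,t)\lesssim\|a_0\|_X+\rho(t)\|a_1\|_{A_1}$. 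For the other term, on $(0,t)$ one has $K(a,s)\le K(a_0,s)+s\|a_1\|_{A_1}$, and — crucially keeping the weight $s$ rather than majorizing it by $t$ — applying $\|\cdot\|_{\cal F}$ and the quasi‑triangle inequality gives
\[
I(t,a)\ \lesssim\ \big\|\ch_{(0,t)}(\cdot)K(a_0,\cdot)\big\|_{\cal F}+\|a_1\|_{A_1}\big\|u\ch_{(0,t)}(u)\big\|_{\cal F}\ \lesssim\ \|a_0\|_X+\rho(t)\|a_1\|_{A_1}.
\]
Adding the two estimates and taking the infimum over decompositions finishes the proof.

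\emph{Main obstacle.} Conceptually this is Holmstedt's argument with a function parameter in place of a power weight, so the content is modest and the care is essentially bookkeeping. The one genuine trap is the lower estimate of $I(t,a)$: replacing $s\|a_1\|_{A_1}$ by $t\|a_1\|_{A_1}$ on $(0,t)$ would produce the factor $t\|\ch_{(0,t)}\|_{\cal F}$, which in general is not dominated by $\rho(t)$, so the weight $s$ must be carried inside the $\cal F$‑norm. Beyond that, one only has to track the quasi‑triangle constants of $A_0,A_1,\cal F$ and the $K$‑method constant of $X$ to see that all implied constants depend only on the data, and to keep the standing non‑degeneracy ($\rho(t)<\infty$, i.e.\ $A_0\cap A_1\subseteq X$) under which the identity is non‑vacuous.
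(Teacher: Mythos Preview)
The paper does not supply its own proof of Proposition~\ref{gen-Holm}; it is stated with a reference to \cite{Ahmed-Kara-Reza} and used as a black box in the proof of Theorem~\ref{lifting}. So there is nothing in the paper to compare your argument against directly.

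That said, your argument is sound and is precisely the function-parameter Holmstedt computation one would expect. Both directions go through: in the upper bound you correctly exploit $K(b_0,s)\lesssim\min\{K(a,s),K(a,t)\}$ to place $b_0$ in $X$ with the right norm estimate, and in the lower bound your observation that $K(a_0,\cdot)$ pointwise dominates $K(a_0,t)\big(\ch_{(t,\infty)}+\tfrac{\cdot}{t}\ch_{(0,t)}\big)$, together with disjointness of supports and monotonicity of $\|\cdot\|_{\cal F}$, gives the needed $\|a_0\|_X\gtrsim\tfrac{\rho(t)}{t}K(a_0,t)$. Your cautionary remark about keeping the weight $s$ (not $t$) inside the $\cal F$-norm when bounding $I(t,a)$ is exactly the point where a careless argument would fail, and you handle it correctly. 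The quasi-Banach constants are tracked adequately, and your standing non-degeneracy assumption $\rho(t)<\infty$ is the natural one under which the formula is meaningful. This is, in substance, the argument of the cited reference.
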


\begin{proof}[Proof of Theorem~\ref{lifting}]
We begin by verifying that  Theorem~\ref{main} extends to the  present situation. 
That is, if $x\in \H_E^c(\M) +\H_\infty^c(\M)$ is such that $S_c(x) \in \Lambda_{\log}(\M)$, then for every $s>0$, the following holds:
\begin{equation}\label{gen-K}
K\big(x, s; \H_E^c(\M),\H_\infty^c(\M)\big) \leq C_{abs} K\big( \mu(S_c(x)) +C^{\ast}[\mu(S_c(x))],s; E, L_\infty\big). 
\end{equation}
By assumption, we have from Remark~\ref{rem-general} that $\H_E^c(\M)=\big(\H_2^c(\M), \H_\infty^c(\M)\big)_{\cal{F}; K}$.  Therefore by Proposition~\ref{gen-Holm}, we have  for $t>0$, 
\[
K(x, \rho(t) ; \H_E^c(\M), \H_\infty^c(\M))  \approx I(t,x) +\frac{\rho(t)}{t} K(x, t; \H_2^c(\M), \H_\infty^c(\M))
\]
where  $I(t,x)=\|\ch_{(0,t)}(\cdot) K(x, \cdot \,; \H_2^c(\M), \H_\infty^c(\M))\|_{\cal F}$. It then follows from Theorem~\ref{main} and the monotonicity of $\|\cdot\|_{\cal{F}}$ that:
\[
I(t,x)  \lesssim \|\ch_{(0,t)}(\cdot) K(\mu(S_c(x))+C^{\ast}[\mu(S_c(x))], \cdot \,; L_2, L_\infty)\|_{\cal F}:=\widehat{I}(t, \mu(S_c(x)) +C^{\ast}[\mu(S_c(x))]).
\] 
We can then deduce that 
\begin{align*}
K(x, \rho(t) &; \H_E^c(\M), \H_\infty^c(\M)) \\
&\lesssim \widehat{I}(t,  \mu(S_c(x))+C^{\ast}[\mu(S_c(x))]) +\frac{\rho(t)}{t} K( \mu(S_c(x))+C^{\ast}[\mu(S_c(x))], t; L_2, L_\infty)\\
&\approx K(\mu(S_c(x))+C^{\ast}[\mu(S_c(x))], \rho(t); E, L_\infty).
\end{align*}
One can  verify as in the proof of \cite[Theorem~3.13]{Ran-Int2} that  the range of $\rho(\cdot)$ is $[0,\infty)$ which  proves \eqref{gen-K}. 

In turn,  the estimate \eqref{gen-K}  implies that if $x \in \H_E^c(\M)  + \H_\infty^c(\M)$ is such that $S_c(x)\in \Lambda_{\log}(\M)$, then 
\begin{align*}
\big\| x\big\|_{(\H_E^c(\M),\H_\infty^c(\M))_{\cal{G}; K}} &\lesssim \big\|\mu(S_c(x)) + C^{\ast}[\mu(S_c(x))]\big\|_{(E,L_\infty)_{\cal{G}; K}}\\
&\approx \big\| \mu(S_c(x))+C^{\ast}[ \mu(S_c(x)]\big\|_F.
\end{align*}
Next, since $F$ is $r$-concave, we have $F\in {\rm Int}(L_2, L_r)$. It follows from Lemma~\ref{C-bounded} and interpolation that $C^{\ast}$ is a  bounded   operator on $F$.  This implies further that 
\[
\big\| x\big\|_{(\H_E^c(\M),\H_\infty^c(\M))_{\cal{G}; K}}\leq C_E \big\| Id+C^{\ast}: F \to F\big\|. \big\|x \big\|_{\H_F^c}.
\]
As above, the extra assumption that $x\in \H_{\Lambda_{\log}}^c(\M)$ can  be removed by approximations  for which we omit the details.
\end{proof}

\smallskip

Our argument  above  is  clearly handicapped  by  the fact that we only have estimate  on  the $K$-functional  for the couple $(\H_2^c(\M),\H_\infty^c(\M))$.   We suspect that this extra assumption is not necessary.  We leave  as an open problem that  Theorem~\ref{lifting} can be improved  to    cover all  spaces  $E\in {\rm Int}(L_1,L_\infty)$.

\smallskip

As an illustration, we treat the case of  martingale Orlicz-Hardy spaces. In this special situation, the restriction in Theorem~\ref{lifting} is not needed.
We start from recalling that   at the level of function spaces,  the following result holds:
\begin{proposition}[{\cite[Proposition~3.3]{L-T-Zhou}}]\label{function}
Let $\Phi$ be an Orlicz function, $1\leq\g\leq \infty$, and  $0<\theta<1$. If  $\Phi_0^{-1}(t) =[\Phi^{-1}(t)]^{1-\theta}$, then 
\[
(L_\Phi, L_\infty)_{\theta, \g} =L_{\Phi_0,\g}.
\]
\end{proposition}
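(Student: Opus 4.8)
The plan is to reduce the identity to an elementary computation with the $K$-functional of the couple $(L_\Phi,L_\infty)(0,\infty)$, exploiting the fact that when the right endpoint is $L_\infty$ the optimal decomposition is a truncation (``capping'') of $f$.

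First I would unwind the hypothesis in terms of fundamental functions. Writing $\varphi_X(s)=\|\chi_{(0,s)}\|_X$, one has $\varphi_{L_\Phi}(s)=1/\Phi^{-1}(1/s)$ and $\varphi_{L_{\Phi_0}}(s)=1/\Phi_0^{-1}(1/s)$, so the relation $\Phi_0^{-1}(t)=[\Phi^{-1}(t)]^{1-\theta}$ says precisely that $\varphi_{L_{\Phi_0}}=(\varphi_{L_\Phi})^{1-\theta}$; equivalently $\|\chi_A\|_{L_{\Phi_0}}=\|\chi_A\|_{L_\Phi}^{\,1-\theta}$ for every measurable $A$. Hence $\|\chi_{\{|f|>\tau\}}\|_{L_{\Phi_0}}=\varphi_{L_\Phi}(\lambda_\tau(f))^{1-\theta}$ and the target norm reads $\|f\|_{\Phi_0,\gamma}^\gamma=\gamma\int_0^\infty\bigl(\tau\,\varphi_{L_\Phi}(\lambda_\tau(f))^{1-\theta}\bigr)^\gamma\,\tfrac{d\tau}{\tau}$ (with the obvious supremum when $\gamma=\infty$), where $\lambda_\tau(f)$ is the distribution function of $f$.

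Next, the $K$-functional. Since the right endpoint is $L_\infty$, truncating $|f|$ at level $c$ is optimal, which yields the exact identity $K(f,t;L_\Phi,L_\infty)=\inf_{c>0}\bigl(\|(|f|-c)_+\|_{L_\Phi}+tc\bigr)$, and in turn the Holmstedt-type equivalence $K(f,t;L_\Phi,L_\infty)\approx\|\mu(f)\chi_{(0,\,\varphi_{L_\Phi}^{-1}(t))}\|_{L_\Phi}$ (cf.\ \cite{Holm} for $\Phi(u)=u^p$; the general Orlicz case is proved the same way, the implied constant depending only on $\Phi$). I would then insert this into the definition of the real interpolation norm and change variables by $t=\varphi_{L_\Phi}(a)$, obtaining, for $\gamma<\infty$, the equivalence $\|f\|_{(L_\Phi,L_\infty)_{\theta,\gamma}}^\gamma\approx\int_0^\infty\varphi_{L_\Phi}(a)^{-\theta\gamma}\,\|\mu(f)\chi_{(0,a)}\|_{L_\Phi}^\gamma\,d\bigl(\log\varphi_{L_\Phi}(a)\bigr)$.

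It then remains to see that this integral is equivalent to $\int_0^\infty\bigl(\mu_a(f)\,\varphi_{L_\Phi}(a)^{1-\theta}\bigr)^\gamma\,d\bigl(\log\varphi_{L_\Phi}(a)\bigr)$, which after an integration by parts and the substitution $\tau=\mu_a(f)$ is exactly $\|f\|_{\Phi_0,\gamma}^\gamma$ up to a constant depending on $\theta,\gamma$. The lower bound $\|\mu(f)\chi_{(0,a)}\|_{L_\Phi}\ge\mu_a(f)\varphi_{L_\Phi}(a)$ is pointwise and trivial; for the upper bound I would write the decreasing function $\mu(f)\chi_{(0,a)}$ as a superposition of indicators to get $\|\mu(f)\chi_{(0,a)}\|_{L_\Phi}\le\int_0^a\mu_s(f)\,d\varphi_{L_\Phi}(s)+\mu_a(f)\varphi_{L_\Phi}(a)\le 2\int_0^a\mu_s(f)\varphi_{L_\Phi}(s)\,d\bigl(\log\varphi_{L_\Phi}(s)\bigr)$, and then invoke a weighted Hardy inequality with the exponential weight $\varphi_{L_\Phi}(a)^{-\theta\gamma}$; after the substitution $u=\log\varphi_{L_\Phi}(a)$ this is nothing but the boundedness of a Ces\`aro-type averaging operator of the kind recorded in Lemma~\ref{C-bounded}. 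The case $\gamma=\infty$ is analogous, the Hardy step collapsing to the identity $\int_0^a\varphi_{L_\Phi}(s)^{\theta}\,d\bigl(\log\varphi_{L_\Phi}(s)\bigr)=\theta^{-1}\varphi_{L_\Phi}(a)^{\theta}$. The parts I expect to be the main obstacle are making the Holmstedt-type $K$-functional formula precise for a general Orlicz function (the truncation argument is routine, but one must check that the constant is admissible) and the measure-theoretic bookkeeping in the change of variables and integration by parts — the behaviour of $\varphi_{L_\Phi}$ and of $\mu(f)$ at $0$ and $\infty$, possible flat pieces of the rearrangement, and the usual density step passing from finitely supported $f$ to the general case.
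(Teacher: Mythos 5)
The paper offers no proof of this proposition at all: it is quoted from \cite[Proposition~3.3]{L-T-Zhou} and used as a black box, so your argument is necessarily a different route, namely a self-contained $K$-functional computation. The outline is sound. The truncation identity $K(f,t;L_\Phi,L_\infty)=\inf_{c>0}\bigl(\|(|f|-c)_+\|_\Phi+tc\bigr)$ is exact for any symmetric quasi-norm, and the Holmstedt-type estimate $K(f,t;L_\Phi,L_\infty)\approx\big\|\mu(f)\chi_{(0,\varphi^{-1}(t))}\big\|_\Phi$ with $\varphi(s)=\|\chi_{(0,s)}\|_\Phi=1/\Phi^{-1}(1/s)$ follows from it with constants depending only on the quasi-triangle constant of $L_\Phi$ (lower bound: $\mu_s(f)\le\mu_s(g)+\|h\|_\infty$ on $(0,a)$; upper bound: cap at $c=\mu_a(f)$), so that part is genuinely routine, as you suspect. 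The change of variables $t=\varphi(a)$, the exponential-weight Hardy inequality (Minkowski after substituting $u=\log\varphi(a)$, giving a constant $\approx\theta^{-1}$), and the layer-cake identification of $\int_0^\infty\bigl(\mu_a(f)\varphi(a)^{1-\theta}\bigr)^\gamma\,d\bigl(\log\varphi(a)\bigr)$ with $\|f\|_{\Phi_0,\gamma}^\gamma$ are also standard. The one step that deserves more care than you give it is the bound $\|\mu(f)\chi_{(0,a)}\|_\Phi\le\int_0^a\mu_s(f)\,d\varphi(s)+\mu_a(f)\varphi(a)$: this is the embedding $\Lambda_\varphi\subseteq L_\Phi$ and rests on the integral triangle inequality, hence on $\Phi$ being convex, whereas the Orlicz functions of this paper are in general only $p$-convex with possibly $p<1$ and $\|\cdot\|_\Phi$ a quasi-norm; in that case one should run the identical argument on the $p$-convexification (replace $\|\cdot\|_\Phi$ and $\varphi$ by their $p$-th powers and adjust the exponent in the Hardy step), a routine fix worth stating. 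Since every use of the proposition in the paper (Theorems~\ref{Orlicz} and~\ref{Orlicz-2}) has $\Phi$ convex and $q$-concave, your proof as written already covers what is actually needed; the citation buys the author brevity and the precise hypotheses of \cite{L-T-Zhou}, while your computation buys a transparent, elementary argument in the spirit of \cite{Holm} and of the $K$-functional estimate of Theorem~\ref{main}.
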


By reiteration,    we also deduce  the following:   assume that   $0< \theta, \eta<1$ and $1\leq \lambda,  \gamma \leq \infty$. Set   $\Psi_1$ and $\Psi_2$ 
such that $\Psi_1^{-1}(t) =[\Phi^{-1}(t)]^{1-\theta}$ and $\Psi_2^{-1}(t) =[\Phi^{-1}(t)]^{1-\theta\eta}$.  Then
\[
\big( L_{\Phi}, L_{\Psi_1, \lambda}\big)_{\eta, \g} =L_{\Psi_2,\g}.
\]
Next, we recall  that  $\H_p^c(\M)$ is complemented in $L_p^{\rm ad}(\M;\ell_2^c)$ for $1\leq p<\infty$. Since the above identity transfers to the corresponding spaces of adapted sequences (see \cite{Ran-Int2}), 
 it follows that  if $\Phi$ and $\Psi_1$ are convex  Orlicz functions that are also  $q$-concave for some $1\leq q<\infty$, then  the following holds:
\begin{equation}\label{iteration}
\big( \H_{\Phi}^c(\M), \H_{\Psi_1, \lambda}^c(\M)\big)_{\eta, \g} =\H_{\Psi_2,\g}^c(\M).
\end{equation}
We may view this as the Orlicz extension of having finite indices.

\smallskip

The  aim of the next  result  is to show that  as in conditioned case,  the full    equivalence  in  Proposition~\ref{function} transfers to  martingale Hardy spaces. This provides an extension of Theoorem~\ref{main-interpolation} to the case of martingale  Orlicz-Hardy spaces.

 \begin{theorem}\label{Orlicz}
 Let $0<\theta <1$ and  $1\leq  \g \leq \infty$. If $\Phi$ is a convex  Orlicz function that is  $q$-concave for $1\leq q<\infty$,  then   for  $\Phi_0^{-1}(t) =[\Phi^{-1}(t)]^{1-\theta}$, the following  holds:
 \[
\big( \H_{\Phi}^c(\M), \H_\infty^c(\M)\big)_{\theta, \g} =\H_{\Phi_0,\g}^c(\M).
\]
\end{theorem}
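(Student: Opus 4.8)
\textbf{Plan for the proof of Theorem~\ref{Orlicz}.}
The goal is to realize $\H_{\Phi_0,\g}^c(\M)$ as a real interpolation space of the couple $(\H_\Phi^c(\M),\H_\infty^c(\M))$, and the natural route is the same two-stage strategy used for Theorem~\ref{main-interpolation}: first establish the conclusion for a couple of the form $(\H_E^c(\M),\H_\infty^c(\M))$ with $E$ on the ``good'' side of the scale (here $E=L_2$, or more generally a $q$-concave Orlicz space sitting inside $L_2$-type behavior), and then run Wolff's interpolation theorem (via Lemma~\ref{union}) to close the gap down to the $\H_\Phi^c$ endpoint using the already-established Orlicz reiteration \eqref{iteration}. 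The first stage is exactly the content of Theorem~\ref{lifting} once one checks its hypotheses: take $E=L_2$, $\cal{G}$ the function space with monotone norm witnessing the real interpolation $(L_2,L_\infty)_{\cal{G};K}$ that produces $L_{\Phi_1,\g}$ (where $\Phi_1$ is chosen so that the endpoint-$2$ version of Proposition~\ref{function} applies, i.e. $\Phi_1^{-1}(t)=[\Phi_{00}^{-1}(t)]^{?}$ — one picks the concave function realizing $L_{\Phi_0,\g}$ as a $K$-space over $(L_2,L_\infty)$), and $F=L_{\Phi_0,\g}$. The key verifications are that $F=L_{\Phi_0,\g}\in{\rm Int}(L_2,L_\infty)$ (clear since $\Phi_0$ is again $q$-concave and, being built from a convex $\Phi$ via $\Phi_0^{-1}=[\Phi^{-1}]^{1-\theta}$, is $2$-convex up to equivalence so that $L_{\Phi_0,\g}$ is $r$-concave for some $r<\infty$ and sits between $L_2$ and $L_r$), and that $C^*\colon F\to F$ is bounded — which follows from Lemma~\ref{C-bounded} together with $F\in{\rm Int}(L_2,L_r)$ exactly as in the proof of Theorem~\ref{lifting}. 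Granting this, Theorem~\ref{lifting} yields
\[
\big(\H_{L_2}^c(\M),\H_\infty^c(\M)\big)_{\cal{G};K}=\H_{\Phi_0,\g}^c(\M),
\]
i.e. the desired identity with the left endpoint $\H_\Phi^c$ replaced by $\H_2^c$ and the parameter $(\theta,\g)$ replaced by the corresponding $\cal{G}$-data.

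The second stage converts this into the statement about the couple $(\H_\Phi^c(\M),\H_\infty^c(\M))$. I would set up a family of spaces indexed so that Lemma~\ref{union} applies: let $A_{\Psi,\g}$ run over the Orlicz--Lorentz Hardy spaces $\H_{\Psi,\g}^c(\M)$ for convex $q$-concave Orlicz functions $\Psi$ lying ``between'' $\Phi$ and $L_2$ along the one-parameter scale $\Psi_s^{-1}(t)=[\Phi^{-1}(t)]^{s}$, together with the endpoint $\H_\infty^c(\M)$. Identity \eqref{iteration} says this family forms a real interpolation scale on the ``interior'' interval $J$ (the parameter range corresponding to finite Orlicz indices, with $\H_\Phi^c$ itself as one endpoint of that interval), while the first-stage conclusion — extended by reiteration exactly as in Step~1 of Theorem~\ref{main-interpolation} — says the family forms a real interpolation scale on an interval $I$ whose left endpoint is $L_2$-type and whose right endpoint is $\H_\infty^c$. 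Since these two intervals overlap in more than one point (both contain a whole subinterval of genuinely ``interior'' Orlicz spaces between $L_2$ and the $\Phi$-level), Lemma~\ref{union} gives a real interpolation scale on $I\cup J$, which contains both $\H_\Phi^c(\M)$ and $\H_\infty^c(\M)$. Reading off the interpolation identity for that particular pair and parameter $(\theta,\g)$, and using Proposition~\ref{function} to identify the resulting Orlicz function as $\Phi_0$ with $\Phi_0^{-1}(t)=[\Phi^{-1}(t)]^{1-\theta}$, yields the theorem.

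The main obstacle, as in the proof of Theorem~\ref{lifting}, is not conceptual but bookkeeping: one must correctly match the abstract $K$-space data $\cal{G}$ to the concrete Orlicz--Lorentz parameters so that Proposition~\ref{function} (and its reiterated form) applies at every step, and one must confirm that the interval $I$ produced by the first stage genuinely overlaps the ``finite indices'' interval $J$ in more than a point. The overlap is where some care is needed: $I$ is obtained by reiterating the $\H_2^c$-version, so its natural description is in terms of Orlicz functions $\Psi$ with $\Psi^{-1}=[\Phi^{-1}]^{s}$ for $s$ in some subinterval of $(0,1)$, and one needs the endpoint arrangement so that this subinterval and the ``all finite $q$-concave convex Orlicz'' regime genuinely share a nondegenerate arc — this is automatic once one notes that for every $s\in(0,1)$ the Orlicz function $\Psi_s$ is again convex and $q$-concave, hence lies in both regimes, so $I\cap J$ contains an open subinterval. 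The remaining routine points — the $r$-concavity of $F=L_{\Phi_0,\g}$, the $2$-convexity of $\Phi_0$ inherited from convexity of $\Phi$, the boundedness of $C^*$ on $F$ via Lemma~\ref{C-bounded}, and the approximation argument removing the auxiliary hypothesis $S_c(x)\in\Lambda_{\log}(\M)$ — go through verbatim as in the proofs of Theorems~\ref{main-interpolation} and \ref{lifting}, so I would simply cite those arguments rather than repeat them.
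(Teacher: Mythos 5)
Your overall architecture is the right one and matches the paper's: use Theorem~\ref{lifting} together with Proposition~\ref{function} on a restricted range, then close the gap to the $\H_\Phi^c$ endpoint with a Wolff-type argument using \eqref{iteration}. But your Stage~1 contains a genuine error. You take $E=L_2$ and $F=L_{\Phi_0,\g}$, and you justify $F\in{\rm Int}(L_2,L_\infty)$ by asserting that $\Phi_0$ is ``$2$-convex up to equivalence'' because $\Phi$ is convex. This is false whenever $\theta\le 1/2$: convexity of $\Phi$ only gives that $\Phi_0$ is $(1-\theta)^{-1}$-convex, and $(1-\theta)^{-1}\le 2$ in that range. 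Concretely, for $\Phi(t)=t$ and $\theta=1/4$ one gets $\Phi_0(t)=t^{4/3}$, and $L_{4/3,\g}$ is not an interpolation space of $(L_2,L_\infty)$; so neither the $K$-space description of $L_{\Phi_0,\g}$ over $(L_2,L_\infty)$ nor the application of Theorem~\ref{lifting} with these data is available. This is exactly the obstruction that forces the paper to split the argument: Theorem~\ref{lifting} is applied with $E=L_\Phi$ itself (Case~1, requiring $\Phi$ to be $p$-convex with $p>2$, so that $L_\Phi\in{\rm Int}(L_2,L_q)$ and $F=(L_\Phi,L_\infty)_{\theta,\g}=L_{\Phi_0,\g}$ via Proposition~\ref{function}), then the general convex $\Phi$ is reached for $1/2<\theta<1$ by one application of Wolff's theorem with an auxiliary $\Phi_1$, $\Phi_1^{-1}=[\Phi^{-1}]^{1-\psi}$, $1/2<\psi<\theta$, sitting strictly between $\Phi$ and $\Phi_0$ (Case~2), and finally the range $0<\theta\le 1/2$ is handled by a second Wolff application in which Case~2 is applied to $\Phi_0$ itself to manufacture a space $\H^c_{\widetilde\Phi,\lambda}$ beyond $\Phi_0$ (Case~3). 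Your plan has no mechanism for $\theta\le1/2$ once the $2$-convexity claim fails.

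Your Stage~2 (a real interpolation scale along $\Psi_s^{-1}=[\Phi^{-1}]^{s}$ plus Lemma~\ref{union}) could in principle replace the paper's explicit two-step Wolff bootstrap and would even treat all $\theta$ uniformly, but only if the scale property on the interval containing the $\infty$ endpoint is supplied by the Case~1-type identity $(\H^c_{\Psi_{s_0},\lambda},\H_\infty^c)_{\eta,\g}=\H^c_{\Psi_s,\g}$ for every $s_0<1/2$ (i.e.\ Theorem~\ref{lifting} applied with $E=L_{\Psi_{s_0}}$, which is $p$-convex with $p=1/s_0>2$), not by the $(\H_2^c,\H_\infty^c)$ identity you propose: $\H_2^c$ is not a member of your Orlicz family, so Lemma~\ref{union} cannot splice it in, and Lemma~\ref{union} itself needs the routine re-indexing $A_{1/s,\g}:=\H^c_{\Psi_s,\g}$ which you leave implicit. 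Once Stage~1 is corrected to the $E=L_{\Psi_{s_0}}$ ($s_0<1/2$) form, your Stage~2 becomes a viable, and arguably cleaner, alternative to the paper's Cases~2--3; as written, however, the proof does not go through.
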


\begin{proof}  We will consider three cases.

$\bullet$ Case~1. Assume first that $\Phi$ is $p$-convex and $q$-concave for $2<p\leq q<\infty$.  In this case, $L_\Phi \in {\rm Int}(L_2, L_q)$ and  the statement follows immediately from Theorem~\ref{lifting} and Proposition~\ref{function}. In fact, we have the following  slightly more general statement:
\[
\big( \H_{\Phi,\lambda}^c(\M), \H_\infty^c(\M)\big)_{\theta, \g} =\H_{\Phi_0,\g}^c(\M).
\]

\smallskip

$\bullet$  Case~2.  Assume that  $1/2<\theta <1$.  One can easily see that $\Phi_0$ is $(1-\theta)^{-1}$-convex and $q(1-\theta)^{-1}$-concave. Moreover, $(1-\theta)^{-1}>2$.
Fix $1/2<\psi<\theta$ and define $\Phi_1$ so  that 
\[
\Phi_1^{-1}(t) =[\Phi^{-1}(t)]^{1-\psi}, \quad t>0.
\]
We note that $\Phi_1$ is $(1-\psi)^{-1}$-convex  with $(1-\psi)^{-1}>2$ and  $q(1-\psi)^{-1}$-concave. Moreover,  $\Phi_0^{-1}(t)=[ \Phi_1^{-1}(t)]^{1-\theta_0}$ for $\displaystyle{1-\theta_0=\frac{1-\theta}{1-\psi}}$.  Applying Case~1 to  $\Phi_1$, we have 
\begin{equation*}
\big(\H_{\Phi_1,\lambda}^c(\M), \H_\infty^c(\M)\big)_{\theta_0, \gamma} =\H_{\Phi_0,\gamma}^c(\M).
\end{equation*}
On the other hand, we also have from \eqref{iteration} that
\begin{equation*}
\big(\H_{\Phi}^c(\M), \H_{\Phi_0, \gamma}^c(\M)\big)_{\theta_1, \lambda} =\H_{\Phi_1,\lambda}^c(\M)
\end{equation*}
where $\theta_1= \psi/\theta$.  By Wolff's interpolation theorem stated in Theorem~\ref{W} with $B_1=\H_{\Phi}^c(\M)$, $B_2=\H_{\Phi_1,\lambda}^c(\M)$,
$B_3=\H_{\Phi_0,\g}^c(\M)$, and $B_4=\H_{\infty}^c(\M)$,  we concude that  
\begin{equation*}
\big(\H_{\Phi}^c(\M), \H_\infty^c(\M)\big)_{\xi, \lambda} =\H_{\Phi_0,\lambda}^c(\M)
\end{equation*}
where $\displaystyle{\xi=\frac{\theta_0}{1-\theta_1 +\theta_1\theta_0}}$. A simple calculation shows that $\xi=\theta$.

\smallskip

$\bullet$ Case~3.  Assume that $0<\theta\leq 1/2$. Fix $\psi$ so that $0<1-\psi<\theta \leq 1/2$. 
Set $\widetilde{\Phi}$ satisfying   $\widetilde{\Phi}^{-1}(t) = [\Phi_0^{-1}(t)]^{1-\psi}$ for $t>0$. Note that  $1/2<\psi<1$.   Using Case~2 with $\Phi_0$ in place of $\Phi$ and $\widetilde{\Phi}$ in place of $\Phi_0$, we get
\begin{equation*}
\big(\H_{\Phi_0,\lambda}^c(\M), \H_\infty^c(\M)\big)_{\psi, \lambda} =\H_{\widetilde{\Phi},\lambda}^c(\M).
\end{equation*}
Next, since  for  every $t>0$,  $\widetilde{\Phi}(t)^{-1}= [ \Phi^{-1}(t)]^{(1-\theta)(1-\psi)}=[ \Phi^{-1}(t)]^{(1-\tilde{\theta})}$, it follows  \eqref{iteration} that
\begin{equation*}
\big(\H_{\Phi}^c(\M), \H_{\widetilde{\Phi},\lambda}^c(\M)\big)_{\eta, \lambda} =\H_{\Phi_0,\lambda}^c(\M)
\end{equation*}
where $\displaystyle{\eta= \frac{\theta}{\tilde{\theta}}=\frac{\theta}{\theta+\psi-\psi\theta}}$. We use  Wolff's interpolation theorem with $B_1=\H_{\Phi}^c(\M)$, $B_2=\H_{\Phi_0,\lambda}^c(\M)$,  $B_3=\H_{\widetilde{\Phi},\lambda}^c(\M)$, and $B_4=\H_{\infty}^c(\M)$ to conclude that 
\begin{equation*}
\big(\H_{\Phi}^c(\M), \H_\infty^c(\M)\big)_{\upsilon, \lambda} =\H_{\Phi_0,\lambda}^c(\M)
\end{equation*}
where 
$\displaystyle{\upsilon= \frac{\eta\psi}{1-\eta+\eta\psi}}$. One can easily verify  that $\upsilon=\theta$. The proof is complete.
\end{proof}

We conclude   this section with the  corresponding result for  BMO-spaces. This may be viewed as an Orlicz generalization of  the real interpolation form of Musat's result (\cite{Musat-inter}). To the best of our knowledge, the only available result in the literature is for classical martingale Hardy spaces associated with regular filtration  
(see \cite[Corollary~4.9]{L-Weisz-Xie}).  The proof outlined below is based on  interpolation of spaces of adapted sequences.  

 \begin{theorem}\label{Orlicz-2}
 Let $0<\theta <1$ and  $1\leq  \g \leq \infty$. If $\Phi$ is a convex  Orlicz function that is  $q$-concave for $1\leq q<\infty$,  then   for  $\Phi_0^{-1}(t) =[\Phi^{-1}(t)]^{1-\theta}$, the following  holds:
 \[
\big( \H_{\Phi}^c(\M), \BMO^c(\M)\big)_{\theta, \g} =\H_{\Phi_0,\g}^c(\M).
\]
\end{theorem}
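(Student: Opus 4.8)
The plan is to imitate the three--case Wolff argument used for Theorem~\ref{Orlicz}, replacing the endpoint $\H_\infty^c(\M)$ by $\BMO^c(\M)$ throughout and substituting the real interpolation form of Musat's theorem \cite{Musat-inter} for the ``finite index'' input. First I would record the power--scale version at the $\BMO^c$ endpoint: from Musat's complex interpolation identity together with Theorem~\ref{main-interpolation} and Lemma~\ref{union} one obtains, for $1\le p_0<\infty$, $0<\lambda\le\infty$, $0<\eta<1$ and $1/p=(1-\eta)/p_0$,
\[
\big(\H_{p_0,\lambda}^c(\M),\ \BMO^c(\M)\big)_{\eta,\g}=\H_{p,\g}^c(\M),
\]
which is the exact counterpart of Step~2 in the proof of Theorem~\ref{main-interpolation}. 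I would keep in force the Orlicz ``reiteration among finite indices'' identity \eqref{iteration}, which is internal to the Orlicz--Hardy scale and is unaffected by the choice of top endpoint.

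The substantive new point is the base case, i.e.\ the analogue of Case~1 of the proof of Theorem~\ref{Orlicz}: if $\Psi$ is $p$--convex and $q$--concave with $2<p\le q<\infty$ and $\Psi_0^{-1}(t)=[\Psi^{-1}(t)]^{1-\theta}$, then $\big(\H_{\Psi,\lambda}^c(\M),\BMO^c(\M)\big)_{\theta,\g}=\H_{\Psi_0,\g}^c(\M)$ for every secondary index $\lambda$. Here Theorem~\ref{lifting} only delivers the statement with $\H_\infty^c(\M)$ in place of $\BMO^c(\M)$, and this is where the ``interpolation of spaces of adapted sequences'' enters: one uses that $\H_r^c(\M)$ is, for every $1\le r<\infty$, a complemented subspace of $L_r^{\rm ad}(\M;\ell_2^c)$ via one and the same martingale--difference projection, and that $L_\Psi\in{\rm Int}(L_2,L_q)$ has finite concavity, so by Proposition~\ref{K-method} one has $L_\Psi=(L_2,L_q)_{\cal{F};K}$ for a suitable Banach function space $\cal{F}$ with monotone norm. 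This representation transfers to the adapted--sequence spaces and hence, by the complementation, to $\H_\Psi^c(\M)=(\H_2^c(\M),\H_q^c(\M))_{\cal{F};K}$. Since $\H_2^c(\M)$ and $\H_q^c(\M)$ are both interpolation spaces of a couple $(\H_{p_0}^c(\M),\BMO^c(\M))$ by Musat's theorem, a reiteration via Proposition~\ref{gen-Holm} together with the power--BMO identities above identifies $\big(\H_{\Psi}^c(\M),\BMO^c(\M)\big)_{\theta,\g}$ as a $K$--method interpolation space of that couple, and Proposition~\ref{function} pins down the corresponding function parameter as $L_{\Psi_0,\g}$, giving $\H_{\Psi_0,\g}^c(\M)$. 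The extra hypothesis $S_c(x)\in\Lambda_{\log}(\M)$ that surfaces in the transfer step is removed by the same truncation/approximation used in Step~1 of the proof of Theorem~\ref{main-interpolation}, so I would only sketch it.

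With the base case available, Cases~2 and~3 go verbatim as in the proof of Theorem~\ref{Orlicz}: for $1/2<\theta<1$ one picks $1/2<\psi<\theta$, introduces the intermediate Orlicz function $\Phi_1$ with $\Phi_1^{-1}(t)=[\Phi^{-1}(t)]^{1-\psi}$ (so that $\Phi_1$ is $(1-\psi)^{-1}$--convex with $(1-\psi)^{-1}>2$), applies the base case to $\Phi_1$ and the Orlicz reiteration \eqref{iteration} to $\big(\H_{\Phi}^c(\M),\H_{\Phi_0,\g}^c(\M)\big)_{\theta_1,\lambda}$, and then invokes Wolff's theorem (Theorem~\ref{W}) with $B_1=\H_{\Phi}^c(\M)$, $B_2=\H_{\Phi_1,\lambda}^c(\M)$, $B_3=\H_{\Phi_0,\g}^c(\M)$, $B_4=\BMO^c(\M)$; the parameter check $\xi=\theta$ is unchanged. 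The case $0<\theta\le 1/2$ is reduced to the case $\theta>1/2$ by the same device with an auxiliary $\widetilde{\Phi}$, again with $B_4=\BMO^c(\M)$, yielding $\upsilon=\theta$. The main obstacle is precisely the base case: it cannot be produced from Wolff's theorem fed only with power--index identities and \eqref{iteration}, since each such identity either keeps one inside the power scale or requires a genuine Orlicz function on both sides, so one really needs the adapted--sequence/complementation machinery (or, alternatively, an analogue of Theorem~\ref{lifting} with the $\BMO^c$ endpoint, which would amount to showing that $\BMO^c(\M)$ is itself given by a $K$--method over $(\H_2^c(\M),\H_\infty^c(\M))$). Checking that the $K$--method representation of $L_\Psi$ transfers to the adapted sequences and that the complementing projection is compatible with the $\BMO^c$ endpoint is the part requiring the most care.
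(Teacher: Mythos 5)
Your reduction of the general case to a base case via \eqref{iteration} and Wolff's theorem (your Cases 2--3 with $B_4=\BMO^c(\M)$) is fine and is essentially what the paper does at the end of its proof, and the representation $\H_\Psi^c(\M)=(\H_2^c(\M),\H_q^c(\M))_{\cal{F};K}$ obtained from Proposition~\ref{K-method}, adapted sequences and Stein complementation is unobjectionable. The genuine gap is the base case itself. Proposition~\ref{gen-Holm} only re-expresses $K(\cdot\,;X,A_1)$ when $X=(A_0,A_1)_{\cal{F};K}$ and the \emph{second endpoint of the original couple is kept}; to invoke it you would first have to represent $\H_\Psi^c(\M)$ as a $K$-method space over a couple whose upper endpoint is $\BMO^c(\M)$, and then --- the real sticking point --- identify the resulting general (Orlicz-type) $K$-method space of $(\H_{p_0}^c(\M),\BMO^c(\M))$ with $\H_{\Psi_0,\g}^c(\M)$. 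The power-scale identities $(\H_{p_0,\lambda}^c(\M),\BMO^c(\M))_{\eta,\g}=\H_{p,\g}^c(\M)$ determine only the $(\eta,\g)$-spaces of that couple; they do not determine its general $K$-method spaces, and no analogue of Theorem~\ref{main} (an upper $K$-functional estimate) is available for any couple having $\BMO^c(\M)$ or $\H_\infty^c(\M)$ as upper endpoint --- the paper explicitly leaves open even the $K$-closedness question for $(\H_2^c(\M),\H_\infty^c(\M))$. So the sentence ``Proposition~\ref{function} pins down the corresponding function parameter as $L_{\Psi_0,\g}$, giving $\H_{\Psi_0,\g}^c$'' silently assumes exactly the transfer from the function-space level to the $\BMO^c$-couple that the theorem asserts; as written, the base case is circular. (Your parenthetical alternative does not help either: $\BMO^c(\M)$ contains $\H_\infty^c(\M)$, so it is not an intermediate space of the couple $(\H_2^c(\M),\H_\infty^c(\M))$ and cannot be ``given by a $K$-method'' over it.)

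The ingredient you are missing is duality, which is how the paper reaches the $\BMO^c$ endpoint. For $\Phi$ $p$-convex and $q$-concave with $1<p\le q<\infty$, it works with the complementary function $\Phi^*$ and the predual couple $(\H_{\Phi^*}^c(\M),\H_1^c(\M))$: since $L_{\Phi^*}\in{\rm Int}(L_{p'},L_1)$ is given by a $K$-method (Proposition~\ref{K-method}) and the adapted couple $(L_{p'}^{\rm ad}(\M;\ell_2^c),L_1^{\rm ad}(\M;\ell_2^c))$ is $K$-closed in $(L_{p'}(\M;\ell_2^c),L_1(\M;\ell_2^c))$ by \cite[Proposition~3.19]{Ran-Int2}, complementation yields $(\H_{\Phi^*}^c(\M),\H_1^c(\M))_{\theta,\g'}=\H_E^c(\M)$ with $E=(L_{\Phi^*},L_1)_{\theta,\g'}$; the duality theorem \cite[Theorem~3.7.1]{BL} together with $\H_{\Phi^*}^c(\M)=(\H_\Phi^c(\M))^*$ and $(\H_1^c(\M))^*=\BMO^c(\M)$ then gives $(\H_\Phi^c(\M),\BMO^c(\M))_{\theta,\g}=\H_{E^*}^c(\M)=\H_{\Phi_0,\g}^c(\M)$, and the general convex case follows by your Case-2-type Wolff reduction (without the restriction $\theta>1/2$). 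To salvage your outline you would need an ingredient of comparable strength --- either this duality/$K$-closedness route (note the $K$-closedness used is for couples with $L_1$ endpoint, which is why one dualizes rather than working directly at $\BMO^c$), or a genuine $K$-functional estimate for a couple with $\BMO^c$ upper endpoint, which is currently unavailable.
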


\begin{proof}[Sketch  of  the proof]
Assume first that $\Phi$ is $p$-convex and $q$-concave for $1<p<q<\infty$. Denote by $\Phi^*$  (resp. $\Phi_0^*$) the Orlicz function complementary to  the convex function $\Phi$ (resp. $\Phi_0$). Then $\Phi^*$ is $q'$-convex and $p'$-concave where $p'$ and $q'$ denote the conjugate indices of $p$ and $q$ respectively. 
In this case, $L_{\Phi^*} \in {\rm Int}(L_{q'}, L_{p'})$. A fortiori,  $L_{\Phi^*} \in {\rm Int}(L_{p'}, L_{1})$. Let $\cal{F}$ be a Banach function space with monotone norm so that $L_{\Phi^*}=(L_{p'}, L_1)_{\cal{F}; K}$.  The existence of such $\cal{F}$ is given   by Proposition~\ref{K-method}.  It follows that  $L_{\Phi^*}(\M;\ell_2^c)=(L_{p'}(\M;\ell_2^c), L_1(\M;\ell_2^c))_{\cal{F}; K}$. Similarly, $L_{\Phi^*}^{\rm ad}(\M;\ell_2^c)=(L_{p'}^{\rm ad}(\M;\ell_2^c), L_1^{\rm ad}(\M;\ell_2^c))_{\cal{F}; K}$.

By Proposition~\ref{gen-Holm}, one can express the $K$-functionals of the couple $(L_{\Phi^*}(\M;\ell_2^c), L_1(\M;\ell_2^c))$  (resp.  $L_{\Phi^*}^{\rm ad}(\M;\ell_2^c), L_1^{\rm ad}(\M;\ell_2^c))$)  in terms of those  of the couple  $(L_{p'}(\M;\ell_2^c), L_1(\M;\ell_2^c))$ (resp. $(L_{p'}^{\rm ad}(\M;\ell_2^c), L_1^{\rm ad}(\M;\ell_2^c))$).

The important fact here is that  the couple $(L_{p'}^{\rm ad}(\M;\ell_2^c), L_1^{\rm ad}(\M;\ell_2^c))$ is  $K$-closed in the  couple  $(L_{p'}(\M;\ell_2^c), L_1(\M;\ell_2^c))$ (see \cite[Proposition~3.19]{Ran-Int2}). 
Therefore, we also have that   the couple $(L_{\Phi^*}^{\rm ad}(\M;\ell_2^c), L_1^{\rm ad}(\M;\ell_2^c))$ is  $K$-closed in the  larger couple  $(L_{\Phi^*}(\M;\ell_2^c), L_1(\M;\ell_2^c))$.

From $K$-closedness, we deduce that if $(L_{\Phi^*}, L_1)_{\theta, \g'}=E$,  then 
\[
(L_{\Phi^*}^{\rm ad}(\M;\ell_2^c), L_1^{\rm ad}(\M;\ell_2^c))_{\theta,\g'}=E^{\rm ad}(\M;\ell_2^c).
\]
By complementation, we obtain further that 
\[
(\H_{\Phi^*}^c(\M), \H_1^c(\M))_{\theta, \g'}= \H_E^c(\M).
\]
With the facts that  $\H_{\Phi^*}^c(\M)= (\H_{\Phi}^c(\M))^*$,  $\H_{E^*}^c(\M)=(\H_{E}^c(\M))^*$, and $(\H_{1}^c(\M) )^*=\BMO^c(\M)$, we may apply duality   (see \cite[Theorem~3.7.1]{BL}) to deduce  that 
\[
(\H_{\Phi}^c(\M), \BMO^c(\M))_{\theta, \g}= \H_{E^*}^c(\M).
\]
Observe that $E^*=(L_{\Phi}, L_\infty)_{\theta, \g}= L_{\Phi_0,\g}$.
 This proves the theorem for  the case $\Phi$ being $p$-convex  with $p>1$.
 
  For general convex function $\Phi$ as stated, one can repeat  the argument used in  Case~2 of the proof of Theorem~\ref{Orlicz}  (but without the restriction $1/2<\theta<1$).  We leave the details to the  interested reader.
\end{proof}

\section{Concluding remarks}
By taking adjoints, all results from  the previous section are valid for  noncommutative martingale row Hardy spaces.

Recall that  the mixed martingale Hardy spaces are defined as follows: 
for $E\in {\rm Int}(L_p, L_2)$ 
with  $0<p<2$, 
\[
\H_E(\M)=\H_E^c(\M) + \H_E^r(\M)
\]
while for $F\in {\rm Int}(L_2, L_q)$ 
with  $2<q \leq \infty$, 
\[
\H_F(\M)=\H_F^c(\M) \cap \H_F^r(\M).\]

Using similar argument  as in the  proof of \cite[Theorem~4.5]{Bekjan-Chen-Perrin-Y}, we may also deduce  the corresponding  interpolation result for  mixed Hardy spaces which reads as follows:

\begin{theorem} If $0<\theta <1$, $1/p=1-\theta$, and  $1\leq \g \leq \infty$, then
\[
\big( \H_{1}(\M), \H_\infty(\M)\big)_{\theta, \g} = \H_{p,\g}(\M)
\]
with equivalent norms.
\end{theorem}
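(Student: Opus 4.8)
The plan is to reduce the statement, via Wolff's interpolation theorem, to the single identity
\[
\big(\H_2(\M),\H_\infty(\M)\big)_{\theta,\g}=\H_{q,\g}(\M),\qquad 2<q<\infty,\ 1/q=(1-\theta)/2,
\]
and then to feed this into the column/row versions already at our disposal (Theorem~\ref{main-interpolation} and, by taking adjoints, its row analogue). At finite indices the corresponding statement $\big(\H_{p_0}(\M),\H_{p_1}(\M)\big)_{\theta,\g}=\H_{p,\g}(\M)$ for $1\le p_0<p_1<\infty$ is already available: for $1\le p<\infty$ the martingale projection $(a_k)_k\mapsto(a_k-\E_{k-1}a_k)_k$ realizes $\H_p(\M)$ as a complemented subspace of the space of adapted sequences in the row/column space $L_p(\M;\ell_2^{rc})$ (Stein's inequality for $1<p<\infty$, the noncommutative L\'epingle--Yor inequality for $p=1$), and $\big(L_{p_0}(\M;\ell_2^{rc}),L_{p_1}(\M;\ell_2^{rc})\big)_{\theta,\g}=L_{p,\g}(\M;\ell_2^{rc})$; see \cite{Musat-inter} and the proof of Theorem~\ref{Orlicz-2}. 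Thus, writing $A_{p,\g}:=\H_{p,\g}(\M)$ for $1\le p<\infty$ and $A_{\infty,\g}:=\H_\infty(\M)$, the family $\{A_{p,\g}\}$ forms a real interpolation scale on $J=[1,\infty)$; once it is shown to form one on $I=(2,\infty]$ as well, then $|I\cap J|>1$ and Lemma~\ref{union} gives a scale on $[1,\infty]$, and the case $1/p=1-\theta$ with endpoints $1$ and $\infty$ is exactly the assertion. The scale on $(2,\infty]$ follows from the displayed identity by the reiteration used in Step~1 of the proof of Theorem~\ref{main-interpolation}.

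The inclusion ``$\subseteq$'' of the displayed identity is immediate. Since martingale differences are $L_2$-orthogonal, $\H_2(\M)=\H_2^c(\M)=\H_2^r(\M)$ with equal norms, while $\H_\infty(\M)=\H_\infty^c(\M)\cap\H_\infty^r(\M)$ embeds contractively into $\H_\infty^c(\M)$; monotonicity of the $K$-functional together with Step~1 of Theorem~\ref{main-interpolation} gives $\big(\H_2(\M),\H_\infty(\M)\big)_{\theta,\g}\hookrightarrow\big(\H_2^c(\M),\H_\infty^c(\M)\big)_{\theta,\g}=\H_{q,\g}^c(\M)$, and symmetrically into $\H_{q,\g}^r(\M)$, hence into $\H_{q,\g}^c(\M)\cap\H_{q,\g}^r(\M)=\H_{q,\g}(\M)$ (recall $q>2$).

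For the reverse inclusion the essential ingredient is a \emph{symmetric} version of Theorem~\ref{main}: for every $x$ with $S_c(x),S_r(x)\in\Lambda_{\log}(\M)$ and every $t>0$,
\[
K\big(x,t;\H_2(\M),\H_\infty(\M)\big)\le C_{abs}\,K\big(\mu(S_c(x))+\mu(S_r(x))+C^{\ast}[\mu(S_c(x))]+C^{\ast}[\mu(S_r(x))],\ t;\ L_2,L_\infty\big).
\]
I would run the two-step Cuculescu construction of Theorem~\ref{main} with the following changes: apply Cuculescu's construction with parameter $\lambda^2$ first to the submartingale $\big(S_{c,k}^2(x)+S_{r,k}^2(x)\big)_k$ (producing $q=\bigwedge_k q_k$) and then to $\big(S_{c,k}^2(\b)+S_{r,k}^2(\b)\big)_k$ (producing $\pi=\bigwedge_k\pi_k$); take the stopped difference sequence two-sided, $\a_k=q_k\,dx_k\,q_k$ and $d\b_k=\a_k-\E_{k-1}\a_k$; and define the ``small'' martingale by two-sided truncation as well, $z=\sum_k\pi_{k-1}\,d\b_k\,\pi_{k-1}$ (still a martingale, as $\pi_{k-1}\in\M_{k-1}$), $y=x-z$. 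The two-sided truncation by $q_k$ gives $\|\a_k\|_\infty\le\lambda$ and hence $\|d\b_k\|_\infty\le2\lambda$; the two-sided multiplication by $\pi_{k-1}$ in $z$, combined with the commutation and telescoping of the second Cuculescu family applied to $S_{c,k}^2(\b)+S_{r,k}^2(\b)$ and with $|d\b_k|^2,|d\b_k^\ast|^2\le S_{c,k}^2(\b)+S_{r,k}^2(\b)-S_{c,k-1}^2(\b)-S_{r,k-1}^2(\b)$, yields both $S_c^2(z)\lesssim\lambda^2\mathbf1$ and $S_r^2(z)\lesssim\lambda^2\mathbf1$, i.e. $\|z\|_{\H_\infty(\M)}\lesssim\lambda$. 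Writing $dy_k=\big(dx_k-\pi_{k-1}dx_k\pi_{k-1}\big)+\pi_{k-1}\big(dx_k-q_k\,dx_k\,q_k\big)\pi_{k-1}+\pi_{k-1}\E_{k-1}(q_k\,dx_k\,q_k)\pi_{k-1}$ and using $\E_{k-1}(q_k\,dx_k\,q_k)=-\E_{k-1}[q_k\,dx_k\,(q_{k-1}-q_k)]-\E_{k-1}[(q_{k-1}-q_k)\,dx_k\,q_{k-1}]$, one bounds $\|dy_k\|_2$ by $L_2$-norms of terms each of which is multiplied (on one side) by $\mathbf1-\pi$ or by $\mathbf1-q$, so that $\|y\|_{\H_2(\M)}^2=\|y\|_{\H_2^c}^2\lesssim\T[(\mathbf1-\pi)(S_c^2(x)+S_r^2(x))]+\T[(\mathbf1-q)(S_c^2(x)+S_r^2(x))]$. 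The remaining point is the trace bound $\T(\mathbf1-q),\T(\mathbf1-\pi)\le t^2$: as in Lemma~\ref{trace}, this is obtained by choosing $\lambda$ proportional to $t^{-1}$ times the square root of $\int_0^{t^2}\big(\mu_u(S_c^2(x))+\mu_u(S_r^2(x))\big)\,du+\kappa\int_0^{t^2}\big((C^{\ast}[\mu(S_c(x))](u))^2+(C^{\ast}[\mu(S_r(x))](u))^2\big)\,du$, the Cesàro terms entering through the symmetric analogue of Lemma~\ref{lemma1}(iv), which in turn comes from Proposition~\ref{distribution} and Theorem~\ref{extrapolation} applied to the mutually disjoint family $\{q_{k-1}-q_k\}$ and to the operators $S_c^2(x)$ and $S_r^2(x)$ in turn (the ``$\gamma$-sequence'' device of Lemma~\ref{lemma1} being used to verify the required $\Lambda_{\log}$-membership). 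This gives $\|y\|_{\H_2(\M)}^2\lesssim\int_0^{t^2}\big(\mu_u(S_c^2(x))+\mu_u(S_r^2(x))\big)\,du$; combining with $\|z\|_{\H_\infty(\M)}\lesssim\lambda$ and the description of $K(\cdot,t;L_2,L_\infty)$ from \cite{Holm} yields the displayed estimate. Interpolating it exactly as in Step~1 of Theorem~\ref{main-interpolation}, using that $C^{\ast}:L_{q,\g}\to L_{q,\g}$ is bounded for $q>2$ (Lemma~\ref{C-bounded} and interpolation), gives $\|x\|_{(\H_2(\M),\H_\infty(\M))_{\theta,\g}}\lesssim\|S_c(x)\|_{q,\g}+\|S_r(x)\|_{q,\g}\approx\|x\|_{\H_{q,\g}(\M)}$, first under the $\Lambda_{\log}$-assumption and then for all $x\in\H_{q,\g}(\M)$ by the two-sided truncation $x^{(j)}=(e_jx_ne_j)_n$ with $e_j\uparrow\mathbf1$ in $\M_1$, $\T(e_j)<\infty$, as in the proof of Theorem~\ref{main-interpolation}.

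I expect the third paragraph to be the main obstacle: one must check that the two-sided truncations, whose $q_k$- and $\pi_k$-families need not commute with one another, still support the telescoping behind the $\H_\infty(\M)$-bound for $z$ and behind the trace bounds $\T(\mathbf1-q),\T(\mathbf1-\pi)\le t^2$, and that the extra diagonal corrections produced by the non-predictability of $q_k$ (there are more of them than in Theorem~\ref{main}) remain controlled by $\mathbf1-q$ and $\mathbf1-\pi$ after one application of the dual Doob–type machinery of Proposition~\ref{distribution} and Theorem~\ref{extrapolation}. The rest is bookkeeping parallel to the proofs of Theorem~\ref{main} and Theorem~\ref{main-interpolation}, and this is precisely the ``similar argument as in \cite[Theorem~4.5]{Bekjan-Chen-Perrin-Y}'' that reduces the mixed couple to its column and row constituents.
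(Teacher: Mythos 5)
Your route is genuinely different from the paper's, and as written it is not complete. The paper obtains this theorem softly: it transfers the already-proved column result (Theorem~\ref{main-interpolation}) and its row analogue (obtained by taking adjoints) to the mixed couple by the sum/intersection argument of \cite[Theorem~4.5]{Bekjan-Chen-Perrin-Y}; no new Cuculescu analysis is performed. Your closing sentence misreads that citation: the ``similar argument'' is precisely this soft transfer from the column/row scales to $\H_E^c+\H_E^r$ and $\H_F^c\cap\H_F^r$, not a symmetric re-run of Theorem~\ref{main}. Your first two paragraphs (Wolff reduction to the couple $(\H_2(\M),\H_\infty(\M))$ and the easy inclusion into $\H_{q,\g}^c(\M)\cap\H_{q,\g}^r(\M)$) are fine, although your justification of the finite-index mixed scale via complementation in an adapted ``$\ell_2^{rc}$'' sequence space is not a statement that applies to the sum space $\H_p^c(\M)+\H_p^r(\M)$ for $p\le 2$; for $1<p<\infty$ one should instead invoke Burkholder--Gundy ($\H_p(\M)\simeq L_p(\M)$), with the endpoint $p=1$ handled by the known results quoted in Step~2 of Theorem~\ref{main-interpolation} in their mixed form.

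The genuine gap is the third paragraph, which you yourself flag as unverified, and the specific point at which it fails as stated is the symmetric analogue of Lemma~\ref{lemma1}(iv). With the two-sided truncation $\a_k=q_k\,dx_k\,q_k$, the correction $\E_{k-1}(\a_k)$ contains the term $\E_{k-1}[(q_{k-1}-q_k)\,dx_k\,q_{k-1}]$, and Kadison's inequality bounds its square by $\E_{k-1}\big[q_{k-1}dx_k^*(q_{k-1}-q_k)dx_k q_{k-1}\big]$, where the disjoint projection sits in the \emph{middle} of $dx_k^*\,\cdot\,dx_k$. Summed over $k$, this is not of the form $\sum_k\E_{k-1}(e_k a e_k)$ for any single positive operator $a$, so Proposition~\ref{distribution} does not apply to it as you claim (``applied to $\{q_{k-1}-q_k\}$ and to $S_c^2(x)$, $S_r^2(x)$ in turn''); passing to adjoints merely moves the same middle-projection obstruction between the column and row square functions of $\b$, since the other correction term $\E_{k-1}[q_k\,dx_k\,(q_{k-1}-q_k)]$ produces the mirror-image problem for $S_r^2(\b)$. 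Because Lemma~\ref{trace} and the choice of $\lambda$ in \eqref{lambda} rest entirely on that submajorization, the trace bounds $\T({\bf 1}-q),\T({\bf 1}-\pi)\le t^2$ and hence the whole $K$-functional estimate for $(\H_2(\M),\H_\infty(\M))$ are not established. The defect looks repairable --- for instance one can drop the middle projection, bound the resulting term by the full conditioned square function $\sum_k\E_{k-1}(|dx_k|^2)$, and control that by $\big(C^*[\mu(S_c(x))]\big)^2$ via Theorem~\ref{extrapolation} applied to the operator $T$ from the proof of Proposition~\ref{distribution} acting on $\sum_k dx_k\otimes e_{k,1}$ --- but no such argument appears in your proposal, and without it the central estimate is missing. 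Given that the paper's own deduction is a short reduction to the column/row case, the heavy symmetric construction is also unnecessary for the stated theorem.
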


Motivated by  results from the previous section and the Musat's  result on the complex interpolation of $(\H_1^c(\M), \BMO^c(\M))$,  a natural  direction of interest   is the complex interpolation method for the couple $(\H_1^c(\M),\H_\infty^c(\M))$ (or  the couple $(\H_1(\M),\H_\infty(\M))$). For a given compatible couple of Banach spaces  $(A_0,A_1)$ and $0<\theta<1$, let $[A_0,A_1]_\theta$ denote the complex interpolation space of exponent $\theta$ as defined in \cite{BL}. The following question  remains unresolved.
\begin{problem}
Assume that $1<\theta<1$ and $1/p=1-\theta$. 
Does one have $[\H_1^c(\M), \H_\infty^c(\M)]_\theta=\H_p^c(\M)$?
\end{problem}
We should point out  that the corresponding problem for the conditioned case is still open (see \cite[Problem~5]{Bekjan-Chen-Perrin-Y}).
\bigskip 

 \noindent{\bf Acknowledgments.} I am very grateful to  the  anonymous referee  for a careful reading of the paper and  for  providing valuable suggestions that  improved the presentation of the paper.


\def\cprime{$'$}
\providecommand{\bysame}{\leavevmode\hbox to3em{\hrulefill}\thinspace}
\providecommand{\MR}{\relax\ifhmode\unskip\space\fi MR }
\providecommand{\MRhref}[2]{%
  \href{http://www.ams.org/mathscinet-getitem?mr=#1}{#2}
}
\providecommand{\href}[2]{#2}

\end{document}